\theoremstyle{plain}
\newtheorem{thm}{Theorem}
\newtheorem{lem}{Lemma}[section]
\newtheorem{prop}[lem]{Proposition}
\newtheorem{cor}[lem]{Corollary}
\theoremstyle{definition}
\newtheorem*{defn}{Definition}
\newtheorem*{rem}{Remark}
\newtheorem*{rems}{Remarks}
\newtheorem*{ex}{Example}
\newtheorem*{exs}{Examples}
\let\ssection=\section
\renewcommand{\section}{\setcounter{equation}{0}\ssection}
\newcommand{\R}{\mathbb{R}}
\newcommand{\Z}{\mathbb{Z}}
\newcommand{\C}{\mathbb{C}}
\newcommand{\bP}{\mathbb{P}}
\newcommand{\K}{\mathbb{K}}
\newcommand{\RP}{{\mathbb{RP}}}
\newcommand{\CP}{{\mathbb{CP}}}
\newcommand{\DE}{\mathcal{E}}
\newcommand{\cI}{\mathcal{I}}
\newcommand{\cK}{\mathcal{K}}
\newcommand{\cL}{\mathcal{L}}
\newcommand{\cM}{\mathcal{M}}
\newcommand{\W}{\mathcal{W}}
\newcommand{\pf}{\mathrm{pf}}
\newcommand{\Id}{\mathrm{Id}}
\newcommand{\SL}{\mathrm{SL}}
\newcommand{\PSL}{\mathrm{PSL}}
\newcommand{\Sp}{\mathrm{Sp}}
\newcommand{\half}{\frac{1}{2}}
\def\e{\varepsilon}
\def\om{\omega}
\def\l{\lambda}
\def\GCD{\mathop{\rm GCD}\nolimits}
\def\Lc{Lagrangian configuration}
\def\mid{\mathop{\rm mid}\nolimits}
\def\mod{\mathop{\rm mod}\nolimits}
\def\ndup{\mathop{\rm nd}\nolimits}
\def\ocL{\overline \cL}
\def\scr{symplectic cross-ratio}
\def\sgn{\mathop{\rm sgn}\nolimits}
\def\SLHS{L}
\def\stup{\mathop{\rm st}\nolimits}
\def\thup{\mathop{\rm th}\nolimits}
\begin{document}

\title[Lagrangian configurations]{Lagrangian configurations and symplectic cross-ratios}

\author{Charles H.\ Conley}
\address{
Charles H.\ Conley,
Department of Mathematics 
\\University of North Texas 
\\Denton TX 76203, USA} 
\email{conley@unt.edu}

\author{Valentin Ovsienko}
\address{
Valentin Ovsienko,
CNRS,
Laboratoire de Math\'ematiques 
U.F.R. Sciences Exactes et Naturelles 
Moulin de la Housse - BP 1039 
51687 REIMS cedex 2,
France}
\email{valentin.ovsienko@univ-reims.fr}



\begin{abstract}
We consider moduli spaces of cyclic configurations of $N$ lines
in a $2n$-dimensional symplectic vector space, such that
every set of $n$ consecutive lines generates a Lagrangian subspace.
We study geometric and combinatorial problems
related to these moduli spaces,
and prove that they are isomorphic to quotients of spaces
of symmetric linear difference operators with monodromy~$-1$.

The symplectic cross-ratio is an invariant of two pairs
of $1$-dimensional subspaces of a symplectic vector space.
For $N = 2n+2$, the moduli space of \Lc s
is parametrized by $n+1$ symplectic cross-ratios.
These cross-ratios satisfy a single remarkable relation,
related to tridiagonal determinants and continuants,
given by the Pfaffian of a Gram matrix.
\end{abstract}

\maketitle

\thispagestyle{empty}

\tableofcontents

\section{Introduction}

Throughout this article, $\K$ will denote either $\R$ or $\C$,
$\{e_1, \ldots, e_n,\ f_1, \ldots, f_n\}$ will be
the standard basis of $\K^{2n}$, and $\omega$ will be
the standard symplectic form on $\K^{2n}$: for $1 \le i,j \le n$,
\begin{equation*}
   \omega(e_i, e_j) = 0, \qquad
   \omega(e_i, f_j) = \delta_{ij}, \qquad
   \omega(f_i, f_j) = 0.
\end{equation*}
We define the symplectic group $\Sp(2n, \K)$ with respect to $\omega$.
Our topic of study is configurations of 1-dimensional subspaces
of $\K^{2n}$ modulo the action of $\Sp(2n, \K)$.

\begin{defn}
For $N \ge 2n$, define an {\it $(n, N)$-Lagrangian configuration\/} over $\K$
to be a cyclically ordered $N$-tuple $(X_1, \ldots, X_N)$ of 
lines through the origin in the symplectic space $\K^{2n}$
with the following properties
(by cyclically ordered, we mean that the indices are read modulo~$N$):

\begin{enumerate}
\item[(i)]
Every $n$ consecutive lines span a Lagrangian subspace:
$\langle X_{i+1}, \ldots, X_{i+n} \rangle$ is Lagrangian for all~$i$.

\smallbreak \item[(ii)]
Every $2n$ consecutive lines span the entire symplectic space:
$\langle X_{i+1}, \ldots, X_{i+2n} \rangle = \K^{2n}$ for all~$i$.

\end{enumerate}
\Lc s in the same orbit of $\Sp(2n, \K)$ are said to be {\em equivalent.}
\end{defn}

We have formulated our results in the symplectic setting,
but we may just as well speak of {\em Legendrian configurations\/}
of points in the contact projective space $\K\bP^{2n-1}$.
Let us mention that Legendrian configurations in $\R\bP^3$
may be viewed as discrete analogs of {\it Legendrian knots.\/}
In another direction, one may consider
cyclically ordered $N$-tuples of Lagrangian subspaces
$(L_1, L_2, \ldots, L_N)$ in $\R^{2n}$
such that every two consecutive subspaces
$L_i$ and $L_{i+1}$ are ``maximally non-transversal''.
These configurations are in some sense dual
to Lagrangian configurations, and the {\it Maslov index\/}
may be applied to study them; see~\cite{Arn}.
Continuous versions were treated in~\cite{Ovs}.

Suppose that an arbitrary cyclically ordered $N$-tuple
$(X_1, \ldots, X_N)$ of lines through the origin in $\K^{2n}$
has Property~(i) above.
We will see in Lemma~\ref{ii from i} that then
it has Property~(ii) if and only if the subspace
$\langle X_i, X_{i+n} \rangle$ is not isotropic for any~$i$.
It turns out that $\Sp(2n, \K)$ does not act freely
on all Lagrangian configurations, but for $N > 2n$,
it does act freely on configurations in which $\langle X_i, X_j \rangle$
is not isotropic except when forced to be so by Property~(i);
see Proposition~\ref{freely}.  We refer to such configurations as generic:

\begin{defn}
An $(n,N)$-Lagrangian configuration $\left( X_1,\ldots,X_N\right)$
is {\em generic\/} if $\langle X_i, X_j \rangle$ is non-isotropic
whenever the $N$-cyclic distance between~$i$ and~$j$
is at least~$n$, that is, $i-j$ is not congruent to any of
$0, \pm 1, \pm 2, \ldots, \pm (n-1)$ modulo~$N$.
We denote the $\Sp(2n, \K)$-moduli space of generic
$(n, N)$-configurations over $\K$ by $\cL_{n, N}(\K)$.
\end{defn}

The space $\cL_{n, N}(\K)$ is the main object of our study.
We will see that $\Sp(2n, \K)$ acts freely on generic configurations,
implying that $\cL_{n, N}(\K)$ is a variety of dimension $n(N - 2n - 1)$,
and moreover, $\cL_{n, N}(\R)$ and $\cL_{n, N}(\C)$ are smooth
real and complex manifolds of this dimension, respectively.
We construct a collection of {\it symplectic cross-ratios\/} which are
$\Sp(2n, \K)$-invariants of Lagrangian configurations over~$\K$,
and in some cases show that these cross-ratios 
form a coordinate ring on $\cL_{n, N}(\K)$.
They satisfy certain relations, which we calculate explicitly
as Pfaffians for $N = 2n + 2$, the simplest non-trivial case.
These Pfaffians are closely related to the classical determinants
of continued fractions known as {\it continuants;\/} see \cite{CoOv}.

Observe that for $n = 1$ the Lagrangian condition
is vacuous, so $\cL_{1, N}(\K)$ is essentially the
classical moduli space $\cM_{0,N}$
of configurations of $N$ points on the projective line.
We regard $\cL_{n,N}(\K)$
as a multi-dimensional symplectic variant of $\cM_{0,N}$.
The only previously studied configurations in symplectic space we know of
are triangles and skew lines; see \cite{Yag} and Section~2.8 of \cite{OS}.
To the best of our knowledge, Lagrangian configurations have not been
considered before.  We believe that they deserve further study;
in particular, it would be interesting to investigate the topology of $\cL_{n,N}(\K)$.
It seems plausible that the topological invariants of Legendrian knots,
for instance, the Maslov class and the Bennequin invariant,
as well as more general invariants, can be expressed
in terms of cross-ratios of a Lagrangian configuration.

Relations to dynamical systems also seem promising.
Moduli spaces of cyclic configurations of points in $\RP^n$
(without any Legendrian condition)
carry a family of discrete integrable systems,
including the pentagram map and its generalizations;
see \cite{Glo, GP, KS, OST, OST1, Sol}.
We believe that $\cL_{n,N}(\K)$ also supports
interesting discrete dynamical systems.

\subsection{Example: hexagons in $\K\bP^3$} \label{Hexagons}

Our main geometric result is a description of
$\cL_{n, 2n+2}$.  The simplest case, $\cL_{1, 4}(\K)$,
is the moduli space of quadrilaterals in $\K\bP^1$.
It has been known since ancient times to be
1-dimensional and parametrized by the classical cross-ratio.
Therefore the first new case is $\cL_{2, 6}(\K)$,
the $\Sp(4, \K)$-moduli space of generic
$(2, 6)$-\Lc s, i.e., Legendrian hexagons in $\K\bP^3$.

Given any $(2, 6)$-configuration, choosing a non-zero point
on each of the six lines gives a hexagon $(x_0,\ldots,x_5)$ in $\K^{4}$.
It turns out to be natural to regard this hexagon
as the $6$-antiperiodic sequence $(x_i)_{i \in \Z}$
defined by $x_{i \pm 6} := -x_i$.  Then
$$
\omega(x_i, x_{i+3}) = \omega(x_{i+3}, x_{i+6}),
\qquad
\omega(x_i, x_{i+2}) = \omega(x_{i+2}, x_{i+6}).
$$
The sequences $\bigl(\omega(x_i, x_{i+2})\bigr)_{i \in \Z}$
and $\bigl(\omega(x_i, x_{i+3})\bigr)_{i \in \Z}$
are $6$-periodic and $3$-periodic, respectively.

The Lagrangian conditions are $\omega(x_i, x_{i+1}) = 0$
and $\omega(x_i, x_{i+2}) \not= 0$.
Thus we may say that we are considering hexagons whose
sides are of ``symplectic length zero'', but whose
``symplectic subdiameters'' are non-zero.
The generic configurations are those with non-zero
``symplectic diameters'': $\omega(x_i, x_{i+3}) \not= 0$.

The three ``diametric symplectic cross-ratios''
$$
c_i :=
\frac
{\om(x_i, x_{i+3})\,\om(x_{i+1}, x_{i+4})}
{\om(x_i, x_{i+4})\,\om(x_{i+1}, x_{i+3})},
\quad i = 0,\, 1,\, 2,
$$
depend only on the original configuration of lines,
not the choice of points $x_i$, and are symplectic invariants.
We will see that they form an essentially complete set of invariants
parametrizing $\cL_{2,6}(\K)$.
As an example, we illustrate $c_0$ by the following diagram.
\begin{equation*}
 \xymatrix @!0 @R=0.55cm @C=1.1cm
 {
&x_0\ar@{-}[rd]\ar@{-}[ld]\ar@{-}[lddd]\ar@2{-}[dddd]&
\\
x_5\ar@{-}[dd]&& x_1\ar@{-}[dd]\ar@2{-}[lldd]\ar@{-}[lddd]\\
\\
x_4&& x_2\\
&x_3\ar@{-}[lu]\ar@{-}[ru]&
}
\end{equation*}
\begin{center}
Figure 1.
The cross-ratio $c_0=
\frac
{\om(x_0, x_3)\,\om(x_1, x_4)}
{\om(x_0, x_4)\,\om(x_1, x_3)}$ on $\cL_{2,6}(\K)$.
\end{center}

\medbreak
Observe that the space of all $(2, 6)$-\Lc s is
12-dimensional: there are three degrees of freedom
for each of the six points, and six Lagrangian conditions.
As mentioned earlier, the 10-dimensional group $\Sp(4, \K)$
acts freely on the generic configurations,
so $\cL_{2,6}(\K)$ is 2-dimensional.
Therefore the three cross-ratios cannot be independent.
In fact, they satisfy the relation
\begin{equation} \label{FirstEq}
   \frac{1}{c_0} + \frac{1}{c_1} + \frac{1}{c_2} = 1.
\end{equation}
Theorem~\ref{Main Thm} resolves the situation,
describing $\cL_{2,6}(\K)$ completely:

\begin{itemize}
\item
(\ref{FirstEq}) is the only relation on the cross-ratios:
any three non-zero $\K$-scalars $c_0, c_1, c_2$
satisfying it are the cross-ratios of a generic \Lc.

\smallbreak \item
The cross-ratios are complete continuous invariants for \Lc s:
equivalent configurations have the same cross-ratios,
and any two configurations with the same cross-ratios
are equivalent if $\K = \C$, and either equivalent or opposite
(see Section~\ref{OCSI}) if $\K = \R$.

\end{itemize}

These results may be reformulated in terms of
normalized configurations as follows.
For $\K = \C$, the $x_i$ can be rescaled so that
the symplectic subdiameters $\om(x_i,x_{i+2})$ are all~$1$.
Then the symplectic diameters $a_i:=\om(x_i,x_{i+3})$
become, up to an overall choice of sign, symplectic invariants.
Indeed, here $c_i = a_i a_{i+1}$, so~(\ref{FirstEq}) becomes
\begin{equation} \label{FirstEqBis}
   a_0 a_1 a_2 = a_0 + a_1 + a_2.
\end{equation}

For $\K = \R$, it may happen that only complex rescalings
can bring all subdiameters to~$1$.
However, the required scale factors are always
either real or pure imaginary.  There are four possibilities:
the normalized $x_i$ with $i$ even are either all real or all
pure imaginary, and similarly for $i$ odd.
If the normalized $x_i$ are all real or all imaginary,
then the $a_i$ are all real, while if the normalized $x_i$
are half real and half imaginary, then the $a_i$ are all imaginary.

Let us remark that up to permutation, $(c_0, c_1, c_2) = (2, 3, 6)$
is the only {\it Egyptian fraction\/} solution of~(\ref{FirstEq}).
It arises from $(a_0, a_1, a_2) = (1, 2, 3)$, the only
positive integer solution of~(\ref{FirstEqBis}).
Integer solutions of the multi-dimensional analogs of these relations
are discussed in \cite{CoOv, Ovs1}.

\subsection{Example: the Gauss relations} \label{Gauss relations}

In the final section of this article we make some
initial remarks on the relations between
the symplectic cross-ratios of $\cL_{n, 2n + 3}$.
Historically, the earliest examples of relations between
cross-ratios arose in Gauss' {\it pentagramma mirificum} \cite{Gau},
which is $\cM_{0, 5}(\K)$, or equivalently, $\cL_{1, 5}(\K)$,
the moduli space of pentagons in $\K\bP^1$.

As we did for Legendrian hexagons, given five points in $\K\bP^1$,
lift them to non-zero points $x_0, \cdots, x_4$ in $\K^2$
and extend to a 5-antiperiodic sequence
$(x_i)_{i \in \Z}$ via $x_{i \pm 5} := -x_i$.
Gauss discovered that the 5-periodic sequence of cross-ratios
$d_i := \omega(x_{i-3}, x_i) \omega(x_{i-2}, x_{i-1}) /
\omega(x_{i-3}, x_{i-2}) \omega(x_{i-1}, x_i)$ satisfy the relations
\begin{equation} \label{Penta}
   d_i d_{i+1} = d_{i+3} + 1.
\end{equation}

These five {\em Gauss relations\/} completely determine
the varietal structure of $\cM_{0,5}$.
They can be rewritten in the remarkable form
$$
\biggl(
\begin{array}{cc}
d_0 & 1 \\[4pt]
-1 & 0
\end{array}
\biggr)
\biggl(
\begin{array}{cc}
d_1 & 1 \\[4pt]
-1 & 0
\end{array}
\biggr)
\biggl(
\begin{array}{cc}
d_2 & 1 \\[4pt]
-1 & 0
\end{array}
\biggr)
\biggl(
\begin{array}{cc}
d_3 & 1 \\[4pt]
-1 & 0
\end{array}
\biggr)
\biggl(
\begin{array}{cc}
d_4 & 1\\[4pt]
-1 & 0
\end{array}
\biggr)=
\biggl(
\begin{array}{cc}
-1 & 0\\[4pt]
0 & -1
\end{array}
\biggr).
$$
This relates the topic to two classical subjects:
the theory of continued fractions and the theory of linear difference equations.
The Gauss relations were the main motivation for
Coxeter~\cite{Cox} to develop the notion of frieze patterns,
relating projective geometry to combinatorics.
Friezes provide a special parametrization of $\cM_{0, N}$;
see~\cite{SVRS} and the appendix of~\cite{MGOT}.

We regard the relations between the symplectic cross-ratios
of $\cL_{n, N}$ as multi-dimensional analogs of the Gauss relations.
Building on preliminary versions of this article,
Morier-Genoud~\cite{Sop} has studied the combinatorial
aspects of $\cL_{2, N}(\C)$, the moduli space of Legendrian
$N$-gons in $\C\bP^3$.  Her work indicates that in general,
$\cL_{n,N}(\K)$ has a rich combinatorial structure related to friezes.

\subsection{Outline of results} \label{Outline}

It is natural to ask for a coordinate system on
the moduli space $\cL_{n, N}(\K)$ of $(n, N)$-\Lc s.
In this article we show that this question is vacuous when
$N$ is $2n$ or $2n+1$, and answer it when $N$ is $2n+2$.
Our coordinates are given by the \scr,
a direct analog of the classical cross-ratio: we show that
$\cL_{n, 2n+2}(\K)$ is parametrized by \scr s
and determine its structure as an algebraic variety.
We expect that \scr s parametrize $\cL_{n, N}(\K)$ for all $N$.
The exposition is organized as follows.

In Section~\ref{LCs} we define \scr s and show that they provide
continuous invariants on $(n, N)$-\Lc s for $N \ge 2n + 2$.
We also deduce the dimension of $\cL_{n, N}(\K)$
and define {\em opposite\/} configurations and equivalence classes,
which over $\R$ are distinguished by {\em sign invariants.\/}

Section~\ref{Main Geo Results} contains our main geometric results,
which we summarize here:

\begin{itemize}

\item
$(n, 2n)$-configurations are all generic and
equivalent over both $\C$ and $\R$.

\smallbreak \item
$(n, 2n+1)$-configurations are all generic.
Over $\C$ they are all equivalent, and
over $\R$ there are two equivalence classes,
which are opposite.

\smallbreak \item
$(n, 2n+2)$-configurations admit $n+1$ diametric \scr s $c_0, \ldots, c_n$:
\begin{equation} \label{L(n, 2n+2) CRs}
   c_i :=
   \frac
   {\om(x_i, x_{i+n+1})\, \om(x_{i+1}, x_{i+n+2})}
   {\om(x_i, x_{i+n+2})\, \om(x_{i+1}, x_{i+n+1})},
\end{equation}
the $x_i$ being arbitrary non-zero points on the lines of the configuration.

Over $\C$, generic $(n, 2n+2)$-configurations are equivalent
if and only if they have the same diametric cross-ratios.
Over $\R$, generic $(n, 2n+2)$-configurations with the same cross-ratios
are either equivalent or in opposite equivalence classes.

The moduli space $\cL_{n, 2n+2}(\K)$ is $n$-dimensional.
The $n+1$ cross-ratios satisfy the relation~(\ref{GenEq}),
and any collection of non-zero $\K$-scalars
$(c_0, \ldots, c_n)$ satisfying~(\ref{GenEq}) is the set
of cross-ratios of an $(n, 2n+2)$-configuration over $\K$.
Thus the cross-ratios are coordinates describing
$\cL_{n, 2n+2}(\K)$ as an algebraic hypersurface in $\K^{n+1}$.
These results are presented in Theorem~\ref{Main Thm}.

\end{itemize}
For $(n, 2n+2)$-configurations the proof has several components
and is given in Section~\ref{PMT Sec}.

In Section~\ref{Normalizations} we present
certain normalized choices of the $x_i$
generalizing Section~\ref{Hexagons}.
For $n$ even there is an essentially unique choice
such that the symplectic subdiameters
$\omega(x_i, x_{i+n})$ are all~$1$.
This normalization provides an alternate coordinate system on
$\cL_{n, 2n+2}(\K)$: the symplectic diameters $a_0, \ldots, a_n$,
where $a_i := \omega(x_i, x_{i+n+1})$.
These diameters are determined up to an overall choice of sign,
and for $\K = \R$ they are either all real or all pure imaginary.
In this coordinate system the relation~(\ref{GenEq})
can be written in terms of the celebrated classical determinants
called {\em continuants.\/}  This connection is emphasized
in Section~\ref{Even Complex Norms};
see Theorem~\ref{EquivThm}.

For $n$ odd, one cannot in general choose the points $x_i$
so that the symplectic subdiameters are all~$1$,
but one can choose them so that the subdiameters
alternate between a scalar $\mu$ and its reciprocal,
and the two alternating products of diameters are equal:
$a_0 a_2 \cdots a_{n-1} = a_1 a_3 \cdots a_n$.
Here $\mu$ is determined up to a $(2n+2)^{\ndup}$
root of unity, and the $a_i$ are determined
up to an overall $(n+1)^{\stup}$ root of unity.

An old idea of projective differential geometry
consists in representing geometric objects
such as curves or configurations of points via
differential or difference operators.
Following this approach, in Section~\ref{SLDEs}
we realize the moduli space of Lagrangian configurations
as the quotient by rescaling of the space of symmetric linear difference
equations with periodic coefficients and antiperiodic solutions;
see Theorem~\ref{MonThm v2}.

We conclude in Section~\ref{Remarks} with a preliminary discussion
of $\cL_{n, 2n+3}$, including a general result on normalizations
in the case that $N/\GCD(n, N)$ is odd, and relations on cross-ratios
for $\cL_{2, 7}$ and $\cL_{3, 9}$, the moduli spaces
of generic Legendrian heptagons in $\K\bP^3$ and
Legendrian nonagons in $\K\bP^5$.

\section{Lagrangian configurations and their moduli spaces} \label{LCs}

In this section we collect some basic properties of Lagrangian configurations
and the action of $\Sp(2n, \K)$ on them.
We prove that the action is free on generic configurations
and introduce two types of invariants:
continuous invariants known as symplectic cross-ratios,
and certain discrete sign invariants.

\subsection{Symplectic cross-ratios}

Consider two pairs of points in $(\K^{2n},\omega)$, $(x_1,x_2)$ and $(y_1,y_2)$,
such that $\omega(x_1, y_2)$ and $\omega(x_2, y_1)$ are non-zero.
We define their {\it \scr\/} to be
\begin{equation}
\label{CRaT}
[x_1, x_2; y_1, y_2] :=
\frac
{\om(x_1, y_1)\, \om(x_2, y_2)} {\om(x_1, y_2)\, \om(x_2,y_1)}.
\end{equation}

The \scr\ is obviously invariant with respect to both the action
of the symplectic group $\Sp(2n,\K)$ and rescalings
$x_i \mapsto \l_i x_i$ and $y_i \mapsto \mu_i y_i$.
Therefore it is in fact a symplectic invariant of 
two pairs of 1-dimensional subspaces in $\K^{2n}$,
or equivalently, of two pairs of points in $\K\bP^{2n-1}$.
Observe the symmetries
\begin{equation} \label{scr symmetries}
   [x_2, x_1; y_1, y_2] = [x_1, x_2; y_1, y_2]^{-1}, \qquad
   [y_1, y_2; x_1, x_2] = [x_1, x_2; y_1, y_2].
\end{equation}

\begin{rem}
For $n > 1$, (\ref{CRaT}) is not the only symplectic invariant of
a quadruple $(\K x_1, \K x_2, \K y_1, \K y_2)$ of lines in $\K^{2n}$.
However, it {\it is\/} the only such invariant if

\begin{itemize}
\smallbreak \item
$\langle x_1,x_2 \rangle$ and $\langle y_1,y_2 \rangle$ are
{\em isotropic,\/} i.e., $\om(x_1,x_2)$ and $\om(y_1,y_2)$ are $0$, and

\smallbreak \item
$(x_1,x_2,y_1,y_2)$ is {\it generic\/} under this condition.
\end{itemize}
\end{rem}

In the 1-dimensional case, (\ref{CRaT}) is nothing but
the classical cross-ratio of~$4$ points on the projective line.
In affine coordinates, it is given by the usual formula:
$$
\left[x_1,x_2;y_1,y_2\right]=
\frac
{\left(x_1 - y_1\right) \left(x_2 - y_2\right)}
{\left(x_1 - y_2\right) \left(x_2 - y_1\right)}.
$$
It is the unique $\PSL(2, \K)$-invariant of $(x_1, x_2, y_1, y_2)$.
Different partitions of the points into two pairs give six different
cross-ratios, but any one of them determines the others.

\begin{rem}
The cross-ratio plays a fundamental role in many areas,
from projective geometry to mathematical physics; for an overview, see~\cite{Lab}.
It is the discrete version of the Schwarzian derivative; see, e.g.,~\cite{OT}.
Different versions of multi-dimensional symplectic cross-ratios have been considered.
One is an invariant of a quadruples of Lagrangian planes related to the Maslov index;
again, see \cite{OT} for a survey.
Another is a unitary group invariant defined in the complex setting; see~\cite{FP}.
The symplectic cross-ratio~(\ref{CRaT}) is the most straightforward
generalization of the 1-dimensional cross-ratio.
It has been used to construct symplectic projective invariants in other settings;
see, e.g., \cite{OS} (p.~367) and~\cite{Yag}.
\end{rem}

\subsection{Gram matrices}

Given a collection $x_1, \ldots, x_m$ of vectors in $\K^{2n}$, we define
their {\em $\omega$-Gram matrix\/} $\Omega(x_1, \ldots, x_m)$
to be the $m \times m$ matrix whose entries are their symplectic products:
\begin{equation*}
   \Omega(x_1, \ldots, x_m)_{ij} := \omega(x_i, x_j).
\end{equation*}
We will use the following standard lemma throughout the paper.
Its proof is elementary and is omitted.

\begin{lem} \label{Gram}
Suppose that $x_1, \ldots, x_m$ and $x'_1, \ldots, x'_m$
are two collections of $m$ vectors in $\K^{2n}$.

\begin{enumerate}

\item[(i)]
$\Omega(x_1, \ldots, x_m)$ is of rank at most $2n$.

\smallbreak \item[(ii)]
$\Omega(x_1, \ldots, x_m)$ is of rank~$2n$ if and only if
$\langle x_1, \ldots, x_m \rangle = \K^{2n}$.

\smallbreak \item[(iii)]
If\/ $\Omega(x_1, \ldots, x_m)$ and $\Omega(x'_1, \ldots, x'_m)$
are equal and of rank~$2n$, then there is a unique symplectic
transformation $T$ such that $T(x_i) = x'_i$ for all~$i$.

\end{enumerate}
\end{lem}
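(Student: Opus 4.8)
The plan is to translate everything into matrix form. Writing $J$ for the matrix of $\omega$ in the standard basis, so that $\omega(u,v)=u^{T}Jv$ with $J$ invertible, and letting $X$ be the $2n\times m$ matrix with columns $x_1,\dots,x_m$, the Gram matrix factors as $\Omega(x_1,\dots,x_m)=X^{T}JX$. Part~(i) then follows at once, since the rank of a product is bounded by the rank of each factor and $X$ has only $2n$ rows: $\operatorname{rank}(\Omega)\le\operatorname{rank}(X)\le 2n$.

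For part~(ii) I would argue both implications from this factorization. If $\operatorname{rank}(\Omega)=2n$ then the same inequality forces $\operatorname{rank}(X)=2n$, i.e.\ the columns span $\K^{2n}$. Conversely, if the $x_i$ span, then $X$ has full row rank and hence a right inverse $R$ with $XR=I_{2n}$; then $R^{T}\Omega R=(XR)^{T}J(XR)=J$, so $\operatorname{rank}(\Omega)\ge\operatorname{rank}(J)=2n$, which with~(i) yields equality. (Equivalently, one may identify $\operatorname{rank}(\Omega)$ with the rank of the restriction of $\omega$ to $V=\langle x_1,\dots,x_m\rangle$, which is $2n$ precisely when $V=\K^{2n}$.)

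The real content is part~(iii). Granting $\Omega=\Omega'$ of rank $2n$, part~(ii) tells me that both families span $\K^{2n}$, so I would try to \emph{define} $T$ by $Tx_i:=x'_i$ extended linearly. The one thing to verify — and this is the step I expect to be the crux, precisely because when $m>2n$ the spanning family is redundant — is that $T$ is well defined: every linear relation $\sum_i c_i x_i=0$ must be inherited by the $x'_i$. This is where non-degeneracy of $\omega$ enters. For each $j$,
\[
   \omega\Bigl(\textstyle\sum_i c_i x'_i,\ x'_j\Bigr)
   =\sum_i c_i\,\Omega'_{ij}
   =\sum_i c_i\,\Omega_{ij}
   =\omega\Bigl(\textstyle\sum_i c_i x_i,\ x_j\Bigr)=0,
\]
so $\sum_i c_i x'_i$ is $\omega$-orthogonal to the spanning family $\{x'_j\}$ and therefore vanishes.

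Once $T$ is a well-defined linear endomorphism, the rest is routine. It is symplectic because $\omega(Tx_i,Tx_j)=\omega(x'_i,x'_j)=\Omega'_{ij}=\Omega_{ij}=\omega(x_i,x_j)$, and the $x_i$ span, so $\omega(Tu,Tv)=\omega(u,v)$ for all $u,v$ by bilinearity; it is invertible since it preserves a non-degenerate form (equivalently, it carries the spanning family $\{x_i\}$ onto the spanning family $\{x'_i\}$), hence $T\in\Sp(2n,\K)$. Uniqueness is immediate, as any two symplectic maps agreeing on the spanning family $\{x_i\}$ agree everywhere.
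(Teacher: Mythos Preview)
Your proof is correct in all parts; the paper itself omits the proof of this lemma, calling it elementary, so there is nothing to compare against. The matrix factorization $\Omega = X^{T}JX$ and the well-definedness check via non-degeneracy of $\omega$ are exactly the standard arguments one would expect.
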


\begin{lem} \label{ii from i}
Fix $N \ge 2n$ and let $(x_1, \ldots, x_N)$ be
an $N$-tuple of points in $\K^{2n}$.
Define $(x_i)_{i \in \Z}$ via $x_{i \pm N} := -x_i$, and assume that
$\langle x_{i+1}, \ldots, x_{i+n} \rangle$ is Lagrangian for all~$i$.
Then $(\K x_1, \ldots, \K x_N )$ is an $(n, N)$-\Lc\
if and only if $\omega(x_i, x_{i+n}) \not= 0$ for all~$i$.
\end{lem}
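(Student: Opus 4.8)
The plan is to prove Lemma~\ref{ii from i} by recognizing that Property~(ii) of the configuration is exactly a statement about the rank of certain $\omega$-Gram matrices, and then reducing that rank condition to the single non-vanishing condition $\omega(x_i, x_{i+n}) \neq 0$. By Lemma~\ref{Gram}(ii), the condition $\langle x_{i+1}, \ldots, x_{i+2n} \rangle = \K^{2n}$ is equivalent to the $2n \times 2n$ matrix $\Omega(x_{i+1}, \ldots, x_{i+2n})$ having full rank $2n$. So the entire lemma amounts to showing that, under the standing hypothesis that every $n$ consecutive vectors are Lagrangian, this Gram matrix is invertible if and only if $\omega(x_{i+1}, x_{i+n+1}) \neq 0$ (reindexing so the relevant diameter appears).

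First I would write out the structure of $\Omega(x_{i+1}, \ldots, x_{i+2n})$ explicitly. Group the $2n$ vectors into two blocks of $n$ consecutive vectors, $A = (x_{i+1}, \ldots, x_{i+n})$ and $B = (x_{i+n+1}, \ldots, x_{i+2n})$. The Lagrangian hypothesis forces $\omega(x_j, x_k) = 0$ whenever $x_j, x_k$ lie in the same block, so the Gram matrix takes the block form
\begin{equation*}
   \Omega =
   \begin{pmatrix} 0 & M \\ -M^{\mathsf T} & 0 \end{pmatrix},
\end{equation*}
where $M$ is the $n \times n$ matrix with entries $\omega(x_{i+p}, x_{i+n+q})$ for $1 \le p, q \le n$. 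The determinant of such an antidiagonal block matrix is $\pm(\det M)^2$, so $\Omega$ has rank $2n$ if and only if $M$ is invertible. Thus Property~(ii) at index $i$ is equivalent to the invertibility of $M$.

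The key observation, which is where the real content lies, is that $M$ is lower (or upper) anti-triangular. Indeed, $\omega(x_{i+p}, x_{i+n+q})$ vanishes whenever the two vectors sit inside a common window of $n$ consecutive lines, i.e.\ whenever $(i+n+q) - (i+p) \le n-1$, that is $q \le p - 1$. Hence $M_{pq} = 0$ for $q < p$, making $M$ upper triangular, and its diagonal entries are $M_{pp} = \omega(x_{i+p}, x_{i+n+p})$. Therefore $\det M = \prod_{p=1}^{n} \omega(x_{i+p}, x_{i+n+p})$, and $M$ is invertible precisely when none of these diameters vanishes. Running over all~$i$, the conjunction of all these conditions is exactly $\omega(x_j, x_{j+n}) \neq 0$ for all~$j$, which is the claimed criterion.

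The main obstacle I anticipate is the bookkeeping around the cyclic, antiperiodic indexing: one must verify that the vanishing pattern giving anti-triangularity of $M$ comes genuinely from the Lagrangian hypothesis applied to the correct windows of~$n$ consecutive vectors, and that passing to the antiperiodic extension $x_{i \pm N} := -x_i$ does not disturb the non-vanishing of $\omega(x_j, x_{j+n})$ (it only changes these products by an overall sign, so their vanishing is well-defined modulo~$N$). I would also need to handle the edge of the diameter condition carefully: the entries $\omega(x_{i+p}, x_{i+n+p})$ are precisely the ``$n$-step apart'' products, which are the diameters, and these are the last products \emph{not} forced to vanish by the Lagrangian condition, so the triangular structure is exactly tight. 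Once the anti-triangular form and its diagonal are identified, the equivalence is immediate, so the entire argument hinges on setting up the block decomposition and reading off the vanishing pattern correctly.
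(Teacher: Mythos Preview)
Your proposal is correct and follows essentially the same approach as the paper: both reduce Property~(ii) to the invertibility of the Gram matrix $\Omega(x_{i+1},\ldots,x_{i+2n})$ via Lemma~\ref{Gram}(ii), observe the block form $\left(\begin{smallmatrix}0&M\\-M^{\mathsf T}&0\end{smallmatrix}\right)$ forced by the Lagrangian hypothesis, and read off that $M$ is upper triangular with diagonal entries $\omega(x_{i+p},x_{i+n+p})$. One small wording slip: you call $M$ ``anti-triangular'' before immediately (and correctly) showing $M_{pq}=0$ for $q<p$, i.e.\ upper triangular; the computation itself is right.
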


\begin{proof}
We must show that $\langle x_{i+1}, \ldots, x_{i+2n} \rangle = \K^{2n}$
for all~$i$ if and only if $\omega(x_i, x_{i+n}) \not= 0$ for all~$i$.
Consider the $\omega$-Gram matrix
$\Omega^i := \Omega(x_{i+1}, \ldots, x_{i+2n})$.
By Lemma~\ref{Gram}(ii), $\det(\Omega^i) \not= 0$ if and only if
$x_{i+1}, \ldots, x_{i+2n}$ form a basis of $\K^{2n}$.
The Lagrangian condition implies that the
block $2 \times 2$ form of $\Omega^i$ is
$\left(\begin{smallmatrix} 0 & A \\ -A^T & 0 \end{smallmatrix}\right)$,
where $A$ is upper triangular with diagonal entries
$\omega(x_{i+r}, x_{i+r+n})$, $1 \le r \le n$.
\end{proof}

\subsection{Continuous invariants}

Given an $(n, N)$-\Lc\ $(X_1, \ldots, X_N)$, fix {\em representatives:\/}
non-zero points $x_i$ on the lines $X_i$.
As in Section~\ref{Hexagons}, extend the $N$-tuple $(x_1, \ldots, x_N)$
to an $N$-antiperiodic sequence
\begin{equation} \label{antiperiodic}
   (x_i)_{i \in \Z}, \qquad x_{i \pm N} := -x_i.
\end{equation}
Write $\omega_{ij}$ for $\omega(x_i, x_j)$, and observe that
\begin{equation} \label{omega relations}
   \omega_{i+N, j} = \omega_{ji} = -\omega_{ij}, \qquad
   \omega_{j, i+N} = \omega_{ij}.
\end{equation}

Of course the $\omega_{ij}$ depend on the choice of the $x_i$,
but the \scr s of the configuration do not.
Define projective symplectic invariants
\begin{equation}
\label{Ccoord}
   c_{i_1 i_2 j_1 j_2} :=
   [x_{i_1}, x_{i_2}; x_{j_1}, x_{j_2}] =
   \frac{\omega_{i_1 j_1}\, \omega_{i_2 j_2}}
   {\omega_{i_1 j_2}\, \omega_{i_2 j_1}}.
\end{equation}
Note that $c_{i_1 i_2 j_1 j_2}$ has the symmetries~(\ref{scr symmetries})
and in addition is invariant under the addition of $N$ to any of the four indices.
Due to the Lagrangian condition
many of the $c_{i_1 i_2 j_1 j_2}$ vanish or are not well defined.
To be precise, define the
{\em $N$-cyclic distance\/} $|i-j|_N$
between any two integers~$i$ and~$j$ to be their
``separation modulo~$N$'':
\begin{equation*}
   |i-j|_N := \min \bigl\{|i-j+qN|: q \in \Z \bigr\}.
\end{equation*}

The Lagrangian condition is $\omega_{ij} = 0$ for $|i-j|_N < n$.
Therefore $c_{i_1 i_2 j_1 j_2}$ is either zero or undefined
unless $|i_{\e_i} - j_{\e_j}|_N \ge n$
for $\e_i$ and $\e_j$ either~$1$ or~$2$.
Moreover, if either $i_1 = i_2$ or $j_1 = j_2$,
then $c_{i_1 i_2 j_1 j_2}$ is~$1$ if defined.
It follows that there are no non-trivial cross-ratios
when $N$ is $2n$ or $2n+1$,
and the only cross-ratios of configurations with $N= 2n+2$
taking values other than~$0$ and~$1$
are the $c_i$ of~(\ref{L(n, 2n+2) CRs}):
\begin{equation} \label{L(n, 2n+2) CRs form 2}
   c_i = c_{i, i+1, i+n+1, i+n+2} =
   \frac{\omega_{i, i+n+1}\, \omega_{i+1, i+n+2}}
   {\omega_{i-n, i}\, \omega_{i+1, i+n+1}}.
\end{equation}
Observe that for $N = 2n+2$ these cross-ratios are $(n+1)$-periodic.
Lemma~\ref{ii from i} shows that in this case they are also always well defined.

We conjecture that in general, (\ref{Ccoord}) is a coordinate ring
on $\cL_{n, N}(\K)$ with polynomial relations.
This is confirmed only for $N \le 2n + 2$.
It would be interesting to find minimal cyclically invariant
subsets of~(\ref{Ccoord}) providing such coordinate rings.

\subsection{Opposite configurations and sign invariants} \label{OCSI}

Note that the negation of a symplectic form is also a symplectic form;
in particular, $-\omega$ is a symplectic form on $\K^{2n}$.
This leads to the notion of opposite \Lc s.
To be concrete, observe that the $2n \times 2n$ matrix
$Q = \left(\begin{smallmatrix} \Id & 0 \\ 0 & -\Id \end{smallmatrix}\right)$
has the property $\omega(Qx, Qy) = -\omega(x, y)$
for all $x$ and $y$ in $\K^{2n}$.

In light of the fact that conjugation by $Q$ preserves $\Sp(2n, \K)$,
the following lemma is clear.

\begin{lem} \label{opposite}

\begin{enumerate}

\item[(i)]
If $(X_1, \ldots, X_N)$ is a \Lc,
then so is $(Q X_1, \ldots, Q X_N)$.
We refer to them as\/ {\em opposites.\/}

\item[(ii)]
Opposite configurations have the same cross-ratios,
and if one is generic, then so is the other.

\item[(iii)]
If two \Lc s are equivalent, then their opposites are also equivalent.
\end{enumerate}
\end{lem}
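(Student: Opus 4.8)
The plan is to verify each of the three claims in Lemma~\ref{opposite} directly, exploiting the single structural fact recorded just before the statement: the matrix $Q = \left(\begin{smallmatrix} \Id & 0 \\ 0 & -\Id \end{smallmatrix}\right)$ satisfies $\omega(Qx, Qy) = -\omega(x, y)$ for all $x, y \in \K^{2n}$. Everything reduces to bookkeeping with this identity, so the work is to confirm that the defining conditions of a Lagrangian configuration, genericity, the cross-ratios, and equivalence all transform predictably under the map $x \mapsto Qx$.

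For part (i), I would first observe that $Q$ is invertible, so $\langle QX_{i+1}, \ldots, QX_{i+m} \rangle = Q\langle X_{i+1}, \ldots, X_{i+m} \rangle$ and $Q$ preserves dimensions of spans. The Lagrangian condition of Property~(i) says $\omega$ vanishes on $\langle X_{i+1}, \ldots, X_{i+n} \rangle$; since $\omega(Qx, Qy) = -\omega(x,y)$, it vanishes on the image exactly when it vanishes on the original, so the image subspace is again Lagrangian (and still $n$-dimensional). Property~(ii) is purely about spans equaling $\K^{2n}$, which $Q$ preserves since it is a bijection. Hence $(QX_1, \ldots, QX_N)$ is a \Lc.

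For part (ii), the key computation is that the cross-ratio is unchanged. Lifting $QX_i$ to the representative $Qx_i$, the relevant symplectic products become $\omega(Qx_a, Qx_b) = -\omega(x_a, x_b) = -\omega_{ab}$. In the cross-ratio formula~(\ref{Ccoord}), each of the four factors acquires a sign $-1$, and these cancel in the ratio of two products over two products, leaving $c_{i_1 i_2 j_1 j_2}$ unchanged; in particular the diametric cross-ratios~(\ref{L(n, 2n+2) CRs}) are preserved. For genericity I would note that $\langle QX_i, QX_j \rangle = Q\langle X_i, X_j \rangle$ is isotropic if and only if $\langle X_i, X_j \rangle$ is (again because $\omega \circ (Q \times Q) = -\omega$ has the same zero set as $\omega$), so the non-isotropy condition defining genericity transfers verbatim.

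For part (iii), suppose two configurations are related by some $T \in \Sp(2n, \K)$, say $T X_i = X_i'$. I would use the fact, asserted in the excerpt, that conjugation by $Q$ preserves $\Sp(2n, \K)$: the map $QTQ^{-1}$ lies in $\Sp(2n, \K)$ and sends $QX_i$ to $QTQ^{-1}QX_i = QTX_i = QX_i'$, exhibiting the opposites as equivalent. I do not anticipate a genuine obstacle here, as the lemma is flagged as ``clear''; the only point requiring a moment's care is the sign cancellation in part~(ii), where one must confirm that an even number of factors (two in the numerator, two in the denominator) each pick up the sign $-1$ so that the cross-ratio is genuinely invariant rather than merely invariant up to sign.
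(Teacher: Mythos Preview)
Your proposal is correct and follows exactly the approach the paper has in mind: the paper does not give a proof at all, simply noting that the lemma is clear from the identity $\omega(Qx,Qy)=-\omega(x,y)$ and the fact that conjugation by $Q$ preserves $\Sp(2n,\K)$. Your argument spells out precisely these two points for each part, with the expected sign-cancellation check in~(ii).
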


Therefore we may speak of {\it opposite equivalence classes}
in $\cL_{n, N}(\K)$.
Because $Q$ is not symplectic,
a configuration is not {\it a priori} equivalent to its opposite.
In order to resolve the situation we define the {\em sign invariants.\/}
Let us write $\sgn$ for the sign function:
\begin{equation*}
   \sgn: \R \backslash \{0\} \to \{ \pm 1 \}.
\end{equation*}

Let $(\R x_1, \ldots, \R x_N)$ be a real $(n, N)$-\Lc, and
suppose that $j_0, j_1, \ldots, j_r$ are any integers
such that $j_0 \equiv j_r$ modulo $N$.
If $\omega_{j_{s-1}, j_s} \not= 0$ for all $s$, consider
the product $\prod _1^r \omega_{j_{s-1}, j_s}$.
Suppose we rescale each $x_j$ by some $\lambda_j$.
Because the $\lambda_j$ are by definition $N$-periodic,
$\lambda_{j_0} = \lambda_{j_r}$, so
the product rescales by the positive quantity
$\prod_1^r \lambda_{j_s}^2$.
This gives the following lemma.

\begin{lem} \label{sgn}
Let $(\R x_1, \ldots, \R x_N)$ be a real $(n, N)$-\Lc.
If $j_0, \ldots, j_r$ are integers such that $j_0 \equiv j_r$
modulo $N$ and $\omega_{j_{s-1}, j_s} \not= 0$ for all~$s$,
then $\sgn \bigl( \prod _1^r \omega_{j_{s-1}, j_s} \bigr)$
is an invariant of the configuration.
\end{lem}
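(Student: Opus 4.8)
The plan is to show that the quantity $\sgn\bigl(\prod_1^r \omega_{j_{s-1},j_s}\bigr)$ is unchanged under the only freedom we have in defining it, namely the choice of representatives $x_i$ on the lines $X_i$. First I would recall that a Lagrangian configuration is a tuple of \emph{lines}, and the numbers $\omega_{ij}$ are computed from chosen non-zero representatives $x_i$, extended to the $N$-antiperiodic sequence via $x_{i\pm N}:=-x_i$ as in~(\ref{antiperiodic}). Any other admissible choice of representatives replaces each $x_i$ by $\lambda_i x_i$ for some non-zero scalar $\lambda_i\in\R\setminus\{0\}$, and crucially the scalars inherit the same $N$-antiperiodicity, so that they descend to a well-defined $N$-periodic assignment $j\mapsto\lambda_j^2$ on the relevant products (the sign $-1$ from antiperiodicity cancels in the ratios of $\omega$'s, but here we are tracking a bare product, so I must be slightly careful and work with the periodicity of $\lambda_j$ directly).

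The key computation, which the preamble to the lemma has essentially already carried out, is the following. Under rescaling $x_j\mapsto\lambda_j x_j$, each factor transforms as $\omega_{j_{s-1},j_s}\mapsto \lambda_{j_{s-1}}\lambda_{j_s}\,\omega_{j_{s-1},j_s}$ by bilinearity of $\omega$. Taking the product over $s=1,\ldots,r$, the scalar prefactor is
\begin{equation*}
   \prod_{s=1}^r \lambda_{j_{s-1}}\lambda_{j_s}
   = \lambda_{j_0}\lambda_{j_r}\prod_{s=1}^{r-1}\lambda_{j_s}^2,
\end{equation*}
since each interior index $j_s$ with $1\le s\le r-1$ appears exactly twice (once as the right endpoint of factor $s$ and once as the left endpoint of factor $s+1$), while the two endpoints $j_0$ and $j_r$ each appear once. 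The hypothesis $j_0\equiv j_r$ modulo $N$, together with the $N$-periodicity of the rescaling, forces $\lambda_{j_0}=\lambda_{j_r}$, whence $\lambda_{j_0}\lambda_{j_r}=\lambda_{j_0}^2$. Therefore the total prefactor equals $\prod_{s=0}^{r-1}\lambda_{j_s}^2$, which is a product of squares of non-zero reals and hence strictly positive. Multiplication by a positive scalar does not change the sign, so $\sgn\bigl(\prod_1^r\omega_{j_{s-1},j_s}\bigr)$ is independent of the choice of representatives, and since it is manifestly unchanged by the $\Sp(2n,\R)$-action (as $\omega$ is $\Sp(2n,\R)$-invariant), it is an invariant of the equivalence class of the configuration.

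I anticipate no serious obstacle here; the statement is essentially a bookkeeping observation. The only point requiring mild care is the endpoint matching: one must verify that closing the index cycle via $j_0\equiv j_r$ is precisely what pairs up the two otherwise-unpaired endpoints into a square, and that the $N$-periodicity of the $\lambda_j$ (rather than mere antiperiodicity) is what makes $\lambda_{j_0}=\lambda_{j_r}$ legitimate even when $j_0$ and $j_r$ differ by a multiple of $N$. One should also note that the hypothesis $\omega_{j_{s-1},j_s}\neq 0$ for all $s$ guarantees the product is non-zero so that its sign is genuinely defined. Since the preamble already performs the reduction, my proof would simply invoke the displayed factorization and conclude.
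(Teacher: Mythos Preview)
Your proposal is correct and follows exactly the same approach as the paper: the paper's proof is the short paragraph preceding the lemma, which observes that rescaling by $N$-periodic $\lambda_j$ multiplies the product by the positive quantity $\prod_1^r\lambda_{j_s}^2$, and you have simply expanded that observation with care about the endpoint pairing and the $\Sp(2n,\R)$-invariance.
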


The next proposition elucidates equivalence
and inequivalence of opposite configurations.
We write $\GCD$ for the greatest common divisor function.

\begin{prop} \label{OEI}

\begin{enumerate}

\smallbreak \item[(i)]
Opposite complex configurations are equivalent.

\smallbreak \item[(ii)]
Opposite real $(n, N)$-configurations are inequivalent if $N/ \GCD(n, N)$ is odd.

\smallbreak \item[(iii)]
Opposite real generic $(n, N)$-configurations are inequivalent for $N > 2n$.

\end{enumerate}
\end{prop}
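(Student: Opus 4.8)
The plan is to handle the complex case (i) by exhibiting an explicit symplectic map, and to handle the two real cases (ii) and (iii) by producing a \emph{sign invariant} (Lemma~\ref{sgn}) that takes different values on a configuration and its opposite. For (i), the point is that although $Q$ is not symplectic, the scalar multiple $\ii Q$ is: from $\omega(Qx,Qy)=-\omega(x,y)$ one gets $\omega(\ii Q x,\ii Q y)=\ii^{2}\bigl(-\omega(x,y)\bigr)=\omega(x,y)$, so $\ii Q\in\Sp(2n,\C)$ (it preserves the nondegenerate form, hence is invertible). Since $\ii Q\,X_j=QX_j$ as lines, $\ii Q$ carries $(X_1,\dots,X_N)$ to its opposite $(QX_1,\dots,QX_N)$, which is therefore equivalent to it.

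The mechanism for the real cases rests on one observation: passing to the opposite replaces each representative $x_j$ by $Qx_j$ and hence each $\omega_{jk}$ by $-\omega_{jk}$. Consequently, for any closed index sequence $j_0,j_1,\dots,j_r$ with $j_0\equiv j_r$ modulo $N$ and all $\omega_{j_{s-1}j_s}\neq 0$, the sign invariant $\sgn\!\bigl(\prod_1^r \omega_{j_{s-1}j_s}\bigr)$ of Lemma~\ref{sgn} is multiplied by $(-1)^r$ when we pass to the opposite. Since equivalent configurations share all sign invariants (a symplectic $T$ preserves every $\omega_{jk}$), it will suffice to exhibit an admissible closed sequence of \emph{odd} length $r$: then the opposite has a different sign invariant, forcing inequivalence.

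For (ii) I would take $j_s=sn$ for $0\le s\le L$, where $L:=N/\GCD(n,N)$; this closes up because $Ln=(n/\GCD(n,N))\,N\equiv 0$. Each step has cyclic distance $|n|_N=n$, and $\omega_{sn,(s+1)n}=\omega(x_{sn},x_{sn+n})\neq 0$ for every Lagrangian configuration by Lemma~\ref{ii from i}, so the sequence is admissible with no genericity assumption; its length $L$ is odd by hypothesis, and (ii) follows.

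For (iii) genericity guarantees $\omega_{jk}\neq0$ whenever $|j-k|_N\ge n$, so any closed walk in the graph $G$ on $\Z/N\Z$ with $j\sim k \iff |j-k|_N\ge n$ gives an admissible sign invariant; it thus suffices to show $G$ is non-bipartite, i.e.\ contains an odd cycle. This is the heart of the matter and the one place where $N>2n$ is used essentially. My argument: when $N>2n$ the steps of size $n$ and of size $n+1$ are both edges of $G$ (their cyclic distances are $\ge n$). Were $G$ bipartite with $2$-coloring $c$, each of these steps would flip the color, so their difference---translation by $1$---would preserve it; as translation by $1$ acts transitively on $\Z/N\Z$, $c$ would be constant, contradicting the existence of an edge. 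Hence $G$ has an odd cycle. The hard part is precisely isolating this odd cycle: for $N\ge 3n$ a triangle of three roughly equally spaced points (pairwise cyclic distance $\ge n$) already works, but in the range $2n<N<3n$ no such triangle exists, and it is the $n$-versus-$(n+1)$ parity trick that produces an odd cycle uniformly across all $N>2n$.
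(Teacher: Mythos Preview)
Your proof is correct and follows the paper's approach closely: part~(i) is identical (the observation that $\ii Q$ is symplectic), and part~(ii) uses exactly the same odd-length closed walk $j_s=sn$ of length $N/\GCD(n,N)$. For part~(iii) the paper simply asserts that an odd closed sequence with steps of cyclic distance $\ge n$ exists; your non-bipartiteness argument via the $n$-step and $(n{+}1)$-step both flipping a putative $2$-coloring (hence translation by~$1$ preserving it) is a clean and complete justification of that assertion, filling in a detail the paper leaves to the reader.
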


\begin{proof}
Let $(\K x_1, \ldots \K x_N)$ be a configuration.
For~(i), note that $iQ$ is symplectic and $i Q \C x_j = Q \C x_j$.

For~(ii), note that the sign invariant
$\sgn \bigl( \prod_{s=1}^{N/\GCD(n, N)} \omega_{(s-1)n, sn} \bigr)$ negates under passage
to the representatives $Q x_j$ of the opposite configuration.

In the setting of~(iii), it is always possible to find a sequence $j_0, \ldots, j_r$
with $r$ odd, $j_0 \equiv j_r$ modulo $N$, and $|j_s - j_{s-1}|_N \ge n$,
whence the invariant $\sgn \bigl(\prod _1^r \omega_{j_{s-1}, j_s} \bigr)$
distinguishes between the configuration and its opposite.
\end{proof}

\subsection{Dimensions and standard configurations} \label{DSC}

We now discuss the dimension of $\cL_{n, N}$,
which determines the number of independent relations
which must be satisfied by any coordinate ring
of symplectic invariants of configurations.
It turns out that for $N \ge 2n+2$,
this dimension is always strictly less than
the number of non-trivial \scr s.
As stated in Section~\ref{Outline}, at $N = 2n+2$
the dimension is~$n$ and there are $n+1$ invariants $c_i$,
so there must be one relation.
It is given in Theorem~\ref{Main Thm}.

Let us begin by defining a convenient normal form
for sets of representatives of configurations.
Recall that we write $\{ e_1, \ldots, e_n, f_1, \ldots, f_n \}$
for the standard basis of $\K^{2n}$.

\begin{defn}
Given an $(n, N)$-\Lc\ $(\K x_1, \ldots, \K x_N)$,
the representatives $x_1, \ldots, x_N$ are said to be
{\em standard} if for some vectors
$g_i \in \langle f_1, \ldots, f_{i-1} \rangle$,
$x_1, \ldots, x_{2n}$ are given by
\begin{equation*}
   x_1 = e_1, \quad x_2 = e_2, \quad\ldots,\quad x_n = e_n, \qquad
   x_{n+1} = f_1, \quad x_{n+2} = f_2 + g_2,
   \quad\ldots,\quad x_{2n} = f_n + g_n.
\end{equation*}
\end{defn}

\begin{lem} \label{standard}

\begin{enumerate}

\smallbreak \item[(i)]
Every $(n, N)$-\Lc\ is symplectically equivalent
to a configuration with standard representatives.

\smallbreak \item[(ii)]
In a configuration with standard representatives,
$\omega(e_j, g_i) = 0$ for $j < i + 2n - N$.

\smallbreak \item[(iii)]
In a generic configuration with standard representatives,
$\omega(e_j, g_i) \not= 0$ for $i + 2n - N \le j < i$.

\end{enumerate}
\end{lem}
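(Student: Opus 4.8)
�The plan is to prove the three parts of Lemma~\ref{standard} in order, using the action of $\Sp(2n,\K)$ together with the Lagrangian condition and the definition of genericity.

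For part~(i), I would argue by successively normalizing the representatives using symplectic transformations. The first $n$ lines $X_1, \ldots, X_n$ span a Lagrangian subspace by Property~(i), hence are linearly independent; so there is a symplectic transformation carrying them to $\langle e_1, \ldots, e_n \rangle$, and then rescaling (which preserves the configuration up to the choice of representatives) lets us set $x_j = e_j$ for $1 \le j \le n$. It remains to put $x_{n+1}, \ldots, x_{2n}$ into the stated form while preserving the $e_j$. The stabilizer in $\Sp(2n,\K)$ of the Lagrangian flag $\langle e_1, \ldots, e_n \rangle$ and of each $e_j$ still acts on the complementary directions, and I would use it to adjust $x_{n+i}$. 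The key point is that, again by the Lagrangian condition, $\langle x_2, \ldots, x_{n+1} \rangle$ is Lagrangian, which constrains $x_{n+1}$ to lie in $\langle e_2, \ldots, e_n, f_1 \rangle$ modulo the span already fixed; iterating this for $x_{n+2}, \ldots, x_{2n}$ and using the non-degeneracy from Lemma~\ref{ii from i} (so the relevant pairings $\omega(x_{i}, x_{i+n})$ are nonzero) forces $x_{n+i} = f_i + g_i$ with $g_i \in \langle f_1, \ldots, f_{i-1} \rangle$ after a final rescaling to normalize the $f_i$-coefficient to~$1$. Triangularity of the Gram matrix $A$ established in the proof of Lemma~\ref{ii from i} is exactly what makes this successive normalization possible.

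For part~(ii), I would compute $\omega(e_j, g_i)$ directly. Since $g_i \in \langle f_1, \ldots, f_{i-1} \rangle$ and $\omega(e_j, f_k) = \delta_{jk}$, the pairing $\omega(e_j, g_i)$ can only be nonzero when $1 \le j \le i-1$; but the genuine constraint comes from the Lagrangian condition applied to the consecutive lines wrapping around the cyclic index. The vector $e_j = x_j$ and the vector $g_i$ is the $\langle f_1, \ldots, f_{i-1}\rangle$-part of $x_{n+i}$, and the relevant Lagrangian span $\langle x_{n+i-n+1}, \ldots, x_{n+i} \rangle = \langle x_{i+1}, \ldots, x_{n+i} \rangle$ (read cyclically) being isotropic forces $\omega(x_j, x_{n+i}) = 0$ whenever the cyclic distance is less than~$n$. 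Translating the index bookkeeping, the condition $j < i + 2n - N$ is precisely the range in which the cyclic distance between $j$ and $n+i$ is forced small enough by the $N$-antiperiodicity to land inside a common Lagrangian block, giving $\omega(e_j, g_i) = \omega(x_j, x_{n+i}) = 0$.

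For part~(iii), I would use the genericity hypothesis: $\omega(X_j, X_{n+i}) \ne 0$ exactly when the $N$-cyclic distance between $j$ and $n+i$ is at least~$n$. The range $i + 2n - N \le j < i$ corresponds to those indices where the pairing $\omega(x_j, x_{n+i})$ is \emph{not} forced to vanish by Property~(i) and hence, by the definition of generic, is nonzero; since for standard representatives $\omega(x_j, x_{n+i}) = \omega(e_j, f_i + g_i) = \delta_{ji} + \omega(e_j, g_i)$ and the range $j < i$ kills the Kronecker term, this yields $\omega(e_j, g_i) \ne 0$. The main obstacle throughout will be getting the cyclic index arithmetic and the antiperiodicity signs from~(\ref{omega relations}) exactly right, so that the three index ranges $j < i+2n-N$, $i+2n-N \le j < i$, and $j \ge i$ line up correctly with ``forced isotropic'', ``generically non-isotropic'', and ``out of range'' respectively; the underlying linear algebra in each case is short once the bookkeeping is pinned down.
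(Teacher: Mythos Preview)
Your argument is correct. For part~(i) you take a different route from the paper: you normalize iteratively, first sending the Lagrangian $\langle x_1,\ldots,x_n\rangle$ to $\langle e_1,\ldots,e_n\rangle$ and then using the unipotent stabilizer of $(e_1,\ldots,e_n)$ step by step to strip the $e$-components from each $x_{n+i}$, while the Lagrangian conditions $\omega(x_{n+j},x_{n+i})=0$ for $j<i$ force the $f$-component of $x_{n+i}$ to land in $\langle f_1,\ldots,f_i\rangle$. This works, but you should make explicit that at stage~$k$ the remaining stabilizer (matrices $\left(\begin{smallmatrix} I & B \\ 0 & I\end{smallmatrix}\right)$ with $B=B^T$ supported on indices $\ge k+1$) still has exactly enough freedom to kill the $e_{k+1},\ldots,e_n$ components of $x_{n+k+1}$; the key is $\omega(e_{k+1},x_{n+k+1})\ne 0$ from Lemma~\ref{ii from i}. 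The paper instead observes that the Gram matrix $\Omega(x_1,\ldots,x_{2n})$ already has the block form $\left(\begin{smallmatrix}0 & A\\ -A^T & 0\end{smallmatrix}\right)$ with $A$ upper triangular, and invokes Lemma~\ref{Gram}(iii) in one shot to produce the symplectic transformation. Their approach is shorter and avoids the inductive bookkeeping; yours is more constructive and makes the role of the stabilizer explicit.

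For parts~(ii) and~(iii) your argument is exactly what the paper intends by ``refer to the definitions'': you correctly compute that for $1\le j<i\le n$ one has $\omega(e_j,g_i)=\omega(x_j,x_{n+i})$, and the cyclic-distance dichotomy $|j-(n+i)|_N<n$ versus $\ge n$ splits precisely at $j=i+2n-N$. Your worry about antiperiodicity signs is unnecessary here, since both indices $j$ and $n+i$ lie in $\{1,\ldots,2n\}$ and no wraparound occurs.
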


\begin{proof}
Given an $(n, N)$-configuration $(\K x_1, \ldots, \K x_N)$,
consider the Gram matrix $\Omega(x_1, \ldots, x_{2n})$.
Because it has the form noted in the proof of
Lemma~\ref{ii from i}, Lemma~\ref{Gram}(iii)
shows that there is a symplectic transformation
mapping $x_i$ to $e_i$ and $x_{n+i}$ to a multiple of $f_i + g_i$
for $1 \le i \le n$, $g_1$ being zero and
$g_2, \ldots, g_n$ having the desired form.
To complete the proof, rescale the representatives.
For~(ii) and~(iii), refer to the definitions of Lagrangian and generic.
\end{proof}

\begin{prop} \label{freely}
For $N > 2n$, the variety of $(n, N)$-\Lc s is $nN$-dimensional.
$\Sp(2n, \K)$ acts freely on generic configurations,
and so $\cL_{n, N}(\K)$ is $n (N - 2n-1)$-dimensional.
\end{prop}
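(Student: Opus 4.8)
The plan is to establish the three assertions in turn: that the variety of all $(n,N)$-configurations is $nN$-dimensional, that the stabilizer of a generic configuration is as small as it can be, and then to combine these via the orbit-dimension formula. The engine for both substantive steps is the same fact, supplied by Property~(ii) together with Lemma~\ref{Gram}(ii): every block of $2n$ consecutive representatives is a basis of $\K^{2n}$, so any $2n-1$ consecutive representatives are linearly independent.

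For the dimension, I would view the configurations inside $(\K\bP^{2n-1})^N$, which has dimension $N(2n-1)$. By Lemma~\ref{ii from i}, Property~(ii) is the open condition $\omega(x_i,x_{i+n})\neq 0$ and so does not affect the dimension, while Property~(i) is cut out by the $(n-1)N$ equations $\omega(x_i,x_{i+d})=0$, $1\le d\le n-1$, $i\in\Z/N\Z$. The heart of this step is to prove these equations are transverse, yielding a smooth variety of dimension $N(2n-1)-(n-1)N=nN$. Choosing representatives $x_i$ and a tangent vector $(\bar v_1,\dots,\bar v_N)$ with $v_i\in\K^{2n}$ taken modulo $\langle x_i\rangle$, the differential of $\omega(x_i,x_{i+d})$ is $\omega(v_i,x_{i+d})+\omega(x_i,v_{i+d})$, which descends to the projective tangent space precisely because the Lagrangian relations make the ambiguous terms vanish. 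In a hypothetical linear relation $\sum_{i,d}c_{i,d}\bigl(\omega(v_i,x_{i+d})+\omega(x_i,v_{i+d})\bigr)=0$, collecting the coefficient of a fixed $v_j$ gives $\omega(v_j,w_j)=0$ for all $v_j$, where $w_j=\sum_d c_{j,d}\,x_{j+d}-\sum_d c_{j-d,d}\,x_{j-d}$; non-degeneracy of $\omega$ forces $w_j=0$, and the linear independence of $x_{j-n+1},\dots,x_{j+n-1}$ then forces every $c_{i,d}=0$. This is the transversality we want.

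For the free action, suppose $T\in\Sp(2n,\K)$ fixes a generic configuration, so $Tx_i=\lambda_i x_i$ for nonzero scalars $\lambda_i$ (necessarily $N$-periodic, since $x_{i+N}=-x_i$). As $T$ preserves $\omega$, we get $\lambda_i\lambda_j=1$ whenever $\omega(x_i,x_j)\neq 0$. Genericity makes $\omega(x_i,x_{i+d})\neq 0$ for every cyclic distance $n\le d\le N-n$, and because $N>2n$ both $d=n$ and $d=n+1$ are admissible; comparing $\lambda_i\lambda_{i+n}=1$ with $\lambda_i\lambda_{i+n+1}=1$ yields $\lambda_{i+n}=\lambda_{i+n+1}$ for all $i$, so all $\lambda_i$ coincide with a single $\lambda$ satisfying $\lambda^2=1$. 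Hence $T=\pm\Id$, and the stabilizer is exactly the center $\{\pm\Id\}$. Since $-\Id$ fixes every configuration of lines, the effective action is that of $\mathrm{PSp}(2n,\K)=\Sp(2n,\K)/\{\pm\Id\}$, which is free; in particular each orbit has dimension $\dim\Sp(2n,\K)=n(2n+1)$.

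Combining the steps, the generic configurations form an open, hence $nN$-dimensional, subvariety on which the orbits all have dimension $n(2n+1)$, so $\cL_{n,N}(\K)$ has dimension $nN-n(2n+1)=n(N-2n-1)$. I expect the main obstacle to be the transversality computation of the second paragraph: the index bookkeeping in the linear-dependence relation and the clean invocation of the independence of $2n-1$ consecutive representatives, which is exactly the place where Property~(ii) is indispensable. By comparison the free-action step is routine once the relations $\lambda_i\lambda_{i+n}=1$ are available, the only points requiring care being that $N>2n$ guarantees two consecutive admissible distances and that the omnipresent factor $-\Id$ must be absorbed into the center.
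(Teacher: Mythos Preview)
Your proof is correct and takes a genuinely different route from the paper's.

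For the dimension count, the paper argues by sequential construction: it chooses $X_1, X_2, \ldots, X_N$ one at a time and tallies the degrees of freedom at each step, obtaining $nN$ in total.  You instead embed the configurations in $(\K\bP^{2n-1})^N$ and prove that the $(n-1)N$ Lagrangian equations are transverse, using the linear independence of $2n-1$ consecutive representatives.  Your approach is more work but yields smoothness directly; the paper's counting argument is quicker but more heuristic.

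For the free action, the paper reduces to the standard normal form of Lemma~\ref{standard}, argues that a stabilizing $T$ must be block-diagonal $\left(\begin{smallmatrix} D & 0 \\ 0 & D^{-1} \end{smallmatrix}\right)$, and then uses specific nonvanishing conditions from genericity to force $D$ scalar and finally trivial.  Your argument is cleaner: from $T x_i = \lambda_i x_i$ and the symplecticity relation $\lambda_i \lambda_j = 1$ whenever $\omega_{ij} \neq 0$, you use only the two admissible distances $n$ and $n+1$ to get $\lambda_{i+n} = \lambda_{i+n+1}$, hence all $\lambda_i$ equal and square to~$1$.  This avoids the normal form entirely and makes explicit that the stabilizer is exactly the center $\{\pm\Id\}$, a point the paper's phrasing ``acts freely'' leaves implicit.
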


\begin{proof}
Consider the process of constructing an $(n, N)$-configuration
by choosing first $X_1$, then $X_2$, and so on to $X_N$.
Count the number of degrees of freedom
available in choosing each $X_i$ as follows.
There are $2n-1$ degrees of freedom for $X_1$,
as it is simply an arbitrary line.
There are $2n-2$ degrees of freedom for $X_2$,
as $\langle X_1, X_2 \rangle$ must be isotropic,
and $2n-3$ for $X_3$, as $\langle X_1, X_2, X_3 \rangle$
must be isotropic.  Continuing, for $i \le n$
we find that there are $2n - i$ degrees for $X_i$.
For $n \le i \le N-n+1$ there are $n$ degrees for $X_i$,
as the only constraints arise from the requirement that
$\langle X_{i-n+1}, \ldots, X_i \rangle$ be isotropic.

There are $n-1$ degrees for $X_{N-n+2}$,
as in addition to the above isotropy requirement,
$\langle X_{N-n+2}, X_1 \rangle$ must be isotropic.
Continuing, for $N-n+1 \le i \le N$ there are $N+1-i$ degrees for $X_i$.
In particular, all of the $X_i$ can be chosen,
and there are a total of $nN$ degrees of freedom
in choosing the configuration.

To complete the proof, it suffices to prove that any symplectic
transformation $T$ stabilizing a generic standard configuration
$(\K x_1, \ldots, \K x_N)$ is the identitity $\Id$.
Keeping in mind that $T$ must stabilize $\K x_i$
but not necessarily $x_i$, we find that it must be of the form
$\left(\begin{smallmatrix} D & 0 \\ 0 & D^{-1} \end{smallmatrix}\right)$
for some diagonal matrix $D$.  Because $N > 2n$,
$\omega(e_{i-1}, g_i) \not= 0$, forcing $D$ to be scalar.
Finally, genericity implies that $\omega(e_n, x_{2n+1})$
and $\omega(f_1, x_{2n+1})$ are both non-zero.
Hence the condition that $T$ stabilize $\K x_{2n+1}$ forces $D = \Id$.
\end{proof}

\begin{cor}
$\cL_{n, N}(\R)$ and $\cL_{n, N}(\C)$ are smooth manifolds.
\end{cor}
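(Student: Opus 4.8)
The plan is to present $\cL_{n,N}(\K)$ as the quotient of a smooth manifold by a free and proper group action and then to invoke the Quotient Manifold Theorem. Let $M \subset (\K\bP^{2n-1})^N$ be the set of generic $(n,N)$-configurations. Since genericity is the non-vanishing of the symplectic products $\omega_{ij}$ with $|i-j|_N \ge n$, the set $M$ is open in the variety $V \subset (\K\bP^{2n-1})^N$ cut out by the Lagrangian equations $\omega_{ij}=0$ for $|i-j|_N < n$. The first task is to check that $V$ is smooth of dimension $nN$ along $M$. For this I would reuse the inductive construction from the proof of Proposition~\ref{freely}: if $W=\langle X_{i-n+1},\dots,X_{i-1}\rangle$ is isotropic of dimension $n-1$, then the lines $X_i$ completing the Lagrangian window are precisely those lying in the projectivized symplectic orthogonal $\bP(W^\omega)\cong\K\bP^{n}$ (with larger fibres at the initial steps and with the cyclic closure constraints at the final steps). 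At a generic configuration these forgetful maps are submersions with smooth fibres of the dimensions tallied in Proposition~\ref{freely}, and the wrap-around constraints stay transverse because the spanning lines have non-degenerate Gram matrix by Lemma~\ref{Gram}(ii). Hence $M$ is a smooth $nN$-manifold, and the algebraic (hence smooth) action of $\Sp(2n,\K)$ on it is free by Proposition~\ref{freely}.

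The hard part is properness, which is a genuine issue because $\Sp(2n,\K)$ is non-compact over both $\R$ and $\C$. Here I would combine Lemma~\ref{Gram}(iii) with genericity. Suppose $m_k\to m$ in $M$ and $g_k\cdot m_k\to m'$ with $g_k\in\Sp(2n,\K)$. Choosing continuous local representatives $x_i(\cdot)$, each $g_k$ sends $x_i(m_k)$ to a scalar multiple $\mu_i^{(k)}x_i(g_k m_k)$, and symplecticity yields $\omega_{ij}(m_k)=\mu_i^{(k)}\mu_j^{(k)}\,\omega_{ij}(g_k m_k)$ for every pair with $|i-j|_N\ge n$. For such pairs the right-hand entries converge to non-zero limits, so each product $\mu_i^{(k)}\mu_j^{(k)}$ converges to a finite non-zero value. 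By genericity the non-vanishing products link the indices $1,\dots,N$ into a single connected web — the steps $\pm n$ and $\pm(n+1)$ already suffice since $N>2n$ — which moreover contains an odd closed walk, exactly the sequence of indices produced in the proof of Proposition~\ref{OEI}(iii). The resulting over-determined, consistent system in the quantities $\log|\mu_i^{(k)}|$ then forces every $|\mu_i^{(k)}|$ to be bounded above and below. Writing $g_k=G_k\,\mathrm{diag}(\mu_1^{(k)},\dots,\mu_{2n}^{(k)})\,F_k^{-1}$, where $F_k$ and $G_k$ are the framing matrices of $x_1(m_k),\dots,x_{2n}(m_k)$ and of $x_1(g_k m_k),\dots,x_{2n}(g_k m_k)$ — both convergent, and invertible by Property~(ii) — we conclude that a subsequence of the $g_k$ converges in $\Sp(2n,\K)$. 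This is properness.

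With $M$ smooth and the $\Sp(2n,\K)$-action smooth, free, and proper, the Quotient Manifold Theorem gives that $\cL_{n,N}(\K)=M/\Sp(2n,\K)$ is a smooth manifold of dimension $n(N-2n-1)$, and that $M\to\cL_{n,N}(\K)$ is a principal $\Sp(2n,\K)$-bundle. Over $\R$ this is the real smooth statement; over $\C$ the same charts are holomorphic (the action and the framing matrices are holomorphic), so $\cL_{n,N}(\C)$ is a complex manifold. I expect properness to be the main obstacle, and within it the only delicate point is the web argument bounding the scalars $\mu_i^{(k)}$ — which is precisely where the genericity hypothesis is indispensable, mirroring its role in the freeness proof of Proposition~\ref{freely} and in the sign invariants of Proposition~\ref{OEI}.
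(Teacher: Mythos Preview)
The paper provides no proof of this corollary; it is stated immediately after Proposition~\ref{freely} with no argument, evidently as a routine consequence of the free $\Sp(2n,\K)$-action established there. Your proposal supplies precisely the argument the paper omits, via the Quotient Manifold Theorem, and it is essentially correct.

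Your properness argument is the substantive step, and it is sound. The key combinatorial point---that the graph on $\{1,\dots,N\}$ with edges $\{i,j\}$ for $|i-j|_N\ge n$ is connected (steps $n$ and $n{+}1$ are coprime) and non-bipartite (via the odd closed walk produced in the proof of Proposition~\ref{OEI}(iii))---is exactly what is needed to bound the individual $|\mu_i^{(k)}|$ from the bounded products $|\mu_i^{(k)}\mu_j^{(k)}|$; the passage to a convergent subsequence of the $g_k$ via the framing matrices $F_k,G_k$ is then routine. Your smoothness of $M$ is argued somewhat loosely; a cleaner alternative already present in the paper is to use the standard representatives of Lemma~\ref{standard}: since the stabilizer of a generic configuration is trivial (proof of Proposition~\ref{freely}), the standard form is unique, and its defining data $(g_2,\dots,g_n,x_{2n+1},\dots,x_N)$ furnish explicit coordinates on $\cL_{n,N}(\K)$ without a separate transversality check for the cyclic closure constraints. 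Either way, you have filled a gap the paper left to the reader.
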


\subsection{The case $N > 2n+2$}

We conclude this section with a few remarks on configurations with $N > 2n+2$.
We begin with a corollary of Lemma~\ref{ii from i}.  Let us single out the \scr s
\begin{equation} \label{gammas}
   \gamma_{ij} := c_{i, j+n, j, i+n}
   = \frac{\omega_{ij}\, \omega_{i+n, j+n}}
   {\omega_{i, i+n}\, \omega_{j, j+n}}.
\end{equation}

\begin{cor} \label{gamma ij}
Let $(\K x_1, \ldots, \K x_N)$ be an $(n, N)$-\Lc.
Then $\gamma_{ij}$ is defined for all~$i$ and~$j$,
and it is~$0$ for $|i-j|_N < n$ and~$1$ for $|i-j|_N = n$.
The configuration is generic if and only if
$\gamma_{ij}$ is non-zero whenever $|i-j|_N \ge n$.
\end{cor}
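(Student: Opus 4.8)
The plan is to prove all three assertions by unwinding the definition~(\ref{gammas}) and invoking Lemma~\ref{ii from i} together with the Lagrangian and genericity conditions; there is no deep content here, only careful bookkeeping.

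First I would establish that $\gamma_{ij}$ is always defined. Reading off~(\ref{gammas}), the denominator is $\omega_{i, i+n}\,\omega_{j, j+n}$, and Lemma~\ref{ii from i} guarantees that $\omega_{k, k+n} \neq 0$ for every~$k$ in any $(n, N)$-\Lc. Hence the denominator never vanishes, so $\gamma_{ij}$ is well defined for all $i$ and~$j$. For the two special values I proceed as follows. When $|i-j|_N < n$ the Lagrangian condition forces $\omega_{ij} = 0$ (and in fact $\omega_{i+n, j+n} = 0$ as well, since $|(i+n)-(j+n)|_N = |i-j|_N$), so the numerator vanishes and $\gamma_{ij} = 0$. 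When $|i-j|_N = n$ I would use the antiperiodicity relations~(\ref{omega relations}) and the invariance of the $\omega_{ij}$ under shifting indices by~$N$ to reduce to the two representative cases $j = i+n$ and $j = i-n$. In each case the numerator and denominator of~(\ref{gammas}) reduce to the same product of two factors $\omega_{i, i+n}\,\omega_{i+n, i+2n}$, up to signs that occur in matched pairs, so everything cancels and $\gamma_{ij} = 1$.

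Finally I would treat the genericity criterion. Since the denominator of~(\ref{gammas}) is nonzero, we have $\gamma_{ij} \neq 0$ if and only if both $\omega_{ij} \neq 0$ and $\omega_{i+n, j+n} \neq 0$. If the configuration is generic, then by definition $\omega_{ij} \neq 0$ for every pair with $|i-j|_N \geq n$; applying this to the shifted pair $(i+n, j+n)$, which has the same $N$-cyclic distance, gives $\omega_{i+n, j+n} \neq 0$ as well, whence $\gamma_{ij} \neq 0$. Conversely, if $\gamma_{ij} \neq 0$ for all pairs with $|i-j|_N \geq n$, then in particular $\omega_{ij} \neq 0$ for all such pairs, which is precisely the definition of generic.

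I do not expect any serious obstacle: the argument is entirely a matter of substituting definitions. The only mild subtlety is the sign bookkeeping from antiperiodicity in the $|i-j|_N = n$ computation, and even there the signs arise in cancelling pairs, so the verification is routine.
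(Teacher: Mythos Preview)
Your proposal is correct and matches the paper's intent: the corollary is stated without proof, being an immediate consequence of Lemma~\ref{ii from i} and the definitions, and your argument spells this out exactly. One minor imprecision: in the case $j = i-n$ the numerator and denominator are $\omega_{i-n,i}\,\omega_{i,i+n}$ rather than $\omega_{i,i+n}\,\omega_{i+n,i+2n}$, but as you note the signs cancel and the ratio is~$1$, so the conclusion stands.
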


Observe that $\gamma_{i+N, j} = \gamma_{ji} = \gamma_{ij}$.
For $N = 2n+2$, all non-trivial \scr s are $\gamma_{ij}$'s, as
$c_i = \gamma_{i+1, i-n}$.
This is not true for $N > 2n+2$: for example, for $(2,7)$-configurations,
$c_{1, 2, 4, 5}$ is not a $\gamma_{ij}$.
However, we do have the relation
\begin{equation*}
   c_{i_1 i_2 j_1 j_2} c_{i_1+n, i_2+n, j_1+n, j_2+n} =
   \frac{\gamma_{i_1 j_1}\, \gamma_{i_2 j_2}}
   {\gamma_{i_1 j_2}\, \gamma_{i_2 j_1}}.
\end{equation*}

\section{The main results} \label{Main Geo Results}

In this section we describe the moduli space $\cL_{n, N}(\K)$
of generic $(n, N)$-configurations over $\K = \R$ or $\C$
for $N$ equal to $2n$, $2n+1$, and $2n+2$.
In the first two cases it is trivial and is described in Proposition~\ref{easy}.
Let us mention that a related result is proven
in~\cite{OS} for three lines in $\K^{2n}$.

The first non-trivial case, $\cL_{n, 2n+2}(\K)$,
is described in Theorem~\ref{Main Thm}:
there are $n+1$ cross-ratios, which satisfy a single relation.
In the case that $n$ is even and $\K = \C$, Theorem~\ref{EquivThm} provides
a combinatorial interpretation of this relation.

\subsection{The cases $N=2n$ and $N=2n+1$} \label{2n & 2n+1}

\begin{prop} \label{easy}

\begin{enumerate}
\item[(i)]
For $N = 2n$ or $2n+1$, all $(n, N)$-Lagrangian configurations are generic.

\smallbreak \item[(ii)]
$\cL_{n, 2n}(\K)$ is a single point for $\K = \R$ or $\C$.

\smallbreak \item[(iii)]
$\cL_{n, 2n+1}(\R)$ consists of two opposite points,
and $\cL_{n, 2n+1}(\C)$ is a single point.

\end{enumerate}
\end{prop}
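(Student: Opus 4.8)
The three parts have increasing difficulty, and I would treat them in order, reducing everything to the Gram matrix of a configuration together with the rescaling freedom in the choice of representatives.

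For part~(i), the point is that when $N = 2n$ or $2n+1$ the largest possible $N$-cyclic distance is $n$, so genericity only constrains pairs $(i,j)$ with $|i-j|_N = n$ exactly. For $N = 2n$ these are precisely the antipodal pairs $(i, i+n)$, and non-isotropy of $\langle X_i, X_{i+n}\rangle$ is exactly the condition $\omega_{i,i+n} \neq 0$ that Lemma~\ref{ii from i} identifies with Property~(ii); hence every configuration is generic. For $N = 2n+1$ there is a second family of distance-$n$ pairs, namely $(i, i+n+1)$. Here I would invoke antiperiodicity: since $i+n+1 = (i-n)+N$, we have $x_{i+n+1} = -x_{i-n}$, so $\omega_{i,i+n+1} = \omega_{i-n,\,i}$, which is again of the form $\omega_{k,k+n}$ and so non-zero by Lemma~\ref{ii from i}. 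Thus genericity is automatic in both cases.

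For part~(ii), Lemma~\ref{standard}(i) lets me assume standard representatives. When $N = 2n$, Lemma~\ref{standard}(ii) gives $\omega(e_j, g_i) = 0$ for all $j < i$; since $g_i \in \langle f_1, \dots, f_{i-1}\rangle$ and $\omega(e_j, f_k) = \delta_{jk}$, this forces every $g_i = 0$. Hence the only standard configuration is the standard symplectic basis $x_i = e_i$, $x_{n+i} = f_i$, which one checks directly is a valid $(n,2n)$-\Lc\ (each window of $n$ consecutive lines spans either $\langle e_\ast\rangle$, $\langle f_\ast\rangle$, or a mixed set $\langle e_a,\dots,f_b\rangle$ with $a>b$, all isotropic). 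Therefore all configurations are equivalent and $\cL_{n,2n}(\K)$ is a single nonempty point.

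For part~(iii), part~(i) shows all configurations are generic, so Proposition~\ref{freely} gives $\dim \cL_{n,2n+1}(\K) = n(2n+1-2n-1) = 0$; it remains to count points. Since $N = 2n+1$, every nonzero Gram entry lies at distance $n$, and by the computation above $\omega_{i,i+n+1} = \omega_{i-n,\,i}$ reduces to the diameters $\omega_{i,i+n}$, so the whole Gram matrix is determined by these $N$ numbers. Rescaling $x_i \mapsto \lambda_i x_i$ (with $\lambda$ $N$-periodic) multiplies $\omega_{i,i+n}$ by $\lambda_i \lambda_{i+n}$, and since $\GCD(n,2n+1) = 1$ the map $i \mapsto i+n$ is a single $N$-cycle. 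Solving $\lambda_i \lambda_{i+n} = 1/\omega_{i,i+n}$ around this odd cycle collapses to extracting one square root $\lambda_{i_0}^2 = P$, with $P$ an explicit signed product of the $\omega_{i,i+n}$. Over $\C$ this root always exists, so each configuration rescales to the fixed Gram matrix with all diameters equal to $1$, and Lemma~\ref{Gram}(iii) produces a symplectic equivalence: a single point. Over $\R$, solvability requires $P > 0$, which is governed by $\sgn\bigl(\prod_i \omega_{i,i+n}\bigr)$, an invariant by Lemma~\ref{sgn}; configurations with the same sign rescale to a common real Gram matrix and hence are equivalent, while Proposition~\ref{OEI}(iii) shows a configuration and its opposite carry opposite signs. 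Hence there are exactly two classes, opposite to one another. I expect the main obstacle to be precisely this last step: carefully tracking the antiperiodic signs through the rescaling and identifying the residual real obstruction with the sign invariant that distinguishes opposite configurations.
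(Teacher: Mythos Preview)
Your proof is correct. Parts~(i) and~(ii) match the paper's argument essentially verbatim, just with more detail spelled out. Part~(iii), however, takes a genuinely different route. The paper works in explicit coordinates: it uses Lemma~\ref{standard} to reduce to $x_i = e_i$, $x_{n+i} = f_i + b_i f_{i-1}$, then applies a diagonal symplectic transformation to force all $b_i = 1$; the last vector $x_{2n+1}$ is then determined up to a single nonzero scalar $b_0$, which a final scaling $e_i \mapsto |b_0|^{1/2} e_i$, $f_i \mapsto |b_0|^{-1/2} f_i$ reduces to $\pm 1$. You instead bypass coordinates entirely: you observe that for $N = 2n+1$ the whole Gram matrix is encoded in the $2n+1$ subdiameters $\omega_{i,i+n}$, normalize them all to~$1$ by solving a cyclic rescaling problem along the single $(2n+1)$-cycle $i \mapsto i+n$, and then invoke Lemma~\ref{Gram}(iii). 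The paper's approach yields explicit representatives (used in the corollary immediately following), while yours is coordinate-free and makes the role of the sign invariant $\sgn\bigl(\prod_i \omega_{i,i+n}\bigr)$ transparent from the start rather than emerging as the sign of a residual parameter. Your closing worry about antiperiodic sign-tracking is overly cautious: the only parity that matters is that the cycle has odd length $2n+1$, which you already use.
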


\begin{proof}
Part~(i) is clear from Lemma~\ref{ii from i}
and the fact that $|i-j|_{2n+1}$ never exceeds~$n$.
For~(ii), Lemma~\ref{standard} shows that all
configurations are equivalent to
$(\K e_1, \ldots, \K e_n, \K f_1, \ldots, \K f_n)$.

For~(iii), Lemma~\ref{standard} shows that any configuration
is equivalent to some $(\K x_1, \ldots, \K x_{2n+1})$
with $x_i = e_i$ for $1 \le i \le n$, $x_{n+1} = f_1$,
and $x_{n+i} = f_i + b_i f_{i-1}$ for $2 \le i \le n$,
where the $b_i$ are non-zero scalars.
Define $b_1 := 1$, apply the diagonal symplectic transformation
$e_i \mapsto e _i \prod_1^i b_j^{-1}$ and $f_i \mapsto f_i \prod_1^i b_j$,
and rescale to deduce that we may assume $b_i =1$ for all~$i$.

Combine the Lagrangian condition with genericity
to see that $x_{2n+1}$ can be rescaled to take the form
$f_n + b_0 \sum_1^n (-1)^i e_i$ for some scalar $b_0 \not= 0$.
For $\K = \R$ and $b_0 > 0$ or $\K = \C$,
apply the symplectic transformation
$e_i \mapsto b_0^{1/2} e_i$ and $f_i \mapsto b_0^{-1/2} f_i$
and rescale to arrive at $x_{2n+1} = f_n + \sum_1^n (-1)^i e_i$.
For $\K = \R$ and $b_0 < 0$, use $(-b_0)^{1/2}$ in place of $b_0^{1/2}$
to arrive at $x_{2n+1} = f_n - \sum_1^n (-1)^i e_i$.
By Proposition~\ref{OEI}, the two signs of $b_0$
give opposite real equivalence classes.
\end{proof}

Combining this argument with Section~\ref{OCSI} yields the following corollary.
For $\e = \pm 1$, consider the $(n, 2n+1)$-configurations with representatives
\begin{equation} \label{2n+1 configs}
   \Bigl( e_1, e_2, \ldots, e_n, \quad
   \e f_1,\, \e (f_1 + f_2),\, \e (f_2 + f_3), \ldots, \e (f_{n-1} + f_n), \quad
   \e f_n + \sum_i^n (-1)^i e_i \Bigr).
\end{equation}
Observe that these representatives have $\omega_{i, n+i} = \e$ for all~$i$.

\begin{cor}

\begin{enumerate}
\item[(i)]
The unique element of $\cL_{n, 2n+1}(\C)$
is the class of~(\ref{2n+1 configs}) for $\e = 1$.

\smallbreak \item[(ii)]
The two opposite elements of $\cL_{n, 2n+1}(\R)$
are the classes of~(\ref{2n+1 configs}) for $\e = \pm 1$.

\smallbreak \item[(iii)]
An arbitrary $(n, 2n+1)$-\Lc\
$(\R x_1, \ldots, \R x_{2n+1})$ over $\R$
is equivalent to~(\ref{2n+1 configs}) with
$\e = \sgn \bigl( \prod_{i=0}^{2n} \omega_{i, n+i} \bigr)$.

\end{enumerate}
\end{cor}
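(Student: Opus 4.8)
The plan is to leverage the structural results already established in Proposition~\ref{easy} and Proposition~\ref{OEI}, treating the corollary as a matter of identifying the explicit representatives~(\ref{2n+1 configs}) with the abstract equivalence classes. First I would verify that the tuple in~(\ref{2n+1 configs}) is genuinely an $(n, 2n+1)$-\Lc, which by Lemma~\ref{ii from i} and Proposition~\ref{easy}(i) reduces to checking that $\omega_{i, i+n} \neq 0$ for all~$i$; the stated fact that $\omega_{i, n+i} = \e$ handles exactly this, once I confirm it by direct computation using the standard form of $\omega$ on the explicit basis vectors $e_j, f_j + f_{j+1}$, and on the final vector $\e f_n + \sum_i^n (-1)^i e_i$. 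This is the routine bookkeeping step.

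Next, for parts~(i) and~(ii), I would match~(\ref{2n+1 configs}) against the normal form produced in the proof of Proposition~\ref{easy}(iii). That proof shows every configuration is equivalent to one with $x_i = e_i$, $x_{n+i} = f_i + f_{i-1}$ (after setting all $b_i = 1$), and $x_{2n+1} = f_n \pm \sum_1^n (-1)^i e_i$, the two signs giving the two opposite real classes and the single complex class. The representatives~(\ref{2n+1 configs}) for $\e = 1$ are precisely this normal form (noting $\e(f_{i-1} + f_i)$ with $\e=1$ reproduces $f_i + f_{i-1}$), so~(i) is immediate. For~(ii), I would observe that replacing every $x_j$ by $Q x_j$ sends the $\e = 1$ representatives to the $\e = -1$ representatives, since $Q$ fixes the $e_i$ and negates the $f_i$; combined with Proposition~\ref{OEI}(ii) (here $N/\GCD(n, N) = (2n+1)/\GCD(n, 2n+1) = 2n+1$ is odd, so opposite classes are inequivalent) this confirms the two classes are exactly the opposites for $\e = \pm 1$.

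For part~(iii), the key tool is Lemma~\ref{sgn}: the quantity $\sgn\bigl(\prod_{i=0}^{2n} \omega_{i, n+i}\bigr)$ is a genuine invariant of the configuration because the index sequence $0, n, 2n, 3n, \ldots$ taken modulo $2n+1$ is a closed loop (as $\GCD(n, 2n+1) = 1$, the step~$n$ generates all residues and returns to the start after $2n+1$ steps). I would compute this invariant on~(\ref{2n+1 configs}): since each factor equals~$\e$ and there are $2n+1$ factors, the product is $\e^{2n+1} = \e$, so its sign is $\e$. Because any real configuration is equivalent to~(\ref{2n+1 configs}) for exactly one choice of $\e$ by~(ii), and the sign invariant distinguishes these two classes, the value of $\sgn\bigl(\prod \omega_{i, n+i}\bigr)$ on an arbitrary configuration must equal the $\e$ of its equivalence class.

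The main obstacle I anticipate is purely indexing: I must be careful that the closed loop $j_s = sn$ for $s = 0, \ldots, 2n$ in Lemma~\ref{sgn} genuinely satisfies $j_0 \equiv j_r \pmod{2n+1}$ and that each consecutive $\omega_{sn, (s+1)n}$ is one of the nonzero symplectic products $\omega_{i, i+n}$ after reduction modulo $2n+1$ and antiperiodic wraparound (which may introduce sign factors via~(\ref{omega relations})). I would check that these wraparound signs are consistent with $\prod_{i=0}^{2n}\omega_{i, n+i}$ as written, rather than introducing a stray global sign; this is the one spot where a careless computation could flip the conclusion, so it deserves explicit verification rather than being waved through.
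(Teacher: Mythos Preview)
Your proposal is correct and follows essentially the same approach as the paper, which simply states that the corollary follows by ``combining this argument with Section~\ref{OCSI}'' without giving further details. Your plan---matching~(\ref{2n+1 configs}) against the normal form from the proof of Proposition~\ref{easy}(iii), invoking Proposition~\ref{OEI}(ii) for the inequivalence of opposites, and using Lemma~\ref{sgn} for the sign invariant in part~(iii)---is exactly this combination spelled out, and your caution about the antiperiodic wraparound signs in the product $\prod \omega_{i, n+i}$ is well placed (note that adding $N$ to \emph{both} indices leaves $\omega_{ij}$ unchanged, so the reindexing from $\prod_s \omega_{sn,(s+1)n}$ to $\prod_i \omega_{i,i+n}$ introduces no stray sign).
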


\subsection{The first non-trivial case: $N=2n+2$} \label{2n+2}

We now give the description of $\cL_{n, 2n+2}(\K)$,
one of our main results.
Recall that the \scr s $c_i$ given in~(\ref{L(n, 2n+2) CRs})
and~(\ref{L(n, 2n+2) CRs form 2}) are $(n+1)$-periodic, are
the only non-trivial cross-ratios on $(n, 2n+2)$-configurations,
and are non-zero on generic configurations.

Let us use the standard notation $\lfloor x \rfloor$
for the integer part of a real number $x$.
In order to state the result, we define an index set
$\cI_n(r) \subset \{0, \ldots, n\}^r$
for $1 \le r \le \lfloor (n+1)/2 \rfloor$:
\begin{equation*}
   \cI_n(r) := \bigl\{ (i_1, \ldots, i_r):
   \mbox{\rm $0 \le i_1$, $i_s + 1 < i_{s+1}$ for $1 \le s < r$,
   $i_r \le n$, and if $i_1 = 0$, then $i_r < n$} \bigr\}.
\end{equation*}
Observe that this definition may be rephrased as follows:
$\cI_n(r)$ is the set of strictly increasing $r$-tuples
$0 \le i_1 < \cdots < i_r \le n$ such that the $(n+1)$-cyclic distances
$|i_s - i_{s'}|_{n+1}$ exceed~$1$ for all $s \not= s'$.
Let us give the initial cases explicitly:

\begin{itemize}

\smallbreak \item
For $n \ge 1$, $\cI_n(1) = \{0, 1, \ldots, n\}$.

\smallbreak \item
For $n \ge 3$, $\cI_n(2) = \{(0,2), (0,3), \ldots, (0, n-1);
(1,3), \ldots, (1, n); \ldots; (n-2, n) \}$.

\smallbreak \item
$\cI_5(3) = \{(0,2,4), (1,3,5)\}$, and
$\cI_6(3) = \{(0,2,4), (0,2,5), (0,3,5), (1,3,5), (1,3,6), (1,4,6)\}$.

\end{itemize}

\begin{thm}
\label{Main Thm}

\begin{enumerate}
\item[(i)]
On $\cL_{n, 2n+2}(\K)$, the $n+1$ \scr s
$c_0, \ldots, c_n$ satisfy the relation
\begin{equation} \label{GenEq}
0 = 1 + \sum_{r=1}^{\lfloor (n+1)/2 \rfloor}
\sum_{\cI_n(r)} \frac{(-1)^r}{c_{i_1} c_{i_2} \cdots c_{i_r}}.
\end{equation}

\smallbreak \item[(ii)]
$(c_0, \ldots, c_n)$ is a coordinate ring on $\cL_{n, 2n+2}(\K)$:
any two generic $(n, 2n+2)$-equivalence classes
with the same cross-ratios are equal for $\K = \C$,
and either equal or opposite for $\K = \R$.

\smallbreak \item[(iii)]
(\ref{GenEq}) is the only relation on the cross-ratios:
if $c_0, \ldots, c_n$ are arbitrary non-zero scalars in $\K$ satisfying~(\ref{GenEq}),
then they are the cross-ratios of some generic $(n, 2n+2)$-configuration over $\K$.

\end{enumerate}
\end{thm}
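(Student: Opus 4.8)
The plan is to reduce the entire theorem to a concrete normal-form computation. By Lemma~\ref{standard}, every $(n,2n+2)$-configuration is equivalent to one with standard representatives $x_1 = e_1, \ldots, x_n = e_n$, $x_{n+1} = f_1$, $x_{n+i} = f_i + g_i$ with $g_i \in \langle f_1,\ldots,f_{i-1}\rangle$. For $N = 2n+2$ there remain exactly two further representatives $x_{2n+1}$ and $x_{2n+2}$ to determine, subject to the Lagrangian conditions $\omega_{ij} = 0$ for $|i-j|_{2n+2} < n$ together with antiperiodicity $x_{i+2n+2} = -x_i$. The first step is therefore to parametrize this residual freedom explicitly: solve the Lagrangian constraints to write $x_{2n+1}$ and $x_{2n+2}$ (and hence all the $g_i$) in terms of a minimal set of free scalars, ideally the $\omega_{i,i+n+1}$ themselves, which are the symplectic diameters appearing in the numerators of the $c_i$.

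For part~(i), I would compute the $c_i$ directly from the standard form and then establish~(\ref{GenEq}) as an algebraic identity among them. The cleanest route is to exploit the structure already flagged in the introduction: the relation is the Pfaffian of a Gram matrix and is governed by tridiagonal determinants and continuants. Concretely, I would assemble the $\omega$-Gram matrix $\Omega(x_0, \ldots, x_{2n+1})$ of $2n+2$ consecutive representatives. Its entries vanish except near the antidiagonal band (by the Lagrangian condition), giving it an almost-tridiagonal block structure. Because these $2n+2$ vectors lie in a $2n$-dimensional space, Lemma~\ref{Gram}(i) forces $\det\bigl(\Omega(x_0,\ldots,x_{2n+1})\bigr) = 0$, or equivalently its Pfaffian vanishes. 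Expanding this Pfaffian along the combinatorial structure of non-crossing matchings yields precisely the sum over the index sets $\cI_n(r)$, with the sign $(-1)^r$ and the denominators $c_{i_1}\cdots c_{i_r}$ emerging after dividing through by the product of diameters. The definition of $\cI_n(r)$ as strictly increasing $r$-tuples with $(n+1)$-cyclic gaps exceeding~$1$ is exactly the indexing of non-overlapping ``diametric'' pairs in such an expansion, which strongly suggests this is the intended mechanism.

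For parts~(ii) and~(iii) I would argue by reconstruction. Given scalars $c_0, \ldots, c_n$ satisfying~(\ref{GenEq}), I would invert the parametrization from the first step to build explicit standard representatives $x_1, \ldots, x_{2n+2}$ realizing these cross-ratios; the single relation~(\ref{GenEq}) is exactly what is needed to close up the antiperiodicity condition $x_{2n+2} = -x_0$ consistently, so that a genuine configuration exists. For uniqueness in~(ii), once two generic configurations share the same $c_i$, I would show their standard forms coincide up to the residual rescaling and sign ambiguity; here Lemma~\ref{Gram}(iii) supplies the symplectic transformation matching them, once their full Gram matrices are shown equal. The real-versus-complex dichotomy and the ``opposite'' alternative over $\R$ come from the sign and square-root choices analyzed in Proposition~\ref{OEI} and Section~\ref{OCSI}: complex scalars absorb all sign ambiguities, whereas over $\R$ one is left with exactly the opposite equivalence class.

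The main obstacle will be part~(i), specifically identifying the vanishing Pfaffian expansion with the precise combinatorial sum~(\ref{GenEq}). The bookkeeping of which minors survive the Lagrangian vanishing, and verifying that the surviving terms are indexed bijectively by $\cI_n(r)$ with the correct signs, is where the real work lies; the natural tool is an induction on $n$ via the continuant recursion, peeling off one diameter at a time and matching it against the recursive structure of the index sets. The reconstruction in~(iii) is conceptually routine once~(i) is in hand, since the relation is engineered to be exactly the closure condition, but care is needed to verify genericity (all $\omega_{i,i+n} \neq 0$) of the configuration produced, which follows from the hypothesis that all $c_i$ are non-zero.
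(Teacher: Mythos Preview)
Your proposal is correct and follows essentially the same route as the paper: part~(i) is exactly the vanishing of the Pfaffian of the Gram matrix $\Omega(x_0,\ldots,x_{2n+1})$ expanded via its tridiagonal block structure (the paper's Propositions~\ref{tridiag det} and~\ref{pf Omega} and Lemma~\ref{pf det lemma}), and part~(iii) is an explicit construction in which the single closure condition $\omega(x_0,x_{2n+1})=0$ reduces precisely to~(\ref{GenEq}). The one tactical difference is in~(ii): rather than comparing standard forms, the paper finds explicit rescaling factors $\lambda_i$ (separately for $n$ even and odd) that force the two Gram matrices to coincide, then invokes Lemma~\ref{Gram}(iii); your standard-form approach would also work but the paper's rescaling is more direct and makes the real-versus-complex sign analysis cleaner.
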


Thus $\cL_{n,2n+2}(\K)$ may be viewed as
a dense open subset of the algebraic hypersurface
in $\K^{n+1}$ defined by multiplying~(\ref{GenEq}) by $c_0 \cdots c_n$.
We prove Theorem~\ref{Main Thm} in Section~\ref{PMT Sec}.

\begin{exs}
\begin{enumerate}

\item[(a)]
For $\cL_{1,4}(\K)$, the relation on
the two cross-ratios of quadrilaterals in $\K \bP^1$
simply relates two forms of the classical cross-ratio:
$$ 
\frac{1}{c_0} + \frac{1}{c_1} = 1.
$$

\smallbreak \item[(b)]
For $\cL_{2, 6}(\K)$, the relation
on the three cross-ratios $c_0$, $c_1$, and $c_2$
of Legendrian hexagons in $\K \bP^3$ was given in~(\ref{FirstEq}).

\smallbreak \item[(c)]
The moduli space $\cL_{3, 8}(\K)$ of Legendrian octagons in $\K \bP^5$
is parametrized by the four cross-ratios $c_0$, $c_1$, $c_2$, and $c_3$,
subject to the relation
\begin{equation*}
   \frac{1}{c_0} + \frac{1}{c_1} + \frac{1}{c_2} + \frac{1}{c_3} -
   \frac{1}{c_0 c_2} - \frac{1}{c_1 c_3} = 1.
\end{equation*}

\smallbreak \item[(d)]
For $\cL_{4,10}(\K)$, the Legendrian decagons in $\K \bP^7$,
the five cross-ratios $c_0,\ldots,c_4$ satisfy
\begin{equation*}
   \frac{1}{c_0} + \frac{1}{c_1} + \frac{1}{c_2} + \frac{1}{c_3} + \frac{1}{c_4} -
   \frac{1}{c_0 c_2} - \frac{1}{c_1 c_3} - \frac{1}{c_2 c_4} -
   \frac{1}{c_3 c_0} - \frac{1}{c_4 c_1} = 1.
\end{equation*}
The following analog of Figure~1 depicts $c_0$ in this setting:
$$
 \xymatrix @!0 @R=0.32cm @C=0.45cm
 {
&&&x_0\ar@2{-}[dddddddd]&
\\
&x_9&&&& x_1\\
\\
x_8&&&&&&x_2\\
\\
x_7&&&&&&x_3\\
\\
&x_6\ar@{-}[rruuuuuuu]\ar@2{-}[rrrruuuuuu]&&&& x_4\\
&&&x_5\ar@{-}[rruuuuuuu]&
}
$$
\begin{center}
Figure 2.
The cross-ratio $c_0 =
\frac
{\om_{0,5} \, \om_{1,6}}
{\om_{0,6} \, \om_{1,5}}$
on $\cL_{4,10}$.
\end{center}
\end{enumerate}
\end{exs}

\subsection{The cyclic continuant}
\label{Even Complex Norms}

In Section~\ref{Normalizations} we will describe certain
normalized choices of representatives of $(n, 2n+2)$-\Lc s.
The nature of the normalizations depends on
the field $\K$ and the parity of~$n$.  The situation is
simplest for $n$ even and $\K = \C$, the case generalizing
the hexagonal example in Section~\ref{Hexagons}.
At this point we state the relevant normalization,
Proposition~\ref{even complex norm},
along with the corresponding specialization of
Theorem~\ref{Main Thm}, Theorem~\ref{EquivThm}.
A notable feature of this specialization is that the
relation~(\ref{GenEq}) is expressed in terms of continuants.
The proofs will be given in Section~\ref{ECN proofs}.

The classical {\em continuant\/} is the tridiagonal determinant
\begin{equation} \label{ContEq}
K_n(a_1,\ldots,a_n):=
\left|
\begin{array}{cccccc}
a_1 & 1 &&& \\[4pt]
1 & a_2 & 1 && \\[4pt]
&\ddots&\ddots&\!\!\ddots& \\[4pt]
&& 1 & a_{n-1} &\!\!\!1 \\[4pt]
&&&\!\!\! 1 &\!\! a_{n}
\end{array}
\right|.
\end{equation}
This remarkable polynomial has a long history.
It arises in the theory of continued fractions and many other areas;
see for example \cite{CoOv} and references therein.

The main topic of \cite{CoOv} is the {\em cyclic continuant,\/}
defined as
\begin{equation} \label{rotundus}
   R_{n+1}(a_0, \ldots, a_n) :=
   K_{n+1}(a_0, \ldots, a_n) - K_{n-1}(a_1, \ldots, a_{n-1}).
\end{equation}
It too is related to continued fractions, via the identity
$$
 R_{n+1}(a_0, ,\ldots,a_n)=
\mathrm{tr}\biggl(
\begin{array}{cc}
a_n&-1\\[4pt]
1&0
\end{array}
\biggr)
\biggl(
\begin{array}{cc}
a_{n-1}&-1\\[4pt]
1&0
\end{array}
\biggr)
\cdots
\biggl(
\begin{array}{cc}
a_0&-1\\[4pt]
1&0
\end{array}
\biggr).
$$
As an illustration,
let us display $R_5(a_0, a_1, a_2, a_3, a_4)$:
\begin{equation*}
   a_0 a_1 a_2 a_3 a_4 -
   \bigl( a_0 a_1 a_2 + a_1 a_2 a_3 + a_2 a_3 a_4 +
   a_3 a_4 a_0 + a_4 a_0 a_1 \bigr) +
   \bigl( a_0 + a_1 + a_2 + a_3 + a_4 \bigr).
\end{equation*}

In the next two statements, let $(X_0, \ldots, X_{2n+1})$ be a generic
$(n, 2n+2)$-\Lc\ over $\K$ with representatives $x_0, \ldots, x_{2n+1}$,
symplectic products $\omega_{ij} := \omega(x_i, x_j)$, and cross-ratios
$c_0, \ldots, c_n$.
Recall from Section~\ref{Outline} that we refer to the $(2n+2)$-periodic sequence
$(\omega_{i, i+n})_i$ as the set of {\em symplectic subdiameters\/}
of $(x_i)_i$, and the $(n+1)$-periodic sequence $(\omega_{i, i+n+1})_i$
as the set of {\em symplectic diameters.\/}

\begin{prop} \label{even complex norm}
For $n$ even and\/ $\K = \C$, $(X_0, \ldots, X_{2n+1})$
admits exactly four choices of representatives
whose symplectic subdiameters are all\/~$1$.
Fix such a choice, $x_0, \ldots, x_{2n+1}$,
and set $a_i := \omega_{i, i+n+1}$.

\begin{enumerate}

\item[(i)]
The four choices are $(x_i)_i$, $(-x_i)_i$, $((-1)^i x_i)_i$, and $(-(-1)^i x_i)_i$.

\smallbreak \item[(ii)]
The first two choices in~(i) have symplectic diameters $(a_i)_i$,
and the second two have symplectic diameters $(-a_i)_i$.

\smallbreak \item[(iii)]
The cross-ratios of the configuration are $c_i = a_i a_{i+1}$.
The symplectic diameters satisfy
\begin{equation*}
   a_i^2 = \frac{c_i c_{i+2} \cdots c_{i+n}}{c_{i+1} c_{i+3} \cdots c_{i+n-1}}.
\end{equation*}

\end{enumerate}
\end{prop}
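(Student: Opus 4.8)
The plan is to work directly with rescalings of a fixed set of representatives. First I would observe that, by the antiperiodicity constraint~(\ref{antiperiodic}), a rescaling $x_i \mapsto \lambda_i x_i$ preserves the property of being an antiperiodic set of representatives precisely when the scalars $\lambda_i$ are $(2n+2)$-periodic. Under such a rescaling the subdiameter $\omega_{i,i+n}$ is multiplied by $\lambda_i \lambda_{i+n}$, so normalizing all subdiameters to~$1$ amounts to solving the multiplicative system
\begin{equation*}
   \lambda_i\, \lambda_{i+n} = \omega_{i,i+n}^{-1}, \qquad i \in \Z/(2n+2)\Z,
\end{equation*}
whose right-hand sides are well defined and non-zero because Lemma~\ref{ii from i} guarantees $\omega_{i,i+n}\neq 0$ for a configuration.

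The main point is to count the solutions of this system, which is governed by the cycle structure of the permutation $i \mapsto i+n$ of $\Z/(2n+2)\Z$. For $n$ even one has $\GCD(n, 2n+2)=2$, so this permutation has exactly two orbits, the even and the odd residues, each of odd length $n+1$. I would then solve the system one orbit at a time: writing an orbit as $j_0 \to j_1 \to \cdots \to j_n \to j_0$ with $j_{k+1}=j_k+n$, the equations successively express each $\lambda_{j_k}$ as $\lambda_{j_0}^{\pm 1}$ times a product of known scalars; since the orbit length $n+1$ is odd, closing the cycle produces a single equation of the form $\lambda_{j_0}^2 = (\text{non-zero constant})$. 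Over $\C$ this has exactly two solutions, differing by the global sign $\lambda_j \mapsto -\lambda_j$ on that orbit, and each determines the remaining $\lambda_{j_k}$ uniquely. Combining the two independent sign choices on the even and odd orbits yields exactly four normalizations, which are precisely the four sequences $(x_i)_i$, $(-x_i)_i$, $((-1)^i x_i)_i$, and $(-(-1)^i x_i)_i$ of part~(i); indeed flipping the sign on the even (resp.\ odd) orbit multiplies $x_i$ by $-(-1)^i$ (resp.\ $(-1)^i$). I expect this counting step, together with verifying that the odd cycle length forces a unique square-root ambiguity per orbit, to be the crux of the argument.

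Part~(ii) then follows by tracking how the diameter $a_i = \omega_{i,i+n+1}$ transforms under each of the four choices. The two sign changes $(x_i) \mapsto (\pm x_i)$ introduce an even number of sign factors and hence fix $a_i$, whereas $(x_i) \mapsto ((-1)^i x_i)$ multiplies $a_i$ by $(-1)^{i+(i+n+1)} = (-1)^{n+1} = -1$ since $n$ is even; this gives the stated diameters $(a_i)_i$ and $(-a_i)_i$.

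For part~(iii) I would first establish $c_i = a_i a_{i+1}$ from the form~(\ref{L(n, 2n+2) CRs form 2}): its numerator is $\omega_{i,i+n+1}\,\omega_{i+1,i+n+2} = a_i a_{i+1}$, while its denominator $\omega_{i-n,i}\,\omega_{i+1,i+n+1}$ is a product of two subdiameters, both equal to~$1$ after normalization. Next, the relations~(\ref{omega relations}) give $a_{i+n+1} = a_i$, so the diameters are $(n+1)$-periodic. The claimed identity for $a_i^2$ is then a routine telescoping: substituting $c_j = a_j a_{j+1}$ into the alternating product on the right-hand side collapses it, using $(n+1)$-periodicity, to $a_i\, a_{i+n+1} = a_i^2$. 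This last part is purely computational and presents no real difficulty.
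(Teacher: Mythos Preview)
Your proposal is correct.  The main difference from the paper's argument is in how \emph{existence} of a normalized choice is established.  The paper does not rescale the given representatives directly; instead it quotes the constructions of Section~\ref{MTiii} to produce some generic configuration with subdiameters all equal to~$1$ and the prescribed cross-ratios, and then invokes Theorem~\ref{Main Thm}(ii) to transport those representatives to $(X_0,\ldots,X_{2n+1})$ via a symplectic equivalence.  You instead solve the rescaling system $\lambda_i\lambda_{i+n}=\omega_{i,i+n}^{-1}$ directly on the given configuration, using the two odd-length orbits of $i\mapsto i+n$ on $\Z/(2n+2)\Z$.  Your route is more self-contained---it does not depend on Theorem~\ref{Main Thm}---and is in fact the same technique the paper deploys elsewhere (the scale factors~(\ref{scale factors}) in Section~\ref{MTii}, and Lemma~\ref{nN norm}).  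For the count of normalized choices and for parts~(ii) and~(iii) the two arguments are essentially identical: the paper also reduces to $\lambda_i^2=1$ with $\lambda_i$ depending only on $i\bmod 2$, and declares the remaining claims immediate.
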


This result shows that the symplectic diameters
of the choices of representatives whose
symplectic subdiameters are all\/~$1$ are invariants
of the configuration, defined up to an overall sign.
We refer to them as the {\em normalized symplectic diameters\/}
and denote them by $\pm(a_0, \ldots, a_n)$.

In reading the next result, keep in mind that
$K_n$ and hence also $R_n$ are of parity $(-1)^n$
under negation of all their arguments, so
$R_{n+1}(a_0, \ldots, a_n)$ vanishes if and only if
$R_{n+1}(-a_0, \ldots, -a_n)$ vanishes.

\begin{thm} \label{EquivThm}
Let $n$ be even.

\begin{enumerate}
\item[(i)]
The cyclic continuant of the normalized symplectic diameters
on $\cL_{n, 2n+2}(\C)$ vanishes:
\begin{equation*}
   R_{n+1}(a_0, \ldots, a_n) = 0.
\end{equation*}

\smallbreak \item[(ii)]
Equivalence classes in $\cL_{n, 2n+2}(\C)$
with the same normalized symplectic diameters are equal.

\smallbreak \item[(iii)]
If\/ $\pm(a_0, \ldots, a_n)$ are arbitrary non-zero complex scalars
with cyclic continuant zero, then they are the normalized symplectic diameters
of some equivalence class in $\cL_{n, 2n+2}(\C)$.

\end{enumerate}
\end{thm}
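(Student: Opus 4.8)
The plan is to obtain Theorem~\ref{EquivThm} as the translation of Theorem~\ref{Main Thm} into the normalized-diameter coordinates supplied by Proposition~\ref{even complex norm}. The key ingredient is a combinatorial identity: after the substitution $c_i = a_i a_{i+1}$ of Proposition~\ref{even complex norm}(iii), the single relation~(\ref{GenEq}) becomes precisely the vanishing of the cyclic continuant. Once this is established, parts~(ii) and~(iii) follow formally from the corresponding parts of Theorem~\ref{Main Thm}, so the substantive step—and the main obstacle—is this identity underlying part~(i).

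To prove part~(i) I would first expand $R_{n+1}$ as a signed sum over the matchings of the cycle on the cyclically ordered vertex set $\{0, 1, \ldots, n\}$. From the trace formula~(\ref{rotundus}), or directly from the cyclic version of the tridiagonal structure in~(\ref{ContEq}), one gets
\begin{equation*}
   R_{n+1}(a_0, \ldots, a_n) =
   \sum_{M} (-1)^{|M|} \prod_{k \text{ unmatched by } M} a_k,
\end{equation*}
where $M$ ranges over the sets of pairwise disjoint edges $\{i, i+1\}$ (indices mod $n+1$). A matching of size $r$, with smaller endpoints $i_1 < \cdots < i_r$, is disjoint exactly when the cyclic distances $|i_s - i_{s'}|_{n+1}$ all exceed~$1$, that is, when $(i_1, \ldots, i_r) \in \cI_n(r)$; and then $\prod_{k \text{ unmatched}} a_k = (a_0 \cdots a_n)/(c_{i_1} \cdots c_{i_r})$ under $c_i = a_i a_{i+1}$. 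Multiplying~(\ref{GenEq}) through by the non-zero scalar $a_0 \cdots a_n$ therefore reproduces $R_{n+1} = 0$ term by term, the constant~$1$ matching the empty matching and the signs $(-1)^r = (-1)^{|M|}$ agreeing. Since $a_0 \cdots a_n \neq 0$, relation~(\ref{GenEq}) and the vanishing $R_{n+1} = 0$ are equivalent, and part~(i) is immediate from Theorem~\ref{Main Thm}(i).

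For part~(ii), the normalized diameters determine the cross-ratios via $c_i = a_i a_{i+1}$, a product unchanged by the overall sign; hence classes with equal normalized diameters have equal cross-ratios and so coincide over~$\C$ by Theorem~\ref{Main Thm}(ii). For part~(iii), given non-zero $a_i$ with $R_{n+1} = 0$, set $c_i := a_i a_{i+1}$; these are non-zero and satisfy~(\ref{GenEq}) by the equivalence above, so Theorem~\ref{Main Thm}(iii) yields a generic configuration with these cross-ratios. Its normalized diameters $a_i'$ satisfy $(a_i')^2 = (c_i c_{i+2} \cdots c_{i+n})/(c_{i+1} c_{i+3} \cdots c_{i+n-1})$ by Proposition~\ref{even complex norm}(iii); substituting $c_i = a_i a_{i+1}$, the numerator becomes $a_i \prod_k a_k$ and the denominator $\prod_k a_k / a_i$, so the ratio collapses to $a_i^2$, whence $a_i' = \e_i a_i$ with $\e_i = \pm 1$. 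Comparing $c_i = a_i' a_{i+1}'$ with $c_i = a_i a_{i+1}$ forces $\e_i \e_{i+1} = 1$ for all~$i$, so the $\e_i$ share a common value and $\pm(a_i') = \pm(a_i)$, as required. Thus everything beyond the combinatorial identity of part~(i) is bookkeeping built on Theorem~\ref{Main Thm} and Proposition~\ref{even complex norm}.
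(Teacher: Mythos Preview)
Your proposal is correct and follows essentially the same approach as the paper. The only packaging difference is in part~(i): the paper observes directly that for normalized representatives the matrix $\Omega_n$ of~(\ref{Omega}) has $\omega_{i,i+n}=1$ and $\omega_{i,i+n+1}=a_i$, so Lemma~\ref{pf det lemma} gives $\pf(\Omega_n)=\pm\bigl(K_{n+1}(a_0,\ldots,a_n)-K_{n-1}(a_1,\ldots,a_{n-1})\bigr)=\pm R_{n+1}$, which vanishes since the Gram matrix is singular; you instead start from the already-proved relation~(\ref{GenEq}) and verify combinatorially via the cycle-matching expansion that substituting $c_i=a_ia_{i+1}$ and clearing the denominator recovers $R_{n+1}=0$. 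These are the same computation read in two directions, since the cyclic replacement algorithm after Proposition~\ref{pf Omega} is exactly your matching sum. Your treatments of~(ii) and~(iii), including the explicit check that $(a_i')^2=a_i^2$ and $\e_i\e_{i+1}=1$, agree with and slightly flesh out the paper's one-line appeal to Proposition~\ref{even complex norm}(iii).
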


As noted above, Proposition~\ref{even complex norm} and~Theorem~\ref{EquivThm} are proven in Section~\ref{ECN proofs}.

\section{Pfaffians and the proof of Theorem~\ref{Main Thm}}
\label{PMT Sec}

This section contains the proof of our main result, Theorem~\ref{Main Thm}.

\subsection{Tridiagonal determinants and the proof of Theorem~\ref{Main Thm}{\rm (i)}}
\label{MTi}

The following formula is well-known and easily proven by induction.

\begin{prop} \label{tridiag det}
The tridiagonal matrix
\begin{equation} \label{An}
   A_n := \left(
   \begin{array}{cccccc}
   a_{11} &\ \ \ a_{12} &&& \\[6pt]
   a_{21} &\ \ \ a_{22} & a_{23} && \\[6pt]
   &\!\!\ddots&\!\!\ddots&\!\!\ddots&\\[8pt]
   &&\ \ \ a_{n-1, n-2} &\ \ \ a_{n-1, n-1} & a_{n-1, n} \\[8pt]
   &&&\ \ \ a_{n, n-1} & a_{nn}
   \end{array}
   \right)
\end{equation}
has determinant
\begin{equation} \label{tridiag det formula}
   \det(A_n) = \Bigl( \prod_{k = 1}^n a_{kk} \Bigr)
   \sum_{r=0}^{\lfloor n/2 \rfloor} (-1)^r
   \sum_{\substack{1 \le i_1 < \cdots < i_r < n, \\
   i_s + 1 < i_{s+1}\, \forall\, 1 \le s < r}}
   \prod_{s=1}^r\ 
   \frac{a_{i_s, i_s+1}\, a_{i_s+1, i_s}}{a_{i_s i_s}\, a_{i_s+1, i_s+1}},
\end{equation}
where the summand at $r=0$ is understood to be~$1$.
\end{prop}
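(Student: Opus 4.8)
The plan is to argue by induction on $n$ via the classical three-term recurrence for tridiagonal determinants. Writing $D_n := \det(A_n)$, I first expand along the last row, whose only non-zero entries are $a_{n, n-1}$ and $a_{nn}$. The minor of $a_{nn}$ is $D_{n-1}$, the determinant of the top-left $(n-1) \times (n-1)$ block; the minor of $a_{n,n-1}$ is the determinant of the matrix obtained by deleting row~$n$ and column~$n-1$, whose last column now contains the single non-zero entry $a_{n-1,n}$, so a further expansion reduces it to $a_{n-1,n}\, D_{n-2}$. Accounting for the alternating signs, this yields
\begin{equation*}
   D_n = a_{nn}\, D_{n-1} - a_{n-1,n}\, a_{n,n-1}\, D_{n-2}.
\end{equation*}
The base cases $D_1 = a_{11}$ and $D_2 = a_{11} a_{22} - a_{12} a_{21}$ are immediate and agree with the right-hand side of~(\ref{tridiag det formula}).

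It then remains to check that the right-hand side of~(\ref{tridiag det formula}), which I denote $F_n$, obeys the same recurrence. I would split the inner sum over admissible index tuples $(i_1, \ldots, i_r)$ according to whether the largest admissible index $n-1$ occurs. If it does not, then $i_r \le n-2$ and these tuples are exactly those appearing in $F_{n-1}$; since the prefactor of $F_n$ exceeds that of $F_{n-1}$ by the factor $a_{nn}$, this part contributes $a_{nn}\, F_{n-1}$. If $i_r = n-1$, the corresponding factor is $-\,a_{n-1,n}\, a_{n,n-1} / (a_{n-1,n-1}\, a_{nn})$, the non-adjacency constraint forces $i_{r-1} \le n-3$, and the remaining indices range over the tuples of $F_{n-2}$; absorbing $a_{n-1,n-1}\, a_{nn}$ from the prefactor, this part contributes $-a_{n-1,n}\, a_{n,n-1}\, F_{n-2}$. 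Adding the two pieces gives $F_n = a_{nn}\, F_{n-1} - a_{n-1,n}\, a_{n,n-1}\, F_{n-2}$, completing the induction.

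Conceptually, the split above is the statement that a tiling of the path $1 - 2 - \cdots - n$ by monomers (weight $a_{kk}$) and adjacent dimers (weight $-a_{i,i+1}\, a_{i+1,i}$) either leaves $n$ as a monomer or pairs it with $n-1$; the prefactor $\prod_k a_{kk}$ in~(\ref{tridiag det formula}) merely records the monomer weights, while each ratio records the replacement of a monomer pair by a dimer. The only real work is the bookkeeping: matching the non-adjacency condition $i_s + 1 < i_{s+1}$ and the boundary constraint $i_r < n$ to the index shifts in the recurrence, and tracking how the diagonal prefactor redistributes between the $F_{n-1}$ and $F_{n-2}$ terms. I expect this bookkeeping at the boundary index to be the only point requiring care, and it is routine.
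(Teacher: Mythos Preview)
Your proof is correct and follows precisely the approach the paper indicates: the proposition is stated as ``well-known and easily proven by induction,'' and your three-term recurrence $D_n = a_{nn}\,D_{n-1} - a_{n-1,n}\,a_{n,n-1}\,D_{n-2}$ together with the split of the index set according to whether $i_r = n-1$ is exactly that induction. The monomer--dimer interpretation you add is the content of the paper's subsequent remark on Euler's replacement algorithm.
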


\begin{rem}
When the sub- and superdiagonal entries of $A_n$ are all~$1$,
its determinant is in fact the continuant $K_n$.
In this case Euler discovered a pleasing interpretation
of Proposition~\ref{tridiag det}, which generalizes as follows:
to write out all summands of~(\ref{tridiag det formula}),
start with $a_{11} \cdots a_{nn}$, and for each set of
disjoint adjacent pairs $(i, i+1)$, $1 \le i < n$,
replace $a_{ii} a_{i+1, i+1}$ by $-a_{i, i+1} a_{i+1, i}$.
We refer to this process as {\em Euler's replacement algorithm.\/}

\begin{exs}
Indicating pairs by parentheses,
$\det(A_3)$, $\det(A_4)$, and $\det(A_5)$ are, respectively,
\begin{align*}
   \bullet\ & a_{11} a_{22} a_{33}
   - (a_{12} a_{21}) a_{33} - a_{11} (a_{23} a_{32}), \\[4pt]
   \bullet\ & a_{11} a_{22} a_{33} a_{44}
   - (a_{12} a_{21}) a_{33} a_{44} -
   a_{11} (a_{23} a_{32}) a_{44} -
   a_{11} a_{22} (a_{34} a_{43})
   + (a_{12} a_{21}) (a_{34} a_{43}), \\[4pt]
   \bullet\ & a_{11} a_{22} a_{33} a_{44} a_{55}
   - (a_{12} a_{21}) a_{33} a_{44} a_{55} -
   a_{11} (a_{23} a_{32}) a_{44} a_{55} -
   a_{11} a_{22} (a_{34} a_{43}) a_{55} -
   a_{11} a_{22} a_{33} (a_{45} a_{54}) \\[4pt]
   & \hphantom{a_{11} a_{22} a_{33} a_{44} a_{55}}
   + (a_{12} a_{21}) (a_{34} a_{43}) a_{55}
   + (a_{12} a_{21}) a_{33} (a_{45} a_{54})
   + a_{11} (a_{23} a_{32}) (a_{45} a_{54}).
\end{align*}
\end{exs}
\end{rem}

Recall now that the {\em Pfaffian\/}
$\pf(S)$ is a polynomial in the entries of a
skew-symmetric matrix~$S$ whose square is $\det(S)$.
Consider the $(2n+2) \times (2n+2)$ matrix
\begin{equation} \label{Omega}
   \Omega_n := 
   \left(
    \begin{array}{cc} 
    \omega_{0, n} E & A\\ [8pt]
   -A^T & \omega_{n+1, 2n+1} E 
   \end{array} 
   \right),
\end{equation}
with ingredients defined as follows:
$E$ is the skew-symmetric $(n+1) \times (n+1)$ matrix
$e_{1,n+1} - e_{n+1,1}$ ($e_{ij}$ being the elementary matrix
whose $ij^{\thup}$ entry is~1 and whose other entries are~0),
$A$ is given by
\begin{equation} \label{omega tridiag}
   A = \left(
   \begin{array}{cccccc}
   \omega_{0, n+1} &\ \ \ \omega_{0, n+2} &&& \\[4pt]
   \omega_{1, n+1} &\ \ \ \omega_{1, n+2} & \omega_{1, n+3} && \\[4pt]
   &\ddots&\ddots&\!\!\ddots&\\[4pt]
   &&\ \ \ \omega_{n-1, 2n-1} &\ \ \ \omega_{n-1, 2n} & \omega_{n-1, 2n+1} \\[4pt]
   &&&\ \ \ \omega_{n, 2n} & \omega_{n, 2n+1}
   \end{array}
   \right),
\end{equation}
and the $\omega_{ij}$ are arbitrary scalars.
As a visual aid, we illustrate $\Omega_n$ in long form:
\begin{equation} \label{Omegan}
   \Omega_n=
\left(
\begin{array}{llllllllll}
&&& \hphantom{-} \omega_{0, n}
& \!\!\!\!\! \hphantom{-} \omega_{0, n+1}
& \!\!\!\!\! \!\!\!\!\! \omega_{0, n+2} & \\[4pt]
&&&& \!\!\!\!\! \hphantom{-} \omega_{1, n+1}
& \!\!\!\!\! \!\!\!\!\! \omega_{1, n+2}
& \omega_{1, n+3} &  \\
&&&&& \!\!\!\!\! \!\!\!\!\! \ddots & \!\!\!\!\! \ddots & \!\!\!\!\! \!\!\!\!\! \ddots \\
&&&&&& \!\!\!\!\! \ddots & \!\!\!\!\! \!\!\!\!\! \ddots
& \omega_{n-1, 2n+1} \\
-\omega_{0, n} &&&&&&
& \!\!\!\!\! \!\!\!\!\! \!\!\!\!\! \omega_{n, 2n}
& \omega_{n, 2n+1} \\[4pt]
-\omega_{0, n+1} & \!\!\! -\omega_{1, n+1} &&&&&&
& \omega_{n+1, 2n+1} \\
-\omega_{0, n+2} & \ddots & \ddots &&&&&& \\[4pt]
& \ddots & \ddots & \\
&&& \!\!\! -\omega_{n, 2n} &&&& \\[4pt]
&& \!\!\! -\omega_{n-1, 2n+1}
& \!\!\! -\omega_{n, 2n+1}
& \!\!\! -\omega_{n+1, 2n+1} &&
\end{array}
\right).
\end{equation}

\begin{prop} \label{pf Omega}
Recall $\cI_n(r)$ from Section~\ref{2n+2}.
The Pfaffian $\pf(\Omega_n)$ is given by the expression
\begin{equation} \label{pf Omega formula}
   (-1)^{n (n+1)/2}
   \Bigl( \prod_{k=0}^n \omega_{k, k+n+1} \Bigr)
   \sum_{r=0}^{\lfloor (n+1)/2 \rfloor} (-1)^r
   \sum_{\cI_n(r)}
   \prod_{s=1}^r \ \frac{\omega_{i_s, i_s+n+2}\, \omega_{i_s+1, i_s+n+1}}
   {\omega_{i_s, i_s+n+1}\, \omega_{i_s+1, i_s+n+2}},
\end{equation}
where the summand at $r=0$ is taken to be~$1$,
and whenever $\omega_{i, 2n+2}$ appears
it should be replaced by $\omega_{0, i}$.
\end{prop}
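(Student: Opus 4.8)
The plan is to expand $\pf(\Omega_n)$ directly as a signed sum over perfect matchings of $\{1, \ldots, 2n+2\}$, using the crossing-number form of the Pfaffian: drawing the pairs of a matching $\pi$ as chords on the vertices $1 < 2 < \cdots < 2n+2$ placed on a line, one has $\pf(\Omega_n) = \sum_\pi (-1)^{\mathrm{cr}(\pi)} \prod_{\{a,b\} \in \pi} (\Omega_n)_{ab}$, where $\mathrm{cr}(\pi)$ counts crossing pairs of chords. First I would record the nonzero entries of $\Omega_n$: writing the blocks as rows/columns $1, \ldots, n+1$ and $n+2, \ldots, 2n+2$, the only ``internal'' edges are $\{1, n+1\}$ with weight $\omega_{0,n}$ and $\{n+2, 2n+2\}$ with weight $\omega_{n+1, 2n+1}$, while the cross edges come from $A$, whose entry is $A_{ij} = \omega_{i-1, n+j}$, so $A$ is tridiagonal with nonzero $j \in \{i-1, i, i+1\}$. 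I would then observe a parity constraint: a matching contributes a nonzero term only if it uses \emph{both} internal edges or \emph{neither}. Indeed, if it used $\{1, n+1\}$ but not the bottom one, all $n+1$ bottom vertices would have to match across to top vertices, yet only the $n-1$ top vertices $2, \ldots, n$ remain available; the symmetric count handles the other mixed case. This splits the sum into two families.

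For the ``neither'' family the surviving matchings use only entries of $A$ and $-A^T$, so their total is exactly $\pf\left(\begin{smallmatrix} 0 & A \\ -A^T & 0\end{smallmatrix}\right) = (-1)^{n(n+1)/2}\det(A)$; the prefactor is transparent in the crossing picture, since the ``diagonal'' matching $\{i, n+1+i\}$ for $1\le i\le n+1$ consists of $n+1$ mutually crossing chords, giving $\binom{n+1}{2} = n(n+1)/2$ crossings. Expanding $\det(A)$ by Proposition~\ref{tridiag det}, and matching the adjacent transposition of rows $(j, j+1)$ of $A$ with the index $i = j-1$, these terms reproduce precisely the summands of~(\ref{pf Omega formula}) indexed by those $S \in \cI_n(r)$ that avoid the position~$n$ (the vanishing of the wrap-around crossing is exactly why $\det(A)$ gives only the non-cyclic subsets).

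For the ``both'' family I would fix the two internal edges (contributing $\omega_{0,n}\,\omega_{n+1, 2n+1}$) and reduce to matchings of the interior vertices $\{2, \ldots, n\}$ to $\{n+3, \ldots, 2n+1\}$ through the central $(n-1)\times(n-1)$ tridiagonal block $A'$ of $A$; these contribute $\det(A')$ up to one global sign, and expanding $\det(A')$ by Proposition~\ref{tridiag det} produces exactly the summands of~(\ref{pf Omega formula}) indexed by the $S \in \cI_n(r)$ that contain $n$ (necessarily $S = \{n\} \cup S'$ with $S' \subseteq \{1, \ldots, n-2\}$). Here the stated replacement $\omega_{i, 2n+2} \mapsto \omega_{0,i}$ is precisely what rewrites the position-$n$ ratio as $\frac{\omega_{0,n}\,\omega_{n+1,2n+1}}{\omega_{n,2n+1}\,\omega_{0,n+1}}$, absorbing the two internal weights while the denominator restores the two diagonal entries $A_{11}, A_{n+1,n+1}$ that were removed in passing from $A$ to $A'$.

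The hard part, and the only place real care is needed, is the global sign of this second family. I would compute it from the crossing number of the interior-diagonal matching, whose chords are $\{1, n+1\}$, $\{n+2, 2n+2\}$, and $\{i, n+1+i\}$ for $2 \le i \le n$: the interior diagonals give $\binom{n-1}{2}$ crossings among themselves, each internal chord crosses all $n-1$ of them, and the two internal chords do not cross, for a total of $\frac{(n-1)(n-2)}{2} + 2(n-1) = \frac{(n-1)(n+2)}{2}$ crossings. Since $\frac{(n-1)(n+2)}{2} - \frac{n(n+1)}{2} = -1$, the sign is $-(-1)^{n(n+1)/2}$, and this extra $-1$ relative to the ``neither'' family is exactly the factor that makes position~$n$ enter~(\ref{pf Omega formula}) with the same $(-1)^r$ weight as every other position, completing the cyclic index set $\cI_n(r)$. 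Combining the two families then yields~(\ref{pf Omega formula}).
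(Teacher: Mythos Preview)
Your proof is correct and follows essentially the same architecture as the paper's: both reduce $\pf(\Omega_n)$ to $(-1)^{n(n+1)/2}\bigl(\det(A) - \omega_{0,n}\,\omega_{n+1,2n+1}\det(A_{\mid})\bigr)$ and then apply the tridiagonal determinant formula (Proposition~\ref{tridiag det}), identifying the $\det(A)$ piece with the tuples in $\cI_n(r)$ avoiding~$n$ and the $\det(A_{\mid})$ piece with those containing~$n$. The difference is that the paper invokes this Pfaffian identity as a ready-made lemma (Lemma~\ref{pf det lemma}, quoted from~\cite{CoOv}), whereas you derive it from scratch via the crossing-number expansion of the Pfaffian, splitting matchings by whether they use the two ``internal'' edges. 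Your parity argument forcing both-or-neither, and your crossing-count $\tfrac{(n-1)(n+2)}{2} = \tfrac{n(n+1)}{2} - 1$ for the second family, are exactly what is needed to recover the relative sign $-xy$ in that lemma; so in effect you have given an independent combinatorial proof of Lemma~\ref{pf det lemma} in the course of proving the proposition. This makes your argument self-contained, at the price of the sign bookkeeping that the paper outsources.
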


\begin{proof}
Given any $m \times m$ matrix $A$, let $A_{\mid}$ be the
$(m-2) \times (m-2)$ matrix obtained by deleting
the top and bottom rows and left and right columns of $A$.
The following lemma is proven in \cite{CoOv}.

\begin{lem} \label{pf det lemma}
For any $m \times m$ matrix $A$ and any scalars $x$ and $y$, one has
\begin{equation} \label{pf det formula}
   \pf \left( \begin{matrix} x E & A\\ -A^T & y E \end{matrix} \right)
   = (-1)^{m (m-1)/2} \bigl( \det(A) - xy \det(A_{\mid}) \bigr).
\end{equation}
\end{lem}

Applying both~(\ref{tridiag det formula}) and~(\ref{pf det formula})
to~(\ref{Omega}) yields~(\ref{pf Omega formula}).
\end{proof}

\begin{rem}
The Pfaffian (\ref{pf Omega formula})
may be interpreted via a cyclic version of
Euler's replacement algorithm,
the ``cyclic replacement algorithm'':
to write out all summands of
$(-1)^{n (n+1)/2}\, \pf(\Omega_n)$,
start with
\begin{equation*}
   \omega_{0, n+1} \omega_{1, n+2} \cdots \omega_{n, 2n+1},
\end{equation*}
and for each set of disjoint cyclically adjacent pairs $(i, i+1)$,
$0 \le i \le n$, replace $\omega_{i, i+n+1} \omega_{i+1, i+n+2}$
by $-\omega_{i, i+n+2} \omega_{i+1, i+n+1}$.
{\em Cyclically adjacent\/} indicates that the pair
$(n, n+1)$ is read as $(n, 0)$.  When it is in the set of pairs,
replace $\omega_{0, n+1} \omega_{n, 2n+1}$
by $-\omega_{0, n} \omega_{n+1, 2n+1}$.
To explain, note that by the replacement rule
$\omega_{i, 2n+2} = \omega_{0, i}$,
\begin{equation*}
   \omega_{0, n+1} \omega_{n, 2n+1} =
   \omega_{n, 2n+1} \omega_{n+1, 2n+2}, \qquad
   \omega_{n, 2n+2} \omega_{n+1, 2n+1} =
   \omega_{0,n} \omega_{n+1, 2n+1}.
\end{equation*}
\end{rem}

\begin{exs}
Let us give~(\ref{pf Omega formula}) explicitly for small $n$.
As in the examples below Proposition~\ref{tridiag det},
we indicate pairs with parentheses.
By~(\ref{pf det formula}) and~(\ref{omega tridiag}),
the summands from sets not containing the special pair $(n, 0)$
add up to an $(n+1) \times (n+1)$ tridiagonal determinant,
and the summands from sets containing it add up to
$\omega_{0, n} \omega_{n+1, 2n+1}$ times
an $(n-1) \times (n-1)$ tridiagonal determinant.
To emphasize this, we have separated
the two types of terms with square brackets
and factored $\omega_{0, n} \omega_{n+1, 2n+1}$
out of the second type.

Note that at $n = 1$ there are two ways to delete
the lone pair $(0, 1)$: as $(0, 1)$, or as $(1, 0)$.
For $n = 1$, $2$, and~$3$,
$(-1)^{n (n+1)/2}\, \pf(\Omega_n)$ is, respectively,
\begin{align*}
   \bullet\ & \Bigl[ \omega_{02} \omega_{13}
   - (\omega_{03} \omega_{12}) \Bigr]
   - (\omega_{01} \omega_{23}) \Bigl[ 1 \Bigr], \\[4pt]
   \bullet\ & \Bigl[ \omega_{03} \omega_{14} \omega_{25}
   - (\omega_{04} \omega_{13}) \omega_{25}
   - \omega_{03} (\omega_{15} \omega_{24}) \Bigr]
   - (\omega_{02} \omega_{35}) \Bigl[ \omega_{14} \Bigr], \\[4pt]
   \bullet\ & \Bigl[ \omega_{04} \omega_{15} \omega_{26} \omega_{37}
   - (\omega_{05} \omega_{14}) \omega_{26} \omega_{37}
   - \omega_{04} (\omega_{16} \omega_{25}) \omega_{37}
   - \omega_{04} \omega_{15} (\omega_{27} \omega_{36})
   + (\omega_{05} \omega_{14}) (\omega_{27} \omega_{36}) \Bigr] \\[4pt]
   & \hphantom{\Bigl[ \omega_{04} \omega_{15} \omega_{26} \omega_{37}}
   - (\omega_{03} \omega_{47})
   \Bigl[ \omega_{15} \omega_{26} - (\omega_{16} \omega_{25}) \Bigr].
\end{align*}
Observe that dividing by
$\omega_{0, n+1} \omega_{1, n+2} \cdots \omega_{n, 2n+1}$
leads to the examples in Section~\ref{2n+2},
and compare the terms in square brackets
to the examples below Proposition~\ref{tridiag det}.
\end{exs}

\medbreak \noindent {\bf Proof of~(\ref{GenEq}).}
Suppose now that $(\K x_0, \ldots, \K x_{2n+1})$ is an $(n, 2n+2)$-\Lc,
and revert to our customary notation $\omega_{ij} = \omega(x_i, x_j)$.
By the Lagrangian condition, the $\omega$-Gram matrix
$\Omega(x_0, \ldots, x_{2n+1})$ is precisely the matrix
$\Omega_n$ in~(\ref{Omega}), and by Lemma~\ref{Gram}(ii),
\begin{equation*}
   \det \bigl( \Omega(x_0, \ldots, x_{2n+1}) \bigr) = 0.
\end{equation*}
Hence Proposition~\ref{pf Omega} yields the following corollary.

\begin{cor} \label{Thm1i}
Let $(\K x_0, \ldots, \K x_{2n+1})$ be an $(n, 2n+2)$-\Lc.  Then
\begin{equation} \label{Thm1i formula}
   0 = \Bigl( \prod_{k=0}^n \omega_{k, k+n+1} \Bigr)
   \sum_{r=0}^{\lfloor (n+1)/2 \rfloor} (-1)^r
   \sum_{\cI_n(r)}
   \prod_{s=1}^r \ \frac{1}{c_{i_s}},
\end{equation}
where as usual, the summand at $r=0$ is understood to be~$1$.
\end{cor}

This in turn yields~(\ref{GenEq}) of Theorem~\ref{Main Thm}(i),
because generic $(n, 2n+2)$-configurations
have $\omega_{k, k+n+1} \not= 0$.
Note that~(\ref{pf Omega formula}) is polynomial
in the $\omega_{ij}$, so after cancellation,
Corollary~\ref{Thm1i} gives a non-trivial relation
even on non-generic configurations.

\subsection{Proof of Theorem~\ref{Main Thm}{\rm (ii)}}
\label{MTii}

Suppose that $(\K x_1, \ldots, \K x_{2n+2})$
and $(\K \tilde x_1, \ldots, \K \tilde x_{2n+2})$
are two generic $(n, 2n+2)$-\Lc s
with the same cross-ratios $c_1, \ldots, c_{n+1}$.
Following our convention $\omega_{ij} := \omega(x_i, x_j)$,
we set $\tilde \omega_{ij} := \tilde \omega(x_i, x_j)$.

By Lemma~\ref{Gram}(iii), in order to prove the two configurations
equivalent it suffices to find a renormalization
$x_i \mapsto \lambda_i x_i$ such that
$\lambda_i \lambda_j \omega_{ij} = \tilde \omega_{ij}$
for all $i$ and $j$.  By the Lagrangian condition,
we need only do this for $j = i+n$ and $i+n+1$.
The argument depends on the parity of $n$.

\medbreak \noindent {\bf The case of $n$ even.}
It is important to keep in mind that here $\GCD(n, 2n+2) = 2$.
Hence the subset of lines in an $(n, 2n+2)$-configuration
whose indices have a given parity may be written in either of the following ways:
\begin{equation*}
   \{\K x_i, \K x_{i+2}, \K x_{i+4}, \ldots, \K x_{i+2n}\} =
   \{\K x_i, \K x_{i+n}, \K x_{i+2n}, \ldots, \K x_{i+n^2}\}.
\end{equation*}

We begin with the case $\K = \C$.
For $i = 1, \ldots, 2n+2$, fix a square root $\chi_i$
of $\tilde \omega_{i, i+n} / \omega_{i, i+n}$.
Extend $(2n+2)$-periodically to define $\chi_i$
for $i \in \Z$.  Set
\begin{equation} \label{scale factors}
   \lambda_i \,:=\, \frac{\chi_i \chi_{i+2n} \chi_{i+4n} \cdots \chi_{i+n^2}}
   {\chi_{i+n} \chi_{i+3n} \cdots \chi_{i+(n-1)n}} \,=\,
   \prod_{r=0}^n \chi_{i+rn}^{(-1)^r}
\end{equation}
and check that $\lambda_i \lambda_{i+n} =
\tilde \omega_{i, i+n} / \omega_{i, i+n}$ for all~$i$.

Replacing $x_i$ by $\lambda_i x_i$, we may assume
$\omega_{i, i+n} = \tilde \omega_{i, i+n}$ for all~$i$.
Write $\rho_i$ for the ratio
$\tilde \omega_{i, i+n+1} / \omega_{i, i+n+1}$,
which is $(n+1)$-periodic.
Use the fact that the configurations have the same cross-ratios
to obtain $\rho_i \rho_{i+1} = 1$ for all $i$.  Hence
\begin{equation*}
   1 = \frac{(\rho_i \rho_{i+1}) (\rho_{i+2} \rho_{i+3})
   \cdots (\rho_{i+n} \rho_{i+n+1})}
   {(\rho_{i+1} \rho_{i+2}) (\rho_{i+3} \rho_{i+4})
   \cdots (\rho_{i+n-1} \rho_{i+n})} = \rho_i^2.
\end{equation*}
Deduce that the $\rho_i$ are either all~$1$ or all~$-1$.
In the former case we are done.
In the latter case, rescale again, replacing $x_i$ by $(-1)^i x_i$.
This leaves the $\omega_{i, i+n}$ unchanged
and negates the $\omega_{i, i+n+1}$, so again we are done.

\begin{ex}
For $n = 2$ and $n = 4$, the following diagrams depict the
equations giving the scale factors~(\ref{scale factors}) sending
$\omega_{i, i+n}$ to $\tilde\omega_{i, i+n}$ for $i$ even.
Those for $i$ odd are constructed independently.
$$
\xymatrix @!0 @R=0.55cm @C=1.1cm
{\\
&x_0\ar@<-4pt>@{<-}[lddd]\ar@<4pt>@{->}[rddd]&
\\
x_5&& x_1\\
\\
x_4\ar@{<-}[rr]&& x_2\\
&x_3&
}
\qquad \qquad \qquad \qquad
 \xymatrix @!0 @R=0.4cm @C=0.55cm
 {
&&&x_0\ar@<4pt>@{->}[rrddddddd]&
\\
&x_9&&&& x_1\\
\\
x_8\ar@{->}[rrrrrr]&&&&&&x_2\ar@{->}[llllldddd]\\
\\
x_7&&&&&&x_3\\
\\
&x_6\ar@<2pt>@{->}[rruuuuuuu]&&&& x_4\ar@{->}[llllluuuu]\\
&&&x_5&
}
$$
\begin{center}
Figure 3.
The rescaling scheme for $\cL_{2,6}$ and $\cL_{4,10}$.
\end{center}
\end{ex}

Now take $\K = \R$.  Because $(2n+2)/\GCD(n, 2n+2) = n+1$
is odd, the sign invariant
\begin{equation} \label{epsilon even}
   \sgn \Bigl(\prod_{r=0}^{n} \omega_{rn, (r+1)n} \Bigr)
   = \sgn \Bigl(\prod_{s=0}^{n} \omega_{2s, 2s+n} \Bigr)
\end{equation}
reverses under passage to the opposite configuration.
Therefore, replacing $(\R x_1, \ldots, \R x_{2n+2})$
by its opposite if necessary, we may assume that
\begin{equation} \label{even sign}
   \sgn \Bigl(\prod_{s=0}^{n} \omega_{2s, 2s+n} \Bigr)
   = \sgn \Bigl(\prod_{s=0}^{n} \tilde \omega_{2s, 2s+n} \Bigr).
\end{equation}

Under this assumption we will show that
the scale factors~(\ref{scale factors}) are all real,
so the two configurations are equivalent over $\R$.
This will complete the proof of
Theorem~\ref{Main Thm}(ii) for even~$n$.

To prove $\lambda_i$ real, we must prove $\lambda_i^2$ positive.
From~(\ref{scale factors}),
\begin{equation*}
   \lambda_i^2 = \prod_{r=0}^n
   \Bigl( \frac{\tilde \omega_{i+rn, i+(r+1)n}}
   {\omega_{i+rn, i+(r+1)n}} \Bigr)^{(-1)^r},
   \qquad
   \sgn(\lambda_i^2) = \sgn \Bigl( 
   \prod_{j \equiv i \mod 2}
   \tilde \omega_{j, j+n} /
   \omega_{j, j+n} \Bigr).
\end{equation*}
For $i$ even, this is positive by~(\ref{even sign}).
To prove it positive for $i$ odd, we must prove that
\begin{equation} \label{odd sign}
   \sgn \Bigl(\prod_{s=0}^{n} \omega_{2s+1, 2s+1+n} \Bigr)
   = \sgn \Bigl(\prod_{s=0}^{n} \tilde \omega_{2s+1, 2s+1+n} \Bigr).
\end{equation}
Check that
\begin{equation} \label{epsilon c}
   \prod_{i=0}^n c_i =
   \Bigl( \prod_{i=0}^n \omega_{i, i+n+1} \Bigr)^2
   \Bigl( \prod_{i=0}^{2n+1} \omega_{i, i+n} \Bigr)^{-1}.
\end{equation}
Because the two configurations have the same cross-ratios, we must have
\begin{equation*}
   \sgn \Bigl( \prod_{i=0}^{2n+1} \omega_{i, i+n} \Bigr) =
   \sgn \Bigl( \prod_{i=0}^{2n+1} \tilde \omega_{i, i+n} \Bigr).
\end{equation*}
Therefore~(\ref{even sign}) implies~(\ref{odd sign}).

\medbreak \noindent {\bf The case of $n$ odd.}
Here $\GCD(n, 2n+2) = 1$, so in contrast with the case that $n$ is even,
the entire set of lines in an $(n, 2n+2)$-configuration
may be listed with increments of~$n$:
\begin{equation*}
   \bigl\{ \K x_0, \K x_1, \K x_2, \ldots, \K x_{2n+1} \bigr\} =
   \bigl\{ \K x_0, \K x_n, \K x_{2n}, \ldots, \K x_{(2n+1)n} \bigr\}.
\end{equation*}

Set $\lambda_0 := 1$ and define $\lambda_n, \ldots, \lambda_{(2n+1)n}$
recursively by $\lambda_{rn} := \lambda_{(r-1)n}
\tilde\omega_{(r-1)n, rn} / \omega_{(r-1)n, rn}$
for $1 \le r \le 2n+1$.  This leads to
\begin{equation*}
   \lambda_{rn} = \prod_{s=1}^r
   \Bigl( \frac{\tilde\omega_{(s-1)n, sn}}
   {\omega_{(s-1)n, sn}} \Bigr)^{(-1)^{r-s}}.
\end{equation*}
At this point we have $\lambda_i \lambda_{i+n} =
\tilde \omega_{i, i+n} / \omega_{i, i+n}$
except possibly at $i \equiv -n$ modulo $2n+2$, where
\begin{equation*}
   \lambda_{-n} \lambda_0 = \prod_{s=1}^{2n+1}
   \Bigl( \frac{\tilde\omega_{(s-1)n, sn}}
   {\omega_{(s-1)n, sn}} \Bigr)^{(-1)^{s-1}}.
\end{equation*}
We claim that this is in fact
$\tilde\omega_{-n, 0} / \omega_{-n, 0}$.
The proof reduces to proving that the expression
\begin{equation*}
   \frac{\omega_{0, n} \omega_{2n, 3n} \cdots \omega_{2n^2, (2n+1)n}}
   {\omega_{n, 2n} \omega_{3n, 4n} \cdots \omega_{(2n+1)n, (2n+2)n}} =
   \frac{\omega_{0, n} \omega_{2, n+2} \cdots \omega_{2n, 3n}}
   {\omega_{1, n+1} \omega_{3, n+3} \cdots \omega_{2n+1, 3n+1}}
\end{equation*}
does not change if all the $\omega$'s are replaced with $\tilde\omega$'s.
This is true, because as the reader may check, it is equal to
\begin{equation*}
   \frac{c_0 c_2 \cdots c_{n-1}} {c_1 c_3 \cdots c_n},
\end{equation*}
and the $\omega$'s and $\tilde\omega$'s have the same cross-ratios.

Thus we may replace $x_i$ by $\lambda_i x_i$, giving
$\omega_{i, i+n} = \tilde\omega_{i, i+n}$ for all~$i$.
Then by equality of cross-ratios,
$\omega_{i, i+n+1} \omega_{i+1, i+n+2}$ is equal to
$\tilde\omega_{i, i+n+1} \tilde\omega_{i+1, i+n+2}$,
or, equivalently,
\begin{equation} \label{n odd scalar}
   \bigl(\tilde\omega_{i, i+n+1} / \omega_{i, i+n+1} \bigr)^{(-1)^i}
\end{equation}
is independent of~$i$.

If $\K = \C$, let $\chi$ be a square root of~(\ref{n odd scalar}).
If $\K = \R$, let $\chi$ be a square root of its magnitude.
Observe that for any $\delta$, the rescaling
$x_i \mapsto \delta^{(-1)^i} x_i$
leaves $\omega_{i, i+n}$ unchanged and multiplies
$\omega_{i, i+n+1}$ by $\delta^{2(-1)^i}$.
Therefore in the case $\K = \C$, replacing
the $x_i$ by $\chi^{(-1)^i} x_i$ gives
$\omega_{ij} = \tilde\omega_{ij}$, proving
the two configurations equivalent.
In the case $\K = \R$, the same argument
proves them equivalent when~(\ref{n odd scalar}) is positive.

In the case that $\K = \R$ and~(\ref{n odd scalar}) is negative,
this argument leaves us with $\omega_{i, i+n} = \tilde \omega_{i, i+n}$
and $\omega_{i, i+n+1} = -\tilde \omega_{i, i+n+1}$.
Replacing the $x_i$ by the additional rescaling $(-1)^i x_i$
then gives $\omega_{ij} = -\tilde\omega_{ij}$,
proving the configurations opposite.

\subsection{Proof of Theorem~\ref{Main Thm}{\rm (iii)}}
\label{MTiii}

Suppose that $c_0, \ldots, c_n$ are non-zero
scalars in $\K$ satisfying~(\ref{GenEq}), and extend them
$(n+1)$-periodically to $(c_i)_{i \in \Z}$.
We will construct an $(n, 2n+2)$-\Lc\
$(\K x_0, \K x_1, \ldots, \K x_{2n+1})$
having the given scalars $c_i$ as cross-ratios.

As an intermediate step, we construct from the $c_i$ scalars
$\omega_{ij}$ that will be equal to $\omega(x_i, x_j)$.
By~(\ref{omega relations}) and the Lagrangian condition,
it is only necessary to construct a $(2n+2)$-periodic sequence
$\omega_{i, i+n}$ and an $(n+1)$-periodic sequence $\omega_{i, i+n+1}$
such that the $c_i$ are given by~(\ref{L(n, 2n+2) CRs form 2}):
then $\omega_{i, i+n+2} = \omega_{i-n, i}$,
and the remaining $\omega_{ij}$ are~$0$.

\medbreak \noindent {\bf The $\omega_{ij}$ for $n$ even.}
Over $\C$, set $\omega_{i, i+n} := 1$ for all~$i$ and fix an $(n+1)$-periodic
sequence $\sigma_i$ of square roots of the $c_i$: $\sigma_i^2 = c_i$.
It is then simple to check that~(\ref{L(n, 2n+2) CRs form 2})
is satisfied if we set
\begin{equation} \label{a_i's}
   \omega_{i, i+n+1} :=
   \frac{\sigma_i \sigma_{i+2} \cdots \sigma_{i+n}}
   {\sigma_{i+1} \sigma_{i+3} \cdots \sigma_{i+n-1}}.
\end{equation}

Over $\R$, the same process works if $c_0 c_1 \cdots c_n$
is positive: the individual $\sigma_i$ may not be real, but
the $\omega_{i, i+n+1}$ are because their squares are positive.

If $c_0 c_1 \cdots c_n$ is negative, it suffices to modify the construction
as follows: set $\omega_{i, i+n} := (-1)^i$, let $\sigma_i$ be an
$(n+1)$-periodic sequence of square roots of the $-c_i$, and
again define the $\omega_{i, i+n+1}$ by~(\ref{a_i's}).

\medbreak \noindent {\bf The $\omega_{ij}$ for $n$ odd.}
Observe that in this case, (\ref{L(n, 2n+2) CRs form 2}) implies
$\prod_{i=0}^{2n+1} \omega_{i, i+n}^{(-1)^i} = \prod_0^n c_i^{(-1)^i}$.

We begin with an asymmetric choice
of the $\omega_{ij}$ that works over any field.
Define $\omega_{0, n} := \prod_0^n c_i^{(-1)^i}$, and
for $1 \le i \le 2n+1$, set $\omega_{i, i+n} := 1$.
Check that it suffices to set $\omega_{0, n+1} := 1$, $\omega_{1, n+2} := c_0$,
and in general, for $0 \le k \le \half (n-1)$,
\begin{equation*}
   \omega_{2k, 2k+n+1} := \frac{c_1 c_3 \cdots c_{2k-1}}{c_0 c_2 \cdots c_{2k-2}},
   \qquad
   \omega_{2k+1, 2k+n+2} := \frac{c_0 c_2 \cdots c_{2k}}{c_1 c_3 \cdots c_{2k-1}}.
\end{equation*}

Over $\C$ it is possible to choose the $\omega_{ij}$ more symmetrically,
as described in Section~\ref{Outline}.
Fix an $(n+1)$-periodic sequence $\eta_i$ such that $\eta_i^{2n+2} = c_i$.
Define the $\omega_{i, i+n}$ by
\begin{equation*}
   \mu := \frac{\eta_0 \eta_2 \cdots \eta_{n-1}}{\eta_1 \eta_3 \cdots \eta_n},
   \qquad \omega_{i, i+n} := \mu^{(-1)^i}.
\end{equation*}
The reader may check that then~(\ref{L(n, 2n+2) CRs form 2}) is satisfied by
fixing $\omega_{0, n+1}$ arbitrarily and setting
\begin{equation*}
   \omega_{2k, 2k+n+1} :=
   \mu^{4k} \omega_{0, n+1}
   \frac{c_1 c_3 \cdots c_{2k-1}}{c_0 c_2 \cdots c_{2k-2}},
   \qquad
   \omega_{2k+1, 2k+n+2} :=
   \mu^{-4k-2} \omega_{0, n+1}^{-1}
   \frac{c_0 c_2 \cdots c_{2k}}{c_1 c_3 \cdots c_{2k-1}}
\end{equation*}
for $0 \le k \le \half (n-1)$.
Requiring $a_0 a_2 \cdots a_{n-1} = a_1 a_3 \cdots a_n$
leads to the most symmetric choice:
\begin{equation*}
   \omega_{i, i+n+1} \,:=\,
   \frac{\eta_i^n \eta_{i+2}^{n-4} \cdots \eta_{i+n-1}^{-n+2}}
   {\eta_{i+1}^{n-2} \eta_{i+3}^{n-6} \cdots \eta_{i+n}^{-n}}
   \,=\, \prod_{j=0}^n \eta_{i+j}^{(-1)^j(n-2j)}.
\end{equation*}

\medbreak \noindent {\bf The $x_i$ for $n$ arbitrary.}
Suppose now that scalars $\omega_{ij}$ in $\K$ have been
chosen so as to satisfy~(\ref{L(n, 2n+2) CRs form 2}).
From these scalars we will construct an $(n, 2n+2)$-\Lc\
$(\K x_0, \ldots, \K x_{2n+1})$ over $\K$ which satisfies
$\omega(x_i, x_j) = \omega_{ij}$, and therefore has
the given cross-ratios $c_0, \ldots, c_n$.

The representatives $x_i$ are almost standard:
we set $x_i := e_i$ for $1 \le i \le n$, and
\begin{align*}
   & x_{1+n} := \omega_{1, 1+n} f_1, \qquad
      x_{2+n} := \omega_{1, 2+n} f_1 + \omega_{2, 2+n} f_2, \\[4pt]
   & x_{i+n} := \omega_{i-2, i+n} f_{i-2} +
   \omega_{i-1, i+n} f_{i-1} + \omega_{i, i+n} f_i,
   \quad 3 \le i \le n.
\end{align*}
It is immediate that with these definitions,
$\omega(x_i, x_j) = \omega_{ij}$ for $1 \le i, j \le 2n$.

The requirement that $\omega(x_i, x_j) = \omega_{ij}$
for $1 \le i \le 2n$ and $j = 2n+1$ or $2n+2$
now determines $x_{2n+1}$ and $x_0 = -x_{2n+2}$.
We find that
\begin{equation*}
   x_{2n+1} = \omega_{n-1, 2n+1} f_{n-1}
   + \omega_{n, 2n+1} f_n + \omega_{1+n, 2n+1} \sum_{i=1}^n
   (-1)^i \Bigl( \prod_{j=1}^i \omega_{j, j+n}^{-1} \Bigr) d_i e_i,
\end{equation*}
where $d_1 = 1$, $d_2 = \omega_{1, 2+n}$,
and the $d_i$ with $3 \le i \le n$ satisfy the recursion relation
\begin{equation} \label{det recursion}
   d_i = \omega_{i-1, i+n} d_{i-1} -
   \omega_{i-1, i+n-1} \omega_{i-2, i+n} d_{i-2}.
\end{equation}
In the same way we obtain
\begin{equation*}
   x_0 = -\omega_{0, n} f_n -
   \sum_{i=1}^n (-1)^i \Bigl( \prod_{j=1}^i \omega_{j, j+n}^{-1} \Bigr) d'_i e_i,
\end{equation*}
where $d'_1 = \omega_{0, 1+n}$, $d'_2 =
\omega_{0, 1+n} \omega_{1, 2+n} - \omega_{0, 2+n} \omega_{1, 1+n}$,
and the $d'_i$ with $3 \le i \le n$ also satisfy~(\ref{det recursion}).

In order to clarify~(\ref{det recursion}),
consider for any integers $0 \le j \le i \le n$
the following truncation of the tridiagonal
matrix $A$ in~(\ref{omega tridiag}):
\begin{equation*}
   A_{j, i} = \left(
   \begin{array}{cccccc}
   \omega_{j, j+n+1} &\ \ \ \omega_{j, j+n+2} &&& \\[4pt]
   \omega_{j+1, j+n+1} &\ \ \ \omega_{j+1, j+n+2} & \omega_{j+1, j+n+3} && \\[4pt]
   &\ddots&\ddots&\!\!\ddots&\\[4pt]
   &&\ \ \ \omega_{i-1, i+n-1} &\ \ \ \omega_{i-1, i+n} & \omega_{i-1, i+n+1} \\[4pt]
   &&&\ \ \ \omega_{i, i+n} & \omega_{i, i+n+1}
   \end{array}
   \right).
\end{equation*}
Let us write $\Delta_{j, i}$ for $\det(A_{j, i})$, and adopt the convention
$\Delta_{j, j-1} := 1$ and $\Delta_{j, j-2} := 0$.  It is clear that
the $\Delta_{j, i}$ satisfy a shifted version of~(\ref{det recursion}):
\begin{equation*}
   \Delta_{j, i} = \omega_{i, i+n+1} \Delta_{j, i-1} -
   \omega_{i, i+n} \omega_{i-1, i+n+1} \Delta_{j, i-2}.
\end{equation*}
Therefore $d_i = \Delta_{1, i-1}$ and
$d'_i = \Delta_{0, i-1}$ for $1 \le i \le n$.

The only remaining condition is $\omega(x_0, x_{2n+1}) = 0$.
After some simplification it reduces to
\begin{equation} \label{omega 0, 2n+1}
   \Delta_{0, n} - \omega_{0, n} \omega_{n+1, 2n+1} \Delta_{1, n-1} = 0.
\end{equation}
Recall the matrices $\Omega_n$ and $A$ from~(\ref{Omega}).
Because $A$ is $A_{0, n}$,
Proposition~\ref{pf Omega} and (\ref{pf det formula})
show that~(\ref{omega 0, 2n+1}) is equivalent to~(\ref{Thm1i formula}).
Because we assumed that the given $c_i$
satisfy~(\ref{GenEq}), these conditions hold.
This completes the proof of Theorem~\ref{Main Thm}.

\section{Normalized configurations}
\label{Normalizations}

As noted in Section~\ref{Even Complex Norms},
in this section we describe certain normalized choices
of representatives of $(n, 2n+2)$-configurations.
As in that section, let $(X_0, \ldots, X_{2n+1})$ be a generic
$(n, 2n+2)$-\Lc\ over $\K$ with representatives $x_0, \ldots, x_{2n+1}$,
symplectic products $\omega_{ij} := \omega(x_i, x_j)$, and cross-ratios
$c_0, \ldots, c_n$.

\subsection{The case of $n$ even and $\K = \C$}
\label{ECN proofs}

The results in this case were stated in Section~\ref{Even Complex Norms}.
Here we give their proofs.

\medbreak \noindent {\it Proof of Proposition~\ref{even complex norm}.\/}
By the constructions in Section~\ref{MTiii},
there exists a generic configuration
$(\C \tilde x_0, \ldots, \C \tilde x_{2n+1})$
with $\tilde\omega_{i, i+n} = 1$ and cross-ratios $c_0, \ldots, c_n$.
By Theorem~\ref{Main Thm}(ii), it is equivalent to
$(X_0, \ldots, X_{2n+1})$.  The images $x_0, \ldots, x_{2n+1}$
of $\tilde x_0, \ldots, \tilde x_{2n+1}$ under the equivalence
have $\omega_{i, i+n} = 1$ for all~$i$.

To see that there are exactly four such choices of $(x_i)_i$,
suppose that $(\lambda_i x_i)_i$ is another.
Observe that then $\lambda_{i+n} = \lambda_i^{-1}$,
so $\lambda_{i+rn} = \lambda_i^{(-1)^r}$.
But $\lambda_{i+n(n+1)} = \lambda_i$ by periodicity,
so $\lambda_i = \pm1$, whence $\lambda_{i+n} = \lambda_i$,
and $\lambda_i = \lambda_j$ for $i \equiv j$ modulo~2.
This proves~(i).  The remaining statements are immediate.
\hfill $\Box$

\medbreak\noindent {\it Proof of Theorem~\ref{EquivThm}.\/}
Part~(i) follows from Section~\ref{MTi}: by Lemma~\ref{pf det lemma},
for normalized representatives $(x_0, \ldots, x_{2n+1})$
the Pfaffian of the $\omega$-Gram matrix is, up to a sign,
$R_{n+1}(a_0, \ldots, a_n)$.
Parts~(ii) and~(iii) follow from Theorem~\ref{Main Thm}(ii) and~(iii)
and Proposition~\ref{even complex norm}(iii):
the cross-ratios determine $\pm(a_0, \ldots, a_n)$ and vice versa.
\hfill $\Box$

\subsection{The case of $n$ even and $\K = \R$}
\label{Even Real Norms}

Recall from Section~\ref{MTii} the sign invariants
\begin{equation*}
   \e_0 := \sgn \Bigl(\prod_{s=0}^{n} \omega_{2s, 2s+n} \Bigr), \qquad
   \e_1 := \sgn \Bigl(\prod_{s=0}^{n} \omega_{2s+1, 2s+1+n} \Bigr), \qquad
   \e_c := \sgn \Bigl(\prod_{i=0}^{n} c_i \Bigr)
\end{equation*}
of the configuration $(X_0, \ldots, X_{2n+1})$:
$\e_0$ is~(\ref{epsilon even}), $\e_1$ is the left side of~(\ref{odd sign}),
and by~(\ref{epsilon c}), $\e_c = \e_0 \e_1$.

\begin{prop} \label{even real norm}
For $n$ even and $\K = \R$, $(X_0, \ldots, X_{2n+1})$ admits exactly
four choices of representatives whose symplectic subdiameters are
$\omega_{i, i+n} = \e_{i \mod 2}$.  Fix such a choice, $x_0, \ldots, x_{2n+1}$,
and denote its symplectic diameters $\omega_{i, i+n+1}$ by $a^\R_i$.

\begin{enumerate}

\item[(i)]
In terms of $(x_i)_i$, the four choices are
as in Proposition~\ref{even complex norm}(i).
The first two have symplectic diameters $(a^\R_i)_i$,
and the second two have symplectic diameters $(-a^\R_i)_i$.
In particular, $\pm (a^\R_i)_i$ is an invariant of the configuration,
the collection of its {\em normalized real symplectic diameters.\/}

\smallbreak \item[(ii)]
If\/ $\e_c = 1$, then the normalized symplectic diameters
coincide with the normalized real symplectic diameters:
$\pm (a_i)_i = \pm (a^\R_i)_i$.

\smallbreak \item[(iii)]
If\/ $\e_c = -1$, then $\pm (a_i)_i = \pm \sqrt{-1}\, (a^\R_i)_i$.

\end{enumerate}
\end{prop}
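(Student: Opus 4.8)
The plan is to parallel the complex case (Proposition~\ref{even complex norm}) as closely as possible, adjusting only for the fact that over $\R$ the symplectic subdiameters cannot always be normalized to~$1$, but can be normalized to the signs $\e_{i \bmod 2}$. First I would establish the existence of a choice of representatives with $\omega_{i, i+n} = \e_{i \bmod 2}$ for all~$i$. The key observation is that since $\e_0$ and $\e_1$ record the signs of the two alternating products $\prod_s \omega_{2s, 2s+n}$ and $\prod_s \omega_{2s+1, 2s+1+n}$, and since $\GCD(n, 2n+2) = 2$ splits the indices into two parity classes of size $n+1$ (with $n+1$ odd), for each parity class the product of the required rescaling factors $\lambda_i \lambda_{i+n}$ around the cycle is positive. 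Concretely, I would imitate the scale-factor formula~(\ref{scale factors}) from Section~\ref{MTii}, replacing the target value $1$ by $\e_{i \bmod 2}$, and use the fact that $n+1$ is odd to solve for real $\lambda_i$: the sign obstruction to solving $\prod_r \lambda_{i+rn}^{(-1)^r} = (\text{target})$ vanishes precisely because the sign of the alternating product of subdiameters has been matched by the definition of $\e_0, \e_1$.

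Next, for the uniqueness statement (four choices), I would repeat verbatim the argument in the proof of Proposition~\ref{even complex norm}: if $(\lambda_i x_i)_i$ is a second normalized choice, then preserving $\omega_{i, i+n} = \e_{i \bmod 2}$ forces $\lambda_{i+n} = \lambda_i^{-1}$, hence $\lambda_{i+rn} = \lambda_i^{(-1)^r}$; periodicity $\lambda_{i + n(n+1)} = \lambda_i$ together with $n+1$ odd gives $\lambda_i = \pm 1$, so $\lambda_i = \lambda_j$ whenever $i \equiv j \pmod 2$. This yields exactly the four sign patterns $(x_i)_i$, $(-x_i)_i$, $((-1)^i x_i)_i$, $(-(-1)^i x_i)_i$, and tracking their effect on $\omega_{i, i+n+1}$ gives diameters $(a^\R_i)_i$ for the first two and $(-a^\R_i)_i$ for the second two, proving~(i).

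For parts~(ii) and~(iii), which compare the real normalization to the complex normalization of Proposition~\ref{even complex norm}, the main work is to relate the real scale factors to the complex ones. The idea is that the complex normalization sends $\omega_{i,i+n} \mapsto 1$, while the real one sends $\omega_{i,i+n} \mapsto \e_{i \bmod 2}$; the ratio of the two rescalings is, on each parity class, a global scalar whose square I can compute. When $\e_c = \e_0 \e_1 = 1$ the two parity classes carry compatible signs and the rescaling ratio can be taken real, so the normalized diameters agree up to sign, $\pm(a_i)_i = \pm(a^\R_i)_i$. When $\e_c = -1$, passing from subdiameters $\e_{i \bmod 2}$ to subdiameters $1$ requires a factor of $\sqrt{-1}$ on one parity class relative to the other, and by identity~(\ref{epsilon c}) this multiplies each diameter $a^\R_i$ by $\sqrt{-1}$, giving $\pm(a_i)_i = \pm\sqrt{-1}\,(a^\R_i)_i$.

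I expect the main obstacle to be the bookkeeping in parts~(ii) and~(iii): one must show not merely that the complex and real diameters differ by a scalar, but that the scalar is exactly $1$ or $\sqrt{-1}$ (up to the overall sign ambiguity), which requires carefully tracking how a $\sqrt{-1}$ inserted on one parity class propagates through the product formula $\omega_{i,i+n+1} = \prod_r \sigma_{i+rn}^{\pm 1}$ of~(\ref{a_i's}) and interacts with the sign invariant $\e_c$. The existence argument in part~(i) is comparatively routine once the parity-class structure and the oddness of $n+1$ are invoked.
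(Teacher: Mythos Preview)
Your proposal is correct, and the uniqueness argument for part~(i) is verbatim the paper's.  There are two points of contrast worth noting.

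For the \emph{existence} of real representatives with $\omega_{i,i+n}=\e_{i\bmod 2}$, you argue by directly rescaling given representatives via the formula~(\ref{scale factors}), checking that the resulting $\lambda_i$ are real because the sign obstruction $\sgn(\lambda_i^2)$ equals $\e_{i\bmod 2}\cdot\e_{i\bmod 2}=1$.  This works cleanly.  The paper instead takes an indirect route: it invokes the explicit constructions of Section~\ref{MTiii} to build a model configuration over~$\R$ with the desired subdiameters (all~$1$ if $\e_c=1$, alternating $\pm1$ if $\e_c=-1$), then uses Theorem~\ref{Main Thm}(ii) to identify it, up to opposites, with the given configuration and transports the representatives across.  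Your approach is more self-contained; the paper's has the minor advantage of reusing already-proven machinery rather than redoing a sign computation.

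For parts~(ii) and~(iii) you overestimate the difficulty.  Rather than tracking how a $\sqrt{-1}$ propagates through the product formula~(\ref{a_i's}), one can simply observe that once real-normalized representatives $x_i$ with $\omega_{i,i+n}=\e_{i\bmod 2}$ are in hand, a complex-normalized choice (subdiameters all~$1$) is given explicitly by $\sqrt{-1}^{(1-\e_{i\bmod 2})/2}\,x_i$.  The effect on a diameter $\omega_{i,i+n+1}$ is then immediate: it picks up the factor $\sqrt{-1}^{(1-\e_0)/2}\sqrt{-1}^{(1-\e_1)/2}$, which is $\pm1$ when $\e_c=\e_0\e_1=1$ and $\pm\sqrt{-1}$ when $\e_c=-1$.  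No appeal to~(\ref{a_i's}) or~(\ref{epsilon c}) is needed.
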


\begin{proof}
By the constructions in Section~\ref{MTiii},
there exists a generic configuration
$(\R \tilde x_0, \ldots, \R \tilde x_{2n+1})$ with cross-ratios $c_i$,
such that if $\e_c = 1$, then $\tilde \omega_{i, i+n} = 1$ for all~$i$,
and if $\e_c = -1$, then $\tilde \omega_{i, i+n} = (-1)^i$ for all~$i$.
By Theorem~\ref{Main Thm}(ii), this configuration is equivalent
either to $(X_0, \ldots, X_{2n+1})$ or its opposite, and so,
replacing $(\R \tilde x_0, \ldots, \R \tilde x_{2n+1})$
by its opposite if necessary, we may assume that it is
equivalent to $(X_0, \ldots, X_{2n+1})$.
Then, recalling that passage to opposites negates symplectic products,
we find that the images $x_0, \ldots, x_{2n+1}$
of $\tilde x_0, \ldots, \tilde x_{2n+1}$ under the equivalence
have $\omega_{i, i+n} = \e_{i \mod 2}$.

The proof of~(i) goes exactly as in Proposition~\ref{even complex norm}.
For~(ii) and~(iii), fix a choice of $\sqrt{-1}$.
In~(ii), if $\e_0$ and $\e_1$ are both~$-1$, then the representatives
of the complex normalization of Section~\ref{Even Complex Norms}
may be taken to be $\sqrt{-1}\, (x_i)_i$, while if they are both $1$,
then the real and complex normalizations coincide.

In~(iii), if $\e_0 = 1$ and $\e_1 = -1$,
the $\C$-normalized representatives may be taken to be
$x_i$ for $i$ even and $\sqrt{-1}\, x_i$ for $i$ odd, while if
$\e_0 = -1$ and $\e_1 = 1$, they may be taken to be
$\sqrt{-1}\, x_i$ for $i$ even and $x_i$ for $i$ odd.
To summarize, in all cases the $\C$-normalized representatives
are $\bigl( \sqrt{-1}^{(1-\e_{i \mod 2})/2} x_i \bigr)_i$.
The relation between $\pm (a_i)_i$ and $\pm (a^\R_i)_i$
now follows easily.
\end{proof}

\subsection{The case of $n$ odd and $\K = \C$}
\label{Odd Complex Norms}

Recall from Section~\ref{MTiii} that here
$\prod_{i=0}^{2n+1} \omega_{i, i+n}^{(-1)^i} = \prod_0^n c_i^{(-1)^i}$.

\begin{prop} \label{odd complex norm}
For $n$ odd, $\K = \C$, and $\mu$ any $(2n+2)^{\ndup}$ root
of\/ $\prod_0^n c_i^{(-1)^i}$, $(X_0, \ldots, X_{2n+1})$
admits exactly $(2n+2)$ choices of representatives
such that $\omega_{i, i+n} = \mu^{(-1)^i}$ for all~$i$
and\/ $\prod_0^n \omega_{i, i+n+1}^{(-1)^i} = 1$.
Fix such a choice, $x_0, \ldots, x_{2n+1}$,
and set $a_i := \omega_{i, i+n+1}$.

\begin{enumerate}

\item[(i)]
The $2n+2$ choices are $(\delta^{(-1)^i} x_i)_i$, where $\delta$
runs over the $(2n+2)^{\ndup}$ roots of unity.

\smallbreak \item[(ii)]
The symplectic diameters corresponding to
any given choice of $\delta$ are $(\delta^{2 (-1)^i} a_i)_i$.

\smallbreak \item[(iii)]
The cross-ratios of the configuration are $c_i = \mu^{2 (-1)^i} a_i a_{i+1}$.

\end{enumerate}
\end{prop}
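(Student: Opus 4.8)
The plan is to mirror the proof of Proposition~\ref{even complex norm}: obtain existence from the symmetric construction of Section~\ref{MTiii} together with Theorem~\ref{Main Thm}(ii), and then count the normalizations by analyzing which rescalings preserve both defining conditions. The essential structural difference from the even case is that here the subdiameter condition alone is \emph{not} enough to rigidify the rescalings, and the auxiliary condition $\prod_0^n \omega_{i,i+n+1}^{(-1)^i} = 1$ does the remaining work.

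For existence I would invoke the most symmetric construction for $n$ odd over $\C$ in Section~\ref{MTiii}: fixing an $(n+1)$-periodic sequence $\eta_i$ with $\eta_i^{2n+2} = c_i$ produces a generic configuration with cross-ratios $c_0, \ldots, c_n$, subdiameters $\omega_{i,i+n} = \mu^{(-1)^i}$ for $\mu := \prod_j \eta_j^{(-1)^j}$, and diameters satisfying $a_0 a_2 \cdots a_{n-1} = a_1 a_3 \cdots a_n$, which is exactly $\prod_0^n \omega_{i,i+n+1}^{(-1)^i} = 1$. Since $\mu^{2n+2} = \prod_0^n c_i^{(-1)^i}$, and multiplying a single $\eta_j$ by a $(2n+2)^{\ndup}$ root of unity multiplies $\mu$ by that root while fixing $\mu^{2n+2}$, the \emph{prescribed} $\mu$ is attained for a suitable choice of the $\eta_i$. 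By Theorem~\ref{Main Thm}(ii) this configuration is equivalent to $(X_0,\ldots,X_{2n+1})$ over $\C$; transporting its representatives through the equivalence (a symplectic map, hence preserving all $\omega_{ij}$) yields a normalization of the required form.

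For the count I would take two normalizations $(x_i)_i$ and $(\lambda_i x_i)_i$ of the same configuration for the same $\mu$, the $\lambda_i$ being nonzero and $(2n+2)$-periodic. Preserving the subdiameters forces $\lambda_i \lambda_{i+n} = 1$, hence $\lambda_{i+2n} = \lambda_i$; since $2n \equiv -2 \pmod{2n+2}$, this makes $\lambda_i$ depend only on the parity of $i$, say $\lambda_i = a$ for $i$ even and $a^{-1}$ for $i$ odd. The crux, and the feature I expect to be the main obstacle to getting the count right, is that this subdiameter condition alone leaves the \emph{full} one-parameter family $a \in \K^\times$ (because $\GCD(n, 2n+2) = 1$ collapses the periodicity to parity rather than to roots of unity, unlike the even case). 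The second normalization condition is what resolves this: imposing $\prod_0^n \omega_{i,i+n+1}^{(-1)^i} = 1$ on $(\lambda_i x_i)_i$ gives $\prod_{i=0}^n (\lambda_i \lambda_{i+n+1})^{(-1)^i} = 1$, and using that $n+1$ is even to reindex the shifted factor rewrites this as $\prod_{i=0}^{2n+1} \lambda_i^{(-1)^i} = 1$; substituting the parity form yields $a^{2n+2} = 1$. Thus $\lambda_i = \delta^{(-1)^i}$ with $\delta$ a $(2n+2)^{\ndup}$ root of unity, and conversely each such $\delta$ preserves both conditions and gives a distinct $x_0$, for exactly $2n+2$ normalizations. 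This proves~(i).

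Parts~(ii) and~(iii) are then short computations carried out with $n+1$ even. For~(ii), the diameter $\omega_{i,i+n+1}$ rescales by $\lambda_i \lambda_{i+n+1} = \delta^{(-1)^i}\delta^{(-1)^{i+n+1}} = \delta^{2(-1)^i}$, since $(-1)^{i+n+1} = (-1)^i$. For~(iii), I would substitute the subdiameters $\omega_{i-n,i} = \mu^{(-1)^{i-n}} = \mu^{-(-1)^i}$ and $\omega_{i+1,i+n+1} = \mu^{(-1)^{i+1}} = \mu^{-(-1)^i}$ into the cross-ratio formula~(\ref{L(n, 2n+2) CRs form 2}), obtaining $c_i = a_i a_{i+1}\big/\mu^{-2(-1)^i} = \mu^{2(-1)^i} a_i a_{i+1}$.
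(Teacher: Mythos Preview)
Your proof is correct and follows essentially the same approach as the paper: existence via the constructions of Section~\ref{MTiii} transported through the equivalence of Theorem~\ref{Main Thm}(ii), and the count via analysis of rescalings preserving the subdiameter condition, with the second normalization condition pinning $\delta$ to a $(2n+2)^{\thup}$ root of unity. The only cosmetic difference is that you invoke the ``most symmetric'' choice from Section~\ref{MTiii} to obtain both normalization conditions at once, whereas the paper first obtains the subdiameter condition and then observes that the remaining $\delta$-freedom can be used to arrange the second condition; your treatment of~(ii) and~(iii) is also more explicit than the paper's ``the remaining statements are clear.''
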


\begin{proof}
The discussion in Section~\ref{MTiii} shows that for any
$(2n+2)^{\ndup}$ root $\mu$ of $\prod_0^n c_i^{(-1)^i}$,
there exists a configuration $(\C \tilde x_0, \ldots, \C \tilde x_{2n+1})$ 
with $\tilde\omega_{i, i+n} = \mu^{(-1)^i}$ and cross-ratios $c_0, \ldots, c_n$.
By Theorem~\ref{Main Thm}(ii), it is equivalent to
$(X_0, \ldots, X_{2n+1})$.  The images $x_0, \ldots, x_{2n+1}$
of $\tilde x_0, \ldots, \tilde x_{2n+1}$ under the equivalence
have $\omega_{i, i+n} = \mu^{(-1)^i}$ for all~$i$.

For~(i), first check that if a renormalization $x_i \mapsto \lambda_i x_i$
preserves $\omega_{i, i+n}$ for all $i$, then it is of the form
$\lambda_i = \delta^{(-1)^i}$ for some~$\delta$.
Then check that $\prod_0^n \omega_{i, i+n+1}^{(-1)^i} = 1$
if and only if $\delta^{2n+2} = 1$.  The remaining statements are clear.
\end{proof}

We remark that the general rescaling $x_i \mapsto \lambda_i x_i$
going between normalizations as above with
different choices of $\mu$ is $\lambda_i := \xi^{(-1)^i (ip + q)}$, where
$\xi$ is any primitive $(2n+2)^{\ndup}$ root of unity and
$p$ and $q$ are arbitrary elements of $\Z_{2n+2}$.
It transforms $\omega_{i, i+n}$ from $\mu^{(-1)^i}$ to
$(\xi^{-pn} \mu)^{(-1)^i}$ and $\omega_{i, i+n+1}$ from $a_i$
to $(-1)^p \xi^{2(-1)^i (ip+q)} a_i$, i.e.,
\begin{equation*}
   \bigl( \mu, a_i \bigr)_i \mapsto
   \bigl( \xi^{-pn} \mu, (-1)^p\, \xi^{2(-1)^i (ip+q)} a_i \bigr)_i.
\end{equation*}

We will not formally state the specialization of Theorem~\ref{Main Thm}
corresponding to the normalization in Proposition~\ref{odd complex norm},
but let us describe the specialization of the relation~(\ref{GenEq}):
it becomes the vanishing of the quantity obtained from the product
$a_0 \cdots a_n$ by applying the ``$\mu$-cyclic replacement rule'':
replace cyclically adjacent pairs $a_i a_{i+1}$ by $-\mu^{-2(-1)^i}$.
For example, at $n=1$ and~$3$,
\begin{align*}
   & 0 = a_0 a_1 - \mu^{-2} - \mu^2, \\[4pt]
   & 0 = a_0 a_1 a_2 a_3 - \mu^{-2} (a_0 a_1 + a_2 a_3)
   + \mu^{-4} + \mu^4.
\end{align*}

\subsection{The case of $n$ odd and $\K = \R$}
\label{Odd Real Norms}

Here we have only found natural normalizations under certain positivity conditions.

\begin{prop} \label{odd real norm}
Suppose that $n$ is odd, $\K = \R$, and $\prod_0^n c_i^{(-1)^i}$
is positive, and let $\mu$ be its positive $(2n+2)^{\ndup}$ root.
Then $(X_0, \ldots, X_{2n+1})$ admits choices of representatives
such that $\omega_{i, i+n} = \mu^{(-1)^i}$ for all~$i$.

If both $c_0 c_2 \cdots c_{n+1}$ and $c_1 c_3 \cdots c_n$
are positive, then exactly two such choices satisfy in addition
$\prod_0^n \omega_{i, i+n+1}^{(-1)^i} = 1$.
Otherwise there is no such choice.
\end{prop}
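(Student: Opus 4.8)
The plan is to produce the subdiameter normalization by rescaling the given representatives directly, exactly in the spirit of the proof of Theorem~\ref{Main Thm}(ii) for odd~$n$, and then to count the residual freedom. Since $n$ is odd we have $\GCD(n, 2n+2) = 1$, so the indices $0, n, 2n, \ldots, (2n+1)n$ run through all of $\Z/(2n+2)$, and a rescaling $x_i \mapsto \lambda_i x_i$ achieving $\omega_{i, i+n} = \mu^{(-1)^i}$ amounts to solving the cyclic chain $\lambda_{(r-1)n}\,\lambda_{rn}\,\omega_{(r-1)n, rn} = \mu^{(-1)^{r-1}}$ (using $(-1)^{rn} = (-1)^r$). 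First I would set $\lambda_0 := 1$ and define the remaining $\lambda_{rn}$ recursively; all factors are real, so the $\lambda_{rn}$ are real. The only thing to check is consistency at the wrap-around $r = 2n+2$, which unrolls to the requirement $\lambda_{-n}\,\omega_{-n, 0} = \mu^{-1}$. I expect this to reduce, after telescoping, to the identity $\prod_{i=0}^{2n+1}\omega_{i, i+n}^{(-1)^i} = \prod_{0}^{n} c_i^{(-1)^i}$ recorded in Section~\ref{MTiii}, combined with $\mu^{2n+2} = \prod_0^n c_i^{(-1)^i}$. As this is an algebraic identity, and the hypothesis guarantees a positive real such $\mu$, the rescaling exists over $\R$; this proves the first assertion, with no further positivity needed.

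For the count, I would first pin down the residual freedom: if $x_i \mapsto \lambda_i x_i$ preserves all subdiameters $\omega_{i, i+n}$, then $\lambda_i \lambda_{i+n} = 1$, and the same argument as in the proof of Proposition~\ref{odd complex norm}(i) forces $\lambda_i = \delta^{(-1)^i}$ for a single $\delta$, now ranging over $\R^{*}$. Writing $a_i := \omega_{i, i+n+1}$ and $P := \prod_{0}^{n} a_i^{(-1)^i}$, and using $(-1)^{i+n+1} = (-1)^i$ (as $n+1$ is even), such a rescaling sends $a_i \mapsto \delta^{2(-1)^i} a_i$ and hence $P \mapsto \delta^{2(n+1)} P$. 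The extra condition $\prod_0^n \omega_{i, i+n+1}^{(-1)^i} = 1$ thus becomes $\delta^{2(n+1)} = P^{-1}$. Since the exponent $2(n+1)$ is even, this has a real solution $\delta$ if and only if $P > 0$, in which case there are exactly two, namely $\pm P^{-1/(2(n+1))}$; these differ by a global sign $x_i \mapsto -x_i$ and so give two genuinely distinct choices of representatives. If $P < 0$ there is no real $\delta$, hence no such choice.

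It remains to express $\sgn(P)$ through the cross-ratios. From~(\ref{L(n, 2n+2) CRs form 2}), after substituting $\omega_{i-n, i} = \omega_{i+1, i+n+1} = \mu^{-(-1)^i}$, I obtain the algebraic identity $c_i = \mu^{2(-1)^i} a_i a_{i+1}$; since $\mu > 0$ this yields $\sgn(c_i) = \sgn(a_i)\,\sgn(a_{i+1})$. Setting $s_j := \sgn(a_j)$, the product of the even-indexed cross-ratios, $c_0 c_2 \cdots c_{n-1}$, has sign $\prod_{i \text{ even}} s_i s_{i+1}$; the pairs $(0,1), (2,3), \ldots, (n-1,n)$ tile $\{0, \ldots, n\}$, so this sign is $\prod_{j=0}^n s_j$. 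The product of the odd-indexed cross-ratios, $c_1 c_3 \cdots c_n$, is handled identically, the pairs $(1,2), \ldots, (n, n+1)$ again tiling $\{0, \ldots, n\}$ once modulo $n+1$, giving the same sign $\prod_{j=0}^n s_j$. Finally $\sgn(P) = \prod_{j=0}^n s_j$ as well, because $s_j^{(-1)^j} = s_j$. Hence $P > 0$ is equivalent to the positivity of either alternating-block product of cross-ratios, and since the standing hypothesis $\prod_0^n c_i^{(-1)^i} = (c_0 c_2 \cdots c_{n-1})/(c_1 c_3 \cdots c_n) > 0$ forces the two products to share a sign, the ``both positive / otherwise'' dichotomy of the statement follows.

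The routine telescoping in the existence step and the transformation law $P \mapsto \delta^{2(n+1)}P$ are straightforward. The hard part will be the sign bookkeeping with $n$ odd --- in particular verifying that $c_0 c_2 \cdots c_{n-1}$ and $c_1 c_3 \cdots c_n$ necessarily carry the common sign of $P$, which is exactly what makes the dichotomy clean and disposes of the degenerate-looking index $c_{n+1}$ in the statement by reading it cyclically as the even-indexed block.
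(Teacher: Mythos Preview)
Your proposal is correct and follows essentially the same approach as the paper: construct the subdiameter normalization by the recursive rescaling along the $n$-step cycle (exactly as in Section~\ref{MTii} for odd~$n$), identify the residual freedom as $x_i\mapsto\delta^{(-1)^i}x_i$, observe that this multiplies $P=\prod_0^n\omega_{i,i+n+1}^{(-1)^i}$ by $\delta^{2n+2}$, and conclude that $P>0$ is the exact obstruction. Your sign bookkeeping via $c_i=\mu^{2(-1)^i}a_ia_{i+1}$ is the same computation the paper summarizes in the single line $c_0c_2\cdots c_{n-1}=\mu^{n+1}\prod_0^n\omega_{i,i+n+1}$; you simply make explicit that the even and odd blocks automatically share the sign of~$P$, and correctly read the statement's $c_{n+1}$ cyclically.
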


\begin{proof}
Given any representatives $(x_i)_i$, rescale to
$\lambda_i x_i$, where $\lambda_0 := 1$ and
\begin{equation*}
   \lambda_{rn} \lambda_{(r+1)n} \omega_{rn, (r+1)n} = \mu^{(-1)^i}
\end{equation*}
for $0 < r < 2n+2$.  Check that this proves the first paragraph.

In the second paragraph we can only use further rescalings
preserving the subdiameters: $x_i \mapsto \delta^{(-1)^i} x_i$.
Such rescalings multiply $\prod_0^n \omega_{i, i+n+1}^{(-1)^i}$
by $\delta^{2n+2}$, so we can choose $\delta$ to make the product~$1$
if and only if $\prod_0^n \omega_{i, i+n+1}$ is positive.
To complete the proof, observe that $c_0 c_2 \cdots c_{n-1}$
is $\mu^{n+1} \prod_0^n \omega_{i, i+n+1}$.
There are two choices because the sign of $\delta$ is irrelevant.
\end{proof}

\section{Symmetric linear difference equations and the closure of $\cL_{n, N}(\K)$} \label{SLDEs}

In this section we present general results relating Lagrangian configurations
to non-degenerate symmetric linear difference equations of degree~$2n$.
The solution space of such an equation has a natural
symplectic form, generalizing the Wronski determinant.
When the equation has $N$-periodic coefficients and
monodromy~$-\Id$, there is a simple way to construct
a particular \Lc\ in its solution space.
This yields a projection from the space of all such equations
to equivalence classes of Lagrangian configurations.

\subsection{Linear difference operators}
\label{Lin Diff Ops}

Let $T$ be the {\it shift operator,\/} acting on infinite sequences
$(V_i)_{i\in\Z}$ by $(TV)_i := V_{i-1}$.
A {\em linear difference operator over\/}~$\K$
is a polynomial expression in $T$ and its inverse,
\begin{equation} \label{GenOp}
   A = a^n\, T^n + a^{n-1}\, T^{n-1} + \cdots + a^m\, T^m,
\end{equation}
where $m \le n$ are arbitrary integers and the coefficients~$a^\ell$
are sequences $(a^\ell_i)_{i\in\Z}$ of $\K$-scalars.
Such operators act on sequences $(V_i)$
of $\K$-scalars, the coefficients
acting by multiplication: $(a V)_i := a_i V_i$.

\begin{itemize}
\item
$A$ is said to be of {\em order\/} $n - m$
if both $a^m$ and $a^n$ are non-zero.

\smallbreak \item
$A$ is said to be {\em non-degenerate\/}
if both $a^m_i$ and $a^n_i$ are non-zero for all~$i$.

\smallbreak \item
$A$ is said to be {\em $N$-periodic\/}
if $a^\ell_i = a^\ell_{i+N}$ for all~$\ell$ and~$i$.

\end{itemize}

\begin{defn}
The {\it adjoint\/} $A^*$ of a linear difference operator $A$ is defined by
$$
   V \cdot (A^* W) = (AV) \cdot W,
$$
where $V \cdot W := \sum_{i\in\Z}V_i W_i$, an inner product
on scalar sequences with only finitely many non-zero terms.
\end{defn}

It is simple to check that $T^* = T^{-1}$.  This is the discrete analog
of the fact that translation is the exponential of the derivation
$\frac{d}{dx}$, and $\frac{d}{dx}^*=-\frac{d}{dx}$.
It is also clear that $(AB)^* = B^* A^*$ for any operators $A$ and $B$.
In particular, writing $(T^{\ell'} a^\ell)$ for the multiplication operator
$(T^{\ell'} a^\ell)_i = a^\ell_{i-\ell'}$, one obtains the following lemma.

\begin{lem}
$\bigl( \sum_{\ell = m}^n a^\ell\, T^\ell \bigr)^* \,=\,
\sum_{\ell = m}^n (T^{-\ell} a^\ell)\, T^{-\ell}$.
\end{lem}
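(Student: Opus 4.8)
The plan is to prove the formula for the adjoint by combining the two general facts already established in the paragraph preceding the lemma, namely that $T^* = T^{-1}$ and that $(AB)^* = B^* A^*$, together with the computation of the adjoint of a pure multiplication operator. First I would observe that the adjoint distributes over finite sums: since $V \cdot (A^* W) = (AV)\cdot W$ is linear in $A$, we have $(\sum_\ell a^\ell T^\ell)^* = \sum_\ell (a^\ell T^\ell)^*$, so it suffices to compute the adjoint of each summand $a^\ell T^\ell$ separately.

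Next I would apply the product rule $(AB)^* = B^* A^*$ to the factorization $a^\ell T^\ell$, writing the scalar sequence $a^\ell$ as the multiplication operator $B = a^\ell$ and $A = T^\ell$. This gives $(a^\ell T^\ell)^* = (T^\ell)^* (a^\ell)^*$. Using $T^* = T^{-1}$, and hence $(T^\ell)^* = T^{-\ell}$, the first factor is immediate. For the second factor, the key computation is that the adjoint of a multiplication operator by a sequence $b = (b_i)$ is multiplication by the same sequence: from $V \cdot (b W) = \sum_i V_i b_i W_i = (bV)\cdot W$ one reads off $b^* = b$. Thus $(a^\ell)^* = a^\ell$, and so far $(a^\ell T^\ell)^* = T^{-\ell}\, a^\ell$, where the right-hand side is the composite operator "shift by $-\ell$, then multiply by $a^\ell$".

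The one genuinely careful step will be rewriting $T^{-\ell} a^\ell$ in the normal form "multiplication operator times power of $T$" that appears in the statement, since the multiplication operator and the shift do not commute. The plan is to push the multiplication to the left of the shift: for any sequence $b$ and integer $\ell$, the operator $T^{-\ell} b$ acts by $(T^{-\ell} b\, W)_i = (b W)_{i+\ell} = b_{i+\ell} W_{i+\ell} = b_{i+\ell} (T^{-\ell} W)_i$, which is precisely multiplication by the shifted sequence $(b_{i+\ell})_i$ followed by $T^{-\ell}$. Recognizing $(a^\ell_{i+\ell})_i$ as the sequence $(T^{-\ell} a^\ell)$ in the paper's notation $(T^{\ell'} a^\ell)_i = a^\ell_{i-\ell'}$ (here with $\ell' = -\ell$), this turns $T^{-\ell} a^\ell$ into $(T^{-\ell} a^\ell)\, T^{-\ell}$. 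Summing over $\ell$ from $m$ to $n$ then yields exactly the claimed identity. The main obstacle, such as it is, is purely bookkeeping: keeping the index shift in $(T^{\ell'} a^\ell)_i = a^\ell_{i-\ell'}$ straight so that the sign of the exponent in the shifted coefficient comes out correctly; there is no conceptual difficulty once the commutation relation between multiplication and $T$ is recorded.
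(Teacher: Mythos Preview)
Your argument is correct and is exactly the approach the paper has in mind: it states the lemma immediately after recording $T^* = T^{-1}$ and $(AB)^* = B^* A^*$, leaving the verification you have written out to the reader. (One harmless slip: you label the factors as $B = a^\ell$, $A = T^\ell$ but then take the adjoint of $a^\ell T^\ell = BA$ rather than $AB$; the resulting formula $(a^\ell T^\ell)^* = (T^\ell)^*(a^\ell)^*$ is of course correct either way.)
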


\begin{defn}
If an operator $A$ satisfies $A = A^*$, it is
{\em self-adjoint,\/} or {\em symmetric.\/}
In this case, for some $n \ge 0$ there exist
sequences $a^0, \ldots, a^n$ such that
\begin{equation} \label{NashGenOp}
   A = a^0 + \sum_{\ell = 1}^n
   \bigl( a^\ell\,T^\ell + (T^{-\ell} a^\ell)\, T^{-\ell} \bigr).
\end{equation}
\end{defn}

\begin{rem}
The spectral theory of linear difference operators is quite similar to
that of linear differential operators; see~\cite{Kri} and references therein.
Operators with periodic or antiperiodic solutions play a special role in~\cite{Kri},
where they are called ``superperiodic''.
\end{rem}

\subsection{Linear difference equations}

The {\em linear difference equation\/} corresponding to
a linear difference operator $A$ is $AV = 0$.
We denote the space of solutions of this equation,
the kernel of $A$, by $\cK(A)$:
\begin{equation*}
   \cK(A) := \{ V:\, AV = 0 \}.
\end{equation*}

\begin{lem} \label{IC lemma}
Let $A$ be a non-degenerate linear difference operator over $\K$ of order~$p$.
For any~$i_0 \in \Z$ and any $\K$-scalars $c_{i_0+1}, \cdots, c_{i_0 + p}$,
there is a unique solution $(V_i)$ of the equation $A V = 0$
satisfying the initial conditions $V_i = c_i$ for $i_0 < i \le i_0 + p$.
In particular, $\cK(A)$ is a $p$-dimensional vector space over $\K$.
\end{lem}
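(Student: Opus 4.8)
The plan is to recognize the equation $AV=0$ as a two-sided scalar recurrence and to propagate the prescribed initial data in both directions, with non-degeneracy providing exactly the invertibility needed at each end.

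First I would expand the equation explicitly. Writing $A = a^n\,T^n + \cdots + a^m\,T^m$ with $p = n-m$, the condition $(AV)_i = 0$ becomes
\[
   \sum_{\ell = m}^n a^\ell_i\, V_{i - \ell} = 0
\]
for every $i \in \Z$, using $(T^\ell V)_i = V_{i-\ell}$. For each fixed $i$ this is a single linear relation among the $p+1$ consecutive values $V_{i-n}, \ldots, V_{i-m}$.

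Next I would use non-degeneracy to solve this recurrence in both directions. Since $a^m_i \neq 0$ for all~$i$, the relation can be solved for the top term $V_{i-m}$ in terms of the lower ones, and since $a^n_i \neq 0$, it can be solved for the bottom term $V_{i-n}$ in terms of the higher ones. Starting from the prescribed window $V_{i_0+1}, \ldots, V_{i_0+p}$, the relation at index $i = i_0 + m + p + 1$ reads off exactly this window in its lower terms and thus determines $V_{i_0+p+1}$ uniquely, while the relation at index $i = i_0 + n$ reads off the same window in its higher terms and determines $V_{i_0}$ uniquely. Iterating forward and backward, every $V_i$ is determined uniquely, which establishes both existence and uniqueness of the solution with the given initial conditions.

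Finally, for the dimension statement I would consider the evaluation map $\cK(A) \to \K^p$ sending $V$ to $(V_{i_0+1}, \ldots, V_{i_0+p})$. It is manifestly linear, injective by the uniqueness just proved, and surjective by existence, hence an isomorphism, giving $\dim_\K \cK(A) = p$. I do not anticipate a genuine obstacle here: the whole content is the observation that non-degeneracy \emph{at every index} is precisely the hypothesis that lets one invert the recurrence at both ends, so that data on any $p$ consecutive indices extends uniquely to a bi-infinite solution. The only point requiring care is the index bookkeeping, namely verifying that the chosen forward and backward steps each involve precisely the current window of $p$ known values.
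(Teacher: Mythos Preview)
Your argument is correct and is exactly the standard unwinding of why the paper calls the proof ``immediate'': non-degeneracy lets you solve the relation $(AV)_i=0$ for either extreme term, so the initial window propagates uniquely in both directions, and every equation $(AV)_i=0$ is used exactly once in this process and hence is satisfied by construction. Your index bookkeeping is right; the forward step at $i=i_0+n+1$ and the backward step at $i=i_0+n$ each read precisely the known window, and iterating covers all of $\Z$.
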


The proof of this lemma is immediate.
Note that the symmetric operator~(\ref{NashGenOp})
is non-degenerate if and only if $a^n_i \not= 0$ for all~$i$.
Let us write the equation $AV = 0$ explicitly in this case:
\begin{equation} \label{NashGenEq}
   a^n_i\, V_{i-n} + \cdots + a^1_i\, V_{i-1} + a^0_i\, V_i +
   a^1_{i+1}\, V_{i+1} + \cdots + a^n_{i+n}\, V_{i+n} = 0\,
   \mbox{\rm\ for all $i$.}
\end{equation}

\begin{cor} \label{V^i(A)}
Given a non-degenerate symmetric difference operator $A$
over $\K$ of degree~$2n$ as in~(\ref{NashGenEq}),
for all $i \in \Z$ there is a unique element $V^i(A)$
of the kernel $\cK(A)$ such that
\begin{equation} \label{V^i(A) eqn}
   \bigl( V^i_{i-n}(A),\, V^i_{i-n+1}(A),\, \ldots,\,
   V^i_{i+n-1}(A),\, V^i_{i+n}(A) \bigr)
   \,:=\, \Bigl( -\frac{1}{a^n_i},\, 0,\, 0,\, \ldots,\,
   0,\, \frac{1}{a^n_{i+n}} \Bigr).
\end{equation}
For any~$i$, $\bigl\{ V^{i+1}(A),\, V^{i+2}(A),\,
\ldots,\, V^{i+2n}(A) \bigr\}$ is a basis of\/ $\cK(A)$.
\end{cor}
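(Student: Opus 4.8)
The statement has two parts: the existence and uniqueness of $V^i(A)$, and then the basis assertion. The plan is to deduce both from Lemma~\ref{IC lemma}, which tells us that $\cK(A)$ is $2n$-dimensional and that a solution is pinned down by any $2n$ consecutive values.

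For the first part, I would begin by noting that the prescription~(\ref{V^i(A) eqn}) fixes the $2n+1$ consecutive values $V^i_{i-n}, \ldots, V^i_{i+n}$, which is one more datum than a solution needs. So I would apply Lemma~\ref{IC lemma} to the $2n$ consecutive values $V^i_{i-n+1} = \cdots = V^i_{i+n-1} = 0$ and $V^i_{i+n} = 1/a^n_{i+n}$ (at the indices $i-n+1, \ldots, i+n$) to obtain a unique solution, and then check that it automatically satisfies the remaining condition $V^i_{i-n} = -1/a^n_i$. This check comes from writing the defining equation~(\ref{NashGenEq}) at index~$i$: since $A$ is non-degenerate, $a^n_i \neq 0$, so one may solve that equation for $V_{i-n}$, and substituting the prescribed zeros together with $V_{i+n} = 1/a^n_{i+n}$ collapses the right-hand side to $-(1/a^n_i)(a^n_{i+n} \cdot 1/a^n_{i+n}) = -1/a^n_i$, exactly as required. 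Uniqueness is then immediate, since any solution realizing all $2n+1$ values in particular realizes the $2n$ used above. I expect the only delicate point here to be recognizing that the system is overdetermined and that the single compatibility condition is precisely the equation at index~$i$, which holds by the $-1+1=0$ cancellation built into~(\ref{V^i(A) eqn}).

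For the basis statement, since $\dim \cK(A) = 2n$ by Lemma~\ref{IC lemma}, it suffices to prove that $V^{i+1}(A), \ldots, V^{i+2n}(A)$ are linearly independent. I would use the evaluation isomorphism $V \mapsto (V_{i+1}, \ldots, V_{i+2n})$ from $\cK(A)$ to $\K^{2n}$ and show that the $2n \times 2n$ matrix $M$ with entries $V^{i+a}_{i+b}$, $1 \le a,b \le 2n$, is invertible. The key input is the ``bump'' shape of each $V^j$: its value is $-1/a^n_j$ at index $j-n$, it is $1/a^n_{j+n}$ at index $j+n$, and it vanishes at every index strictly between. Tracking where these windows meet the evaluation range $\{i+1, \ldots, i+2n\}$, one finds that $M$ is block anti-triangular: the top-left and bottom-right $n \times n$ blocks vanish identically, the bottom-left block is lower triangular, and the top-right block is upper triangular, the latter two having nonzero diagonal entries of the form $\pm 1/a^n_\bullet$. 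Hence $\det M \neq 0$. Equivalently, one can peel the relation $\sum_a \alpha_{i+a} V^{i+a} = 0$ in two stages: evaluating at the left-half indices $i+1, \ldots, i+n$ forces $\alpha_{i+n+1}, \ldots, \alpha_{i+2n}$ to vanish via the lower-triangular block, and then evaluating at the right-half indices $i+n+1, \ldots, i+2n$ kills the remaining coefficients via the upper-triangular block.

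The main obstacle in the second part is that the solutions $V^j$ are \emph{not} compactly supported: outside the window $[j-n, j+n]$ their values are dictated by the recursion and are in general nonzero, so a naive support argument fails. The point of the block anti-triangular structure is exactly that these uncontrolled entries all fall into the ``free'' triangular corners and never obstruct invertibility; the only entries one actually needs to control are the prescribed interior zeros and the nonzero boundary values $\pm 1/a^n_\bullet$ coming from~(\ref{V^i(A) eqn}).
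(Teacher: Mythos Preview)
Your proof is correct and follows the route the paper has in mind: the corollary is stated without proof, as an immediate consequence of Lemma~\ref{IC lemma}, and your argument supplies exactly the details one expects. Your block anti-triangular computation for the basis claim is in fact the same structure the paper exploits later in the proof of Lemma~\ref{SymLem}, where the $\W_A$-Gram matrix of $V^{j-n+1},\ldots,V^{j+n}$ is shown to have the form $\bigl(\begin{smallmatrix}0 & T\\ -T^t & 0\end{smallmatrix}\bigr)$ with $T$ upper triangular and invertible.
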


An important property of non-degenerate symmetric operators is the
existence of a natural symplectic form on their kernels.
Before giving the general result, we describe the simplest case.

\begin{ex}
The operator $\SLHS := T - a + T^{-1}$ is known as the discrete
Sturm-Liouville (or Hill, or Schr\"odinger) operator.
It is non-degenerate and symmetric,
and the classical {\em Wronski determinant\/}
$$
   \W(V, V') :=
   \biggl| \begin{array}{cc}
      V_{i-1} & V'_{i-1} \\[4pt]
      V_i & V'_i
   \end{array} \biggr|
$$
is a well-defined symplectic form on its kernel $\cK(\SLHS)$.
To understand this, check that when $\SLHS(V)$ and $\SLHS(V')$ are zero,
$\W(V, V')$ is independent of the choice of $i$:
$$
   \biggl| \begin{array}{cc}
      V_{i} & V'_{i} \\[4pt]
      V_{i+1} & V'_{i+1}
   \end{array} \biggr| \,=\,
   -\biggl| \begin{array}{cc}
      V_{i} & V'_{i} \\[4pt]
      V_{i-1} & V'_{i-1}
   \end{array} \biggr| -
      a_i \biggl| \begin{array}{cc}
      V_{i} & V'_{i} \\[4pt]
      V_{i} & V'_{i}
   \end{array} \biggr| \,=\, 
   \biggl| \begin{array}{cc}
      V_{i-1} & V'_{i-1} \\[4pt]
      V_i & V'_i
   \end{array} \biggr|.
$$
\end{ex}

\begin{rem}
The continuant~(\ref{ContEq})
may be viewed as an element of $\cK(\SLHS)$:
the Sturm-Liouville difference equation is
\begin{equation} \label{SLEq}
   V_{i-1} - a_iV_{i} + V_{i+1} = 0,
\end{equation}
and the initial conditions $(V_{-1},V_{0})=(0,1)$ give
$V_n = K_n(a_0, \ldots, a_{n-1})$.
In fact, continuants are the simplest members
of the series of {\em Andr\'e determinants,\/}
which satisfy linear difference equations of higher order;
see \cite{And} and also~\cite{SVRS}.
\end{rem}

We now define a multidimensional version of the Wronski determinant.
It is a discrete analog of the symplectic form on the solution space
of the symmetric linear differential equation studied in \cite{Ovs}.

\begin{defn}
Fix a non-degenerate symmetric linear difference operator $A$
over $\K$ of order~$2n$, as in~(\ref{NashGenOp}).
Given two elements $V$ and $V'$ of the kernel $\cK(A)$
and any $i \in \Z$, set
\begin{equation} \label{SymFEq}
   \W_A^i (V, V') := \sum_{\ell=1}^n \sum_{m=i+1}^{i+\ell}
   \, a^\ell_m\,
   \biggl| \begin{array}{cc}
      V_{m-\ell} & V'_{m-\ell} \\[4pt]
      V_m & V'_m
   \end{array} \biggr|.
\end{equation}
\end{defn}

\begin{lem} \label{SymLem}
\begin{enumerate}

\item[(i)]
$\W_A^i$ is independent of~$i$ and is a symplectic form $\W_A$ on $\cK(A)$.

\smallbreak \item[(ii)]
Writing $V^i$ for the solution $V^i(A)$ of Corollary~\ref{V^i(A)},
$\W_A(V^i, V^j) = V^i_j = -V^j_i$.

\smallbreak \item[(iii)]
In particular, $\W_A(V^i, V^j) = 0$ for $|i-j| < n$,
and $\W_A(V^{j-n}, V^j) = 1/a^n_j$.

\end{enumerate}
\end{lem}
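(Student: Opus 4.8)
The plan is to handle the three parts in the order (i)-independence, (ii), (iii), and then return to the non-degeneracy half of (i), since that is most cleanly deduced once (ii) and (iii) are in hand. Write $D^\ell_m := V_{m-\ell} V'_m - V_m V'_{m-\ell}$, so that $\W_A^i(V, V') = \sum_{\ell=1}^n \sum_{m=i+1}^{i+\ell} a^\ell_m D^\ell_m$. Bilinearity and skew-symmetry of $\W_A^i$ are immediate from this formula, so the two genuine points in (i) are independence of~$i$ and non-degeneracy.

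First I would prove independence of~$i$ by computing $\W_A^{i+1}(V, V') - \W_A^i(V, V')$ for $V, V' \in \cK(A)$. Under $i \mapsto i+1$ each inner sum $\sum_{m=i+1}^{i+\ell}$ telescopes at its two endpoints, leaving $\sum_{\ell=1}^n \bigl( a^\ell_{i+1+\ell} D^\ell_{i+1+\ell} - a^\ell_{i+1} D^\ell_{i+1} \bigr)$. Expanding the four determinants and collecting the terms carrying the factor $V_{i+1}$, their coefficient is $\sum_{\ell=1}^n \bigl( a^\ell_{i+1+\ell} V'_{i+1+\ell} + a^\ell_{i+1} V'_{i+1-\ell} \bigr)$, which by the equation $A V' = 0$ written at index $i+1$ (that is, (\ref{NashGenEq})) equals $-a^0_{i+1} V'_{i+1}$; symmetrically, the terms carrying $V'_{i+1}$ collapse to $a^0_{i+1} V_{i+1}$ via $A V = 0$. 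The two contributions cancel, giving $\W_A^{i+1} = \W_A^i$. The essential input is the self-adjointness of~$A$: it is exactly the matching of the coefficient $a^\ell$ appearing in $\W_A^i$ with the coefficient of $V_{i+1 \pm \ell}$ in~(\ref{NashGenEq}) that lets the difference equation absorb the telescoped boundary terms. I expect this bookkeeping to be the main obstacle, though it is computational rather than conceptual.

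For (ii) I would exploit the $i$-independence just proved and evaluate $\W_A^k(V^i, V')$ at the convenient value $k = i$, for an arbitrary $V' \in \cK(A)$. In the resulting sum every index $m$ satisfies $i+1 \le m \le i+n$ and every index $m-\ell$ satisfies $i+1-n \le m-\ell \le i$, so both fall inside the window $|\,\cdot\, - i| < n$ on which $V^i$ vanishes by Corollary~\ref{V^i(A)} --- with the single exception $\ell = n$, $m = i+n$, where $V^i_{i+n} = 1/a^n_{i+n}$. Hence only the term $-a^n_{i+n} V^i_{i+n} V'_i = -V'_i$ survives, so $\W_A(V^i, V') = -V'_i$. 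Taking $V' = V^j$ gives $\W_A(V^i, V^j) = -V^j_i$, and skew-symmetry of $\W_A$ then forces $V^i_j = -V^j_i$, which is (ii). Part (iii) is then an immediate reading of (ii) against the initial data of Corollary~\ref{V^i(A)}: $V^i_j = 0$ whenever $|i-j| < n$, and $V^{j-n}_j = 1/a^n_j$.

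Finally I would return to non-degeneracy. In the basis $\{V^{i+1}, \ldots, V^{i+2n}\}$ of $\cK(A)$ furnished by Corollary~\ref{V^i(A)}, the Gram matrix $G$ of $\W_A$ has entries $G_{pq} = \W_A(V^p, V^q) = V^p_q$, which by (iii) vanish for $|p-q| < n$. Splitting the index range into its lower and upper halves of size~$n$, both diagonal blocks vanish, so $G$ takes the block form $\bigl(\begin{smallmatrix} 0 & B \\ -B^T & 0 \end{smallmatrix}\bigr)$; ordering each half increasingly makes $B$ upper triangular with diagonal entries $\W_A(V^{i+k}, V^{i+n+k}) = 1/a^n_{i+n+k} \neq 0$. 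Thus $B$ is invertible, $\det G = \pm (\det B)^2 \neq 0$, and $\W_A$ is a symplectic form, completing (i).
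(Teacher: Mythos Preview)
Your proof is correct and follows essentially the same approach as the paper's: the independence argument via telescoping and the equation $AV=0$, the evaluation of $\W_A(V^i,V')$ by choosing a window where all but one entry of $V^i$ vanish, and the block upper-triangular Gram matrix for non-degeneracy are exactly what the paper does. The only cosmetic differences are that the paper computes $\W_A^i-\W_A^{i-1}$ (packaged as a $2\times 2$ determinant with a zero row) and evaluates at $k=i-1$ rather than $k=i$, but these are trivial shifts of index.
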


\begin{proof}
Let us use the shorthand $\Bigl| {j \atop k} \Bigr|$
for $\Bigl| {V_j\ V'_j \atop V_k\ V'_k} \Bigr|$.
It is helpful to expand $\W_A^i (V, V')$ as
\begin{align*}
   & \textstyle
   \biggl( a^1_{i+1}\, \Bigl| {i \atop i+1} \Bigr| \biggr)
   + \biggl( a^2_{i+1}\, \Bigl| {i-1 \atop i+1} \Bigr|
   + a^2_{i+2}\, \Bigl| {i \atop i+2} \Bigr| \biggr)
   + \biggl( a^3_{i+1}\, \Bigl| {i-2 \atop i+1} \Bigr|
   + a^3_{i+2}\, \Bigl| {i-1 \atop i+2} \Bigr|
   + a^3_{i+3}\, \Bigl| {i \atop i+3} \Bigr| \biggr) \\[6pt]
   & \textstyle
   + \cdots + \biggl( a^n_{i+1}\, \Bigl| {i+1-n \atop i+1} \Bigr|
   + a^n_{i+2}\, \Bigl| {i+2-n \atop i+2} \Bigr|
   + \cdots + a^n_{i+n}\, \Bigl| {i \atop i+n} \Bigr| \biggr).
\end{align*}
To prove that $\W_A^i$ is independent of $i$,
verify that $\W_A^i (V, V') - \W_A^{i-1} (V, V')$ is
\begin{equation*} \textstyle
   \biggl( a^1_{i+1}\, \Bigl| {i \atop i+1} \Bigr|
   - a^1_i\, \Bigl| {i-1 \atop i} \Bigr| \biggr)
   + \biggl( a^2_{i+2}\, \Bigl| {i \atop i+2} \Bigr|
   - a^2_i\, \Bigl| {i-2 \atop i} \Bigr| \biggr)
   + \cdots + \biggl( a^n_{i+n}\, \Bigl| {i \atop i+n} \Bigr|
   - a^n_i\, \Bigl| {i-n \atop i} \Bigr| \biggr).
\end{equation*}
Convert $\Bigl| {i-\ell \atop i} \Bigr|$ to $-\Bigl| {i \atop i-\ell} \Bigr|$
and use~(\ref{NashGenEq}), $\Bigl| {i \atop i} \Bigr| = 0$,
and $AV = AV' = 0$ to check that this is
\begin{equation*}
   \biggl| \begin{array}{cc}
      V_i & V'_i \\[4pt]
      (AV)_i & (AV')_i
   \end{array} \biggr|
   = \biggl| \begin{array}{cc}
      V_i & V'_i \\[4pt]
      0 & 0
   \end{array} \biggr|
   = 0.
\end{equation*}
Thus we may write simply $\W_A$ for $\W_A^i$.
Clearly it is a skew-symmetric bilinear form on $\cK(A)$.

For~(ii) and~(iii), it suffices to check from the definitions that for any $V' \in \cK(A)$,
\begin{equation*}
   \W_A^{i-1} (V^i, V') =
   a^n_i\,
   \biggl| \begin{array}{cc}
   V^i_{i-n} & V'_{i-n} \\[4pt]
   V^i_i & V'_i
   \end{array} \biggr|
   = -V'_i.
\end{equation*}

To prove that $\W_A$ is non-degenerate, recall Lemma~\ref{Gram} and
consider the matrix $\Omega_{\W_A}$ of $\W_A$ in the basis
$\bigl\{ V^{j-n+1}, V^{j+1}, \ldots, V^{j+n} \bigr\}$: for $1 \le r, s \le 2n$,
$(\Omega_{\W_A})_{rs} := \W_A(V^{r+j-n}, V^{s+j-n})$.
The result will follow if we prove $\det (\Omega_{\W_A}) \not= 0$.
Applying~(iii), we find
\begin{equation*}
   \Omega_{\W_A} = 
   \biggl( \begin{array}{cc}
   0 & T \\[4pt]
   -T^t & 0
   \end{array} \biggr),
\end{equation*}
where $T$ is an $n \times n$ upper triangular matrix with diagonal entries
$(a^n_{j+1})^{-1}, (a^n_{j+2})^{-1}, \ldots, (a^n_{j+n})^{-1}$.
\end{proof}

\medbreak \noindent {\bf Rescaling.}
Suppose that $\lambda$ is a non-vanishing sequence over $\K$:
a sequence $(\lambda_i)_{i \in \Z}$ of non-zero $\K$-scalars.
Given an operator $A$, we define its {\em rescaling by\/} $\lambda$
to be the operator $\lambda^{-1} \circ A \circ \lambda^{-1}$.

\begin{lem} \label{RescaleLem}
Let $A$ be a non-degenerate symmetric linear difference operator
over $\K$ of order~$2n$, as in~(\ref{NashGenOp}),
and let $\lambda$ be a non-vanishing sequence over $\K$.
Let $\tilde A$ be the rescaling $\lambda^{-1} A \lambda^{-1}$.

\begin{enumerate}

\item[(i)]
$\tilde A$ is a non-degenerate symmetric
operator over $\K$ of order~$2n$.
Its coefficients $\tilde a^\ell$ are
\begin{equation*}
   \tilde a^\ell_i = \lambda_i^{-1} \lambda_{i-\ell}^{-1} a^\ell_i.
\end{equation*}

\smallbreak \item[(ii)]
If $A$ and $\lambda$ are $N$-periodic, then $\tilde A$ is too.

\smallbreak \item[(iii)]
$\lambda$ is a symplectic map from
$\bigl(\cK(A), \W_A \bigr)$ to $\bigl(\cK(\tilde A), \W_{\tilde A} \bigr)$.

\smallbreak \item[(iv)]
$\lambda \bigl( V^i(A) \bigr) = \lambda_i^{-1} V^i(\tilde A)$.

\end{enumerate}
\end{lem}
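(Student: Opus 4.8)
The plan is to prove part~(i) first by a direct conjugation computation, and then to obtain parts~(ii)--(iv) as essentially immediate consequences of the coefficient formula it supplies.

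For~(i), I would conjugate the positive-shift part of~(\ref{NashGenOp}) by the multiplication operators $\lambda^{-1}$, using the single fact that shifts and multiplication operators commute up to a shift of the sequence: for any multiplication operator $m$ one has $T^\ell m = (T^\ell m)\, T^\ell$, where on the right $T^\ell m$ denotes the multiplication operator with entries $m_{i-\ell}$ (matching the paper's convention). Applying this to the term $a^\ell T^\ell$ gives $\lambda^{-1} a^\ell T^\ell \lambda^{-1} = \bigl[\lambda^{-1} a^\ell (T^\ell \lambda^{-1})\bigr] T^\ell$, whose bracketed multiplication operator has entries $\lambda_i^{-1}\lambda_{i-\ell}^{-1} a^\ell_i$; this is exactly the claimed $\tilde a^\ell_i$. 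To conclude that $\tilde A$ is again symmetric of the form~(\ref{NashGenOp}) with these coefficients, rather than expanding the negative-shift terms I would argue abstractly: multiplication operators are self-adjoint with respect to the inner product $V\cdot W$, and $A^*=A$, so $\tilde A^* = (\lambda^{-1})^* A^* (\lambda^{-1})^* = \lambda^{-1} A \lambda^{-1} = \tilde A$. Non-degeneracy and order~$2n$ are clear, since $\tilde a^n_i = \lambda_i^{-1}\lambda_{i-n}^{-1} a^n_i \ne 0$. Part~(ii) is then immediate, as $\tilde a^\ell_{i+N} = \lambda_{i+N}^{-1}\lambda_{i+N-\ell}^{-1} a^\ell_{i+N}$ equals $\tilde a^\ell_i$ by the $N$-periodicity of $\lambda$ and of $a^\ell$.

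For~(iii) and~(iv) I would first record that multiplication by $\lambda$ sends $\cK(A)$ into $\cK(\tilde A)$: if $AV=0$ then $\tilde A(\lambda V) = \lambda^{-1} A \lambda^{-1}(\lambda V) = \lambda^{-1} A V = 0$, and multiplication by $\lambda^{-1}$ is the inverse map. For~(iii) I would substitute $\lambda V$ and $\lambda V'$ into the defining sum~(\ref{SymFEq}) for $\W_{\tilde A}$. Pulling the scalars out of the two rows of each $2 \times 2$ determinant produces a factor $\lambda_{m-\ell}\lambda_m$, which multiplies $\tilde a^\ell_m = \lambda_m^{-1}\lambda_{m-\ell}^{-1} a^\ell_m$ to give precisely $a^\ell_m$; hence a summand-by-summand comparison yields $\W_{\tilde A}(\lambda V, \lambda V') = \W_A(V, V')$. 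The only point to watch is that the scale factors attached to the two rows of the determinant match exactly those appearing in $\tilde a^\ell_m$, so the cancellation is exact.

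Finally, for~(iv) I would use that both $\lambda\bigl(V^i(A)\bigr)$ and $\lambda_i^{-1} V^i(\tilde A)$ lie in $\cK(\tilde A)$, so by Lemma~\ref{IC lemma} it suffices to compare their values on the consecutive indices $i-n, \ldots, i+n$ appearing in~(\ref{V^i(A) eqn}). The middle entries of $V^i(A)$ vanish, so those of $\lambda V^i(A)$ vanish as well, matching the zeros of $\lambda_i^{-1} V^i(\tilde A)$; and at the two ends, using $\tilde a^n_i = \lambda_i^{-1}\lambda_{i-n}^{-1} a^n_i$ and $\tilde a^n_{i+n} = \lambda_{i+n}^{-1}\lambda_i^{-1} a^n_{i+n}$, one checks directly that $\lambda_{i-n}(-1/a^n_i) = \lambda_i^{-1}(-1/\tilde a^n_i)$ and $\lambda_{i+n}(1/a^n_{i+n}) = \lambda_i^{-1}(1/\tilde a^n_{i+n})$, so the two kernel elements agree and are therefore equal. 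I expect the main obstacle to be purely the index bookkeeping in~(i) and~(iv) --- keeping straight which argument of $\lambda$ is shifted by $\ell$ versus by $n$ --- after which every assertion reduces to the single clean identity $\tilde a^\ell_m\, \lambda_{m-\ell}\lambda_m = a^\ell_m$.
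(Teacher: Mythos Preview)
Your proof is correct and follows essentially the same route as the paper's: the paper leaves~(i), (ii), and the fact that $\lambda$ carries $\cK(A)$ to $\cK(\tilde A)$ to the reader, proves~(iii) by exactly the summand-by-summand cancellation $\tilde a^\ell_m \lambda_m \lambda_{m-\ell} = a^\ell_m$ you describe, and proves~(iv) by matching the initial conditions~(\ref{V^i(A) eqn}) and invoking Lemma~\ref{IC lemma}. Your abstract self-adjointness argument for~(i) is a clean way to handle the negative-shift terms without a second computation, but otherwise the arguments coincide.
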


\begin{proof}
We leave~(i), (ii), and $\lambda \bigl(\cK(A)\bigr) = \cK(\tilde A)$ to the reader.
To prove $\lambda$ symplectic, verify
\begin{equation*}
   \W_{\tilde A}^i (\lambda V, \lambda V')
   := \sum_{\ell=1}^n \sum_{m=i+1}^{i+\ell}
   \, \tilde a^\ell_m \lambda_m \lambda_{m-\ell} \,
   \biggl| \begin{array}{cc}
      V_{m-\ell} & V'_{m-\ell} \\[4pt]
      V_m & V'_m
   \end{array} \biggr|.
\end{equation*}
Because $\tilde a^\ell_m \lambda_m \lambda_{m-\ell} = a^\ell_m$,
this is simply $\W_A^i (V, V')$.

For~(iv), use Corollary~\ref{V^i(A)} to check that
$\lambda \bigl( V^i(A) \bigr)_j = \lambda_i^{-1} V^i(\tilde A)_j$
for $i-n \le j \le i+n$.  By~(iii), both $\lambda \bigl( V^i(A) \bigr)$ and
$\lambda_i^{-1} V^i(\tilde A)$ are in $\cK(\tilde A)$,
so by Lemma~\ref{IC lemma} they are equal.
\end{proof}

\subsection{Periodic operators, monodromy, and Lagrangian configurations}

Difference equations corresponding to $N$-periodic operators
do not necessarily have $N$-periodic solutions.
However, we do have the following lemma.
Its proof is immediate from the obvious fact
that an operator is $N$-periodic
if and only if it commutes with $T^N$.

\begin{lem}
Suppose that $A$ is an $N$-periodic linear difference operator.
Then $T^N$ preserves the kernel $\cK(A)$.
It is called the {\em monodromy operator\/} $M_A$ of $A$:
\begin{equation*}
   M_A := T^N |_{\cK(A)}: \cK(A) \to \cK(A).
\end{equation*}
\end{lem}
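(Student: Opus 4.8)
The plan is to reduce the lemma to the single algebraic fact quoted just before the statement, namely that an operator is $N$-periodic if and only if it commutes with $T^N$; once this is in hand, preservation of the kernel is purely formal. First I would record how conjugation by $T^N$ acts on a multiplication operator. Using the definitions $(TV)_i := V_{i-1}$ and $(aV)_i := a_i V_i$, so that $(T^N V)_i = V_{i-N}$ and $(T^{-N} V)_i = V_{i+N}$, a direct computation gives $(T^N a\, T^{-N} V)_i = a_{i-N} V_i$. Thus $T^N a\, T^{-N}$ is again multiplication, by the shifted sequence $(a_{i-N})_i$, and so $T^N$ commutes with the multiplication operator $a$ precisely when $a_{i-N} = a_i$ for all $i$, i.e. when $a$ is $N$-periodic.

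Next, since all integer powers of $T$ commute among themselves, $T^N$ commutes with $A = \sum_\ell a^\ell\, T^\ell$ if and only if it commutes with each coefficient $a^\ell$. Combining this with the previous paragraph, $T^N A = A T^N$ if and only if every $a^\ell$ is $N$-periodic, which is exactly the definition of $A$ being $N$-periodic. This establishes the ``obvious fact'' invoked before the statement.

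Finally I would conclude as follows. Let $V \in \cK(A)$, so that $AV = 0$. Then $A(T^N V) = T^N(A V) = T^N(0) = 0$, whence $T^N V \in \cK(A)$; thus $T^N$ maps $\cK(A)$ into itself and $M_A := T^N|_{\cK(A)}$ is well defined. Since $T^{-N}$ is a two-sided inverse of $T^N$ and preserves $\cK(A)$ by the identical argument, $M_A$ is in fact a linear automorphism of $\cK(A)$, a finite-dimensional space by Lemma~\ref{IC lemma}. I do not expect any genuine obstacle here: the only point requiring care is the index bookkeeping in the conjugation computation, where one must track the two opposing shifts $T^N$ and $T^{-N}$ correctly; everything else is formal, which is why the paper rightly calls the proof immediate.
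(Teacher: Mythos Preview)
Your proof is correct and follows exactly the approach the paper indicates: establish that $N$-periodicity is equivalent to commutation with $T^N$, then deduce preservation of $\cK(A)$ formally from $A T^N = T^N A$. The conjugation computation $T^N a\, T^{-N} = (a_{i-N})_i$ is carried out correctly, and your additional remark that $M_A$ is an automorphism (via $T^{-N}$) is a harmless bonus.
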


In the case of non-degenerate symmetric operators,
the monodromy is symplectic:

\begin{lem} \label{monod symp}
Suppose that $A$ is a non-degenerate $N$-periodic symmetric
linear difference operator of order~$2n$.
Then the monodromy operator $M_A$ preserves
the symplectic form $\W_A$ on $\cK(A)$.
\end{lem}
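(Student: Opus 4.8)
The plan is to reduce the claim to the base-point independence of the form $\W_A^i$ already established in Lemma~\ref{SymLem}(i). Since $A$ is $N$-periodic, the preceding lemma guarantees that $M_A = T^N$ maps $\cK(A)$ to itself, so $\W_A(M_A V, M_A V')$ is well defined for $V, V' \in \cK(A)$. The key observation is that applying $T^N$ to a solution merely shifts its indices, $(T^N V)_j = V_{j-N}$, so evaluating the multidimensional Wronski form on the shifted pair ought to coincide with evaluating the original form at a base point displaced by $N$ — and the latter is base-point independent.

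First I would fix an arbitrary $i$ and expand $\W_A(M_A V, M_A V') = \W_A^i(T^N V, T^N V')$ using the defining formula~(\ref{SymFEq}). Substituting $(T^N V)_{m-\ell} = V_{m-\ell-N}$ and $(T^N V)_m = V_{m-N}$ (and likewise for $V'$) turns each $2\times2$ determinant appearing in the sum into the corresponding determinant of $V$ and $V'$ with all of its indices lowered by $N$, while leaving the coefficients $a^\ell_m$ untouched.

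Next I would re-index the double sum by setting $m' := m - N$. The inner summation over $m$ from $i+1$ to $i+\ell$ then becomes a summation over $m'$ from $i+1-N$ to $i+\ell-N$, and the $N$-periodicity of the coefficients yields $a^\ell_m = a^\ell_{m'+N} = a^\ell_{m'}$. Comparing the result with~(\ref{SymFEq}), the expression obtained is exactly $\W_A^{i-N}(V, V')$. Finally, invoking Lemma~\ref{SymLem}(i) to write $\W_A^{i-N} = \W_A^i = \W_A$, I would conclude $\W_A(M_A V, M_A V') = \W_A(V, V')$, which is the desired symplecticity of $M_A$.

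I do not expect a genuine obstacle: the whole argument is index bookkeeping, and the one structural input — that a shift of a solution by $T^N$ is absorbed into a shift of the base point of the form — is supplied precisely by the $N$-periodicity of the coefficients together with the base-point independence already in hand. The single point requiring care is matching the summation ranges after re-indexing, to confirm that the shifted double sum reproduces $\W_A^{i-N}$ \emph{exactly}, with no leftover boundary terms.
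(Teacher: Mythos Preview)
Your proposal is correct and follows essentially the same approach as the paper: both compute $\W_A^i(T^N V, T^N V')$, use the $N$-periodicity of the coefficients to rewrite it as $\W_A^{i-N}(V, V')$, and then invoke the base-point independence from Lemma~\ref{SymLem}(i). Your version simply spells out the re-indexing that the paper leaves as a one-line ``check''.
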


\begin{proof}
We must prove that $\W_A(T^N V, T^N V') = \W_A(V, V')$
for all elements $V$ and $V'$ of $\cK(A)$.
Recall that $\W_A$ may be expressed as $\W_A^i$ for any~$i$.
Use the fact that $(T^N a^\ell) = a^\ell$ for all~$\ell$
to check that $\W_A^i(T^N V, T^N V') = \W_A^{i-N}(V, V')$.
\end{proof}

Our main result in Section~\ref{SLDEs} is Theorem~\ref{MonThm v2},
the most general result of the paper.
It states that a certain set of difference operators may be projected
to symplectic equivalence classes of Lagrangian configurations.
In order to define this projection we make two preliminary definitions.

\begin{defn}
For $N \ge 2n$, let $\ocL_{n, N}(\K)$ be the
$\Sp(2n, \K)$-moduli space of symplectic equivalence classes
of {\it all} $(n, N)$-\Lc s over $\K$, both generic and non-generic.
\end{defn}

\begin{defn}
For $N \ge 2n$, let $\DE_{n, N}(\K)$ be
the set of non-degenerate $N$-periodic
symmetric linear difference operators
over $\K$ of order~$2n$ with monodromy $-\Id$.
\end{defn}

\begin{rems}
\begin{itemize}

\item
Regarded as a subset of $(\K\bP^{2n-1})^N$,
$\cL_{n, N}(\K)$ is dense in $\ocL_{n,N}(\K)$
in both the standard and Zariski topologies.

\smallbreak \item
For both geometric and analytic reasons,
imposing the condition that the monodromy be $\Id$
in the definition of $\DE_{n, N}(\K)$ would be less natural;
cf.~\cite{SVRS, Kri} for the $\SL(2n)$-analog.

\smallbreak \item
Suppose that $A$ is a non-degenerate $N$-periodic
symmetric linear difference operator of order~$2n$.
Fix initial conditions $V_{i_0+1}, \ldots, V_{i_0+2n}$,
and let $V$ be the corresponding solution of $AV = 0$.
It is easy to see that each entry $V_i$ of $V$
depends polynomially on the quantities
$(a^n_k)^{\pm 1}, a^{n-1}_k, \ldots, a^0_k$ for $i_0 < k \le i_0+N$.
It follows that the same is true of $M_A$,
and so $\DE_{n, N}$ is an algebraic variety.

We will see that $\DE_{n, N}$ projects to
$\ocL_{n, N}$, with fibers given by rescaling.
Recall from Proposition~\ref{freely} that
$\ocL_{n, N}$ is $n(N-2n-1)$-dimensional.
The set of periodic rescalings has $N$ parameters,
so the dimension of $\DE_{n, N}$ is $(n+1)N - n(2n+1)$.
Thus the number of independent constraints imposed on a periodic
symmetric linear difference operator by specifying its monodromy
to be $-\Id$
is the dimension of the symplectic group preserving $\W_A$,
as one would predict from Lemma~\ref{monod symp}.

\end{itemize}
\end{rems}

\begin{prop} \label{P(A)}
Suppose that $A$ is in $\DE_{n, N}(\K)$.
Fix arbitrarily an identification of
the symplectic space $\bigl( \cK(A), \W_A \bigr)$
with the standard symplectic space $\bigl( \K^{2n}, \omega \bigr)$,
and let $v_i \in \K^{2n}$ be the image under this identification
of the element $V^i(A)$ of\/ $\cK(A)$ defined in
Corollary~\ref{V^i(A)}.

\begin{enumerate}

\item[(i)]
The $v_i$ are $N$-antiperiodic.

\smallbreak \item[(ii)]
$\bigl( \K v_1, \ldots, \K v_N \bigr)$ is an $(n, N)$-\Lc.

\smallbreak \item[(iii)]
There is a map $P: \DE_{n, N}(\K) \to \ocL_{n, N}(\K)$,
defined by
\begin{equation*}
P(A) :=  \mbox{\it the symplectic equivalence class of\/}\
\bigl( \K v_1, \ldots, \K v_N \bigr).
\end{equation*}

\smallbreak \item[(iv)]
$P(A)$ and $P(-A)$ are opposite configurations.

\end{enumerate}
\end{prop}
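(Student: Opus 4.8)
The plan is to exploit that negating the coefficients of $A$ leaves its kernel unchanged as a vector space but reverses its symplectic form, and to match this reversal against the matrix $Q$ of Section~\ref{OCSI}, which satisfies $\omega(Qx, Qy) = -\omega(x, y)$. First I would verify that $-A$ again lies in $\DE_{n, N}(\K)$: negating the coefficients $a^\ell$ preserves non-degeneracy, symmetry, $N$-periodicity, and order~$2n$; and since $AV = 0$ if and only if $(-A)V = 0$, the kernels coincide, $\cK(-A) = \cK(A)$, whence $M_{-A} = T^N|_{\cK(-A)} = M_A = -\Id$. Because the Wronskian formula~(\ref{SymFEq}) is linear in the coefficients, this also gives $\W_{-A} = -\W_A$ on the common kernel.

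Next I would compare the distinguished solutions. The initial data~(\ref{V^i(A) eqn}) defining $V^i(-A)$ are obtained from those defining $V^i(A)$ by replacing $a^n$ with $-a^n$, i.e.\ by an overall sign change. Since $V^i(-A)$ and $-V^i(A)$ lie in the same space $\cK(A) = \cK(-A)$ and satisfy the same initial conditions, Lemma~\ref{IC lemma} forces $V^i(-A) = -V^i(A)$ for all~$i$.

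The decisive step is the choice of symplectic identification. Let $\phi \colon (\cK(A), \W_A) \to (\K^{2n}, \omega)$ be the identification producing the representatives $v_i = \phi(V^i(A))$ of $P(A)$. Since $\omega(Qx, Qy) = -\omega(x, y)$, the composite $Q \circ \phi$ is a symplectic identification of $(\cK(-A), \W_{-A}) = (\cK(A), -\W_A)$ with $(\K^{2n}, \omega)$. As $P$ is well defined independently of the chosen identification by part~(iii), I may evaluate $P(-A)$ using $Q \circ \phi$, obtaining representatives $Q\phi(V^i(-A)) = -Q v_i$, with lines $\K(-Q v_i) = Q(\K v_i)$. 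Hence $P(-A)$ is represented by $(Q\K v_1, \ldots, Q\K v_N)$, the opposite of $P(A)$ in the sense of Lemma~\ref{opposite}(i). The one point requiring care is the legitimacy of replacing $\phi$ by $Q \circ \phi$, which rests precisely on the canonicity of $P$ established in part~(iii); granting this, the two sign changes, namely $\W_{-A} = -\W_A$ absorbed by $Q$ and $V^i(-A) = -V^i(A)$, are each invisible at the level of lines and together reconstruct exactly the action of $Q$ on the configuration.
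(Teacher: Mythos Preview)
Your proposal addresses only part~(iv), taking parts~(i)--(iii) as given; if the intent was to prove the full proposition, then~(i)--(iii) are missing, though the paper's arguments for them are one line each. Your argument for~(iv) is correct and is exactly the paper's approach, only written out in greater detail: the paper merely records the three facts $\cK(-A) = \cK(A)$, $V^i(-A) = -V^i(A)$, and $\W_{-A} = -\W_A$ and leaves the reader to assemble them, whereas you explicitly verify each and then carry out the assembly via $Q \circ \phi$.
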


\begin{proof}
The fact that the monodromy $M_A$ is~$-\Id$ translates to
the statement that $V^{i+N}(A) = -V^i(A)$, giving~(i).
For~(ii), apply Lemmas~\ref{ii from i} and~\ref{SymLem}(iii)
and use the fact that $\W_A \bigl( V^i(A), V^j(A) \bigr)
= \omega(v_i, v_j)$ by construction.
For~(iii), note that the symplectic equivalence class
of $(\K v_1, \ldots, \K v_N)$ is independent of the choice
of symplectic identification of $\cK(A)$ with $\K^{2n}$.

For~(iv), use the facts that $\cK(-A) = \cK(A)$,
$V^i(-A) = -V^i(A)$, and $\W_{-A} = -\W_A$.
\end{proof}

\begin{thm} \label{MonThm v2}

\begin{enumerate}

\item[(i)]
$P: \DE_{n, N}(\K) \to \ocL_{n, N}(\K)$ is surjective.

\smallbreak \item[(ii)]
$P(A) = P(\tilde A)$ if and only if $\tilde A$ is a
rescaling $\lambda^{-1} A \lambda^{-1}$ of $A$
by an $N$-periodic $\lambda$.

\end{enumerate}
\end{thm}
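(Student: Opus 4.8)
The plan is to dispatch~(ii) using the rescaling lemmas already in hand, and then to prove surjectivity in~(i) by reversing the construction of $P$: from a configuration I will reconstruct an operator in $\DE_{n,N}(\K)$. The conceptual bridge throughout is the identity $\omega(v_i, v_j) = \W_A\bigl(V^i(A), V^j(A)\bigr) = V^i_j(A)$ from Lemma~\ref{SymLem}(ii), which says that the symplectic products of the configuration $P(A)$ are exactly the entries of the distinguished solutions $V^i(A)$ of Corollary~\ref{V^i(A)}.

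For~(ii), the ``if'' direction is immediate from Lemma~\ref{RescaleLem}: if $\tilde A = \lambda^{-1} A \lambda^{-1}$ with $\lambda$ an $N$-periodic non-vanishing sequence, then part~(iii) gives that $\lambda$ is a symplectic isomorphism $\cK(A) \to \cK(\tilde A)$ and part~(iv) gives $\lambda\bigl(V^i(A)\bigr) = \lambda_i^{-1} V^i(\tilde A)$, so the two configurations agree up to a symplectic map together with a rescaling of representatives; hence $P(A) = P(\tilde A)$. For ``only if'', suppose $P(A) = P(\tilde A)$. Absorbing the symplectic equivalence into the free choice of identification $\cK(\tilde A) \cong \K^{2n}$, I may assume the two configurations have the same underlying lines, so $v_i(\tilde A) = \lambda_i v_i(A)$ for scalars $\lambda_i$. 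Because both families are $N$-antiperiodic, comparing $v_{i+N} = -v_i$ forces $\lambda_{i+N} = \lambda_i$, so $\lambda$ is $N$-periodic. Finally, translating $v_i(\tilde A) = \lambda_i v_i(A)$ back through Corollary~\ref{V^i(A)} and reading off coefficients recovers $\tilde a^\ell_i = \lambda_i^{-1}\lambda_{i-\ell}^{-1} a^\ell_i$, i.e.\ $\tilde A = \lambda^{-1} A \lambda^{-1}$.

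For surjectivity in~(i), start from any class in $\ocL_{n,N}(\K)$, choose representatives $x_i$, and extend $N$-antiperiodically. For each $j$ the $2n+1$ vectors $x_{j-n}, \ldots, x_{j+n}$ lie in $\K^{2n}$ and hence admit a linear dependence $R_j:\ \sum_{k=-n}^n \beta^{(j)}_k x_{j+k} = 0$; by Property~(ii) any $2n$ consecutive of them form a basis, so $R_j$ is unique up to scale and has $\beta^{(j)}_{\pm n} \neq 0$. Normalizing by $\beta^{(j)}_{-n} := 1/\omega_{j-n, j}$ (legitimate since $\omega_{j-n,j} \neq 0$ by Lemma~\ref{ii from i}), pairing $R_j$ against $x_j$ and using the Lagrangian vanishing $\omega_{pq}=0$ for $|p-q|_N<n$ shows that the opposite extreme coefficient is automatically $\beta^{(j)}_n = 1/\omega_{j, j+n}$. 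I then let $A$ be the symmetric operator with $a^n_j := 1/\omega_{j-n,j}$ and lower coefficients read off from these relations. Granting that $A$ is self-adjoint (see below), the remaining checks are routine: non-degeneracy holds because $a^n_j \neq 0$; the antiperiodicity $\omega_{i+N,j} = -\omega_{ij}$ makes the normalized relations $N$-periodic, so $A \in \DE_{n,N}(\K)$ with monodromy $-\Id$, as $T^N V^{(i)} = -V^{(i)}$ for the sequences $V^{(i)} := (\omega_{ij})_j$; and with this normalization the $V^{(i)}$ have precisely the spike pattern of Corollary~\ref{V^i(A)}, so $V^{(i)} = V^i(A)$ and $\omega(v_i, v_j) = V^i_j(A) = \omega_{ij}$. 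By Lemma~\ref{Gram}(iii) the configuration $(\K v_i)$ is then symplectically equivalent to $(\K x_i)$, giving $P(A)$ equal to the original class.

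The main obstacle is proving that the operator so constructed is genuinely self-adjoint, i.e.\ that in the chosen normalization $\beta^{(j)}_\ell = \beta^{(j+\ell)}_{-\ell}$ for every $\ell$ and $j$, which is exactly the condition $A = A^*$ on the banded coefficients. The extreme case $\ell = n$ is the matched ratio established above. I expect to obtain the intermediate cases by induction, pairing $R_j$ against $x_{j+n+\ell}$ and $R_{j+\ell}$ against $x_{j-n}$: the Lagrangian vanishing truncates each pairing to a short tail, and the skew-symmetry of $\omega$ together with the already-established higher-$\ell$ identities lets one solve for $\beta^{(j)}_\ell$ and $\beta^{(j+\ell)}_{-\ell}$ and verify they coincide. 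Equivalently, one can argue more structurally that skew-symmetry of $\omega$ forces the minimal operator annihilating $\Span\{V^{(i)}\}$ to equal a scalar multiple of its own adjoint, the normalization $a^n_j = 1/\omega_{j-n,j}$ pinning that scalar to~$1$; either route reduces self-adjointness to the combinatorics of the banded pairing.
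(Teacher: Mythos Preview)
Your overall architecture matches the paper's proof: both use Lemma~\ref{RescaleLem} for the ``if'' direction of~(ii), both extract the rescaling factor $\lambda$ from the proportionality of the distinguished solutions for ``only if'', and both prove surjectivity by building $A$ out of the unique-up-to-scale linear dependence among $x_{i-n},\ldots,x_{i+n}$ with the normalization $a^n_i=1/\omega_{i-n,i}$.

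Where you diverge is exactly at the point you flag as ``the main obstacle''. Your proposed pairing of $R_j$ against $x_{j+n+\ell}$ does not truncate to a short tail: the Lagrangian vanishing kills $\omega(x_{j+k},x_{j+n+\ell})$ only for $\ell<k\le n$, leaving the $n+\ell+1$ terms $k=-n,\ldots,\ell$, so no clean induction emerges from that choice. The paper instead pairs the relation~(\ref{x SLDE v2}) against $x_{i+p}$ and $x_{i-p}$ for $0\le p\le n$ (vectors \emph{inside} the window), obtaining two short recursions which it solves to closed forms as sums over strictly increasing tuples $0<p_1<\cdots<p_m<p$; the symmetry $a^{p-n}_i = a^{n-p}_{i+n-p}$ then follows from the explicit involution $(p_1,\ldots,p_m)\mapsto(p-p_m,\ldots,p-p_1)$ of that index set. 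This is the substance of Lemma~\ref{IndLem}.

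This same closed-form description is what the paper uses to close the ``only if'' of~(ii), which you handle by the phrase ``reading off coefficients''. That phrase hides a real step: one needs that $A$ is determined by the symplectic products $\nu_{ij}=\W_A(V^i,V^j)$. The paper isolates this as a separate Lemma~\ref{A is hat A}, proved by showing the coefficients satisfy the very formulas~(\ref{a^{n-p}_i}) with $\nu_{ij}$ in place of $\omega_{ij}$. Your sketch is therefore correct in spirit, but the proof becomes complete only once one has those explicit formulas (or an equivalent uniqueness argument) in hand; both the self-adjointness in~(i) and the injectivity-up-to-rescaling in~(ii) rest on them.
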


\begin{proof}
We proceed by a series of lemmas.
For~(i), fix an $(n, N)$-\Lc\ $(\K x_1, \ldots, \K x_N)$.
As usual, extend the representatives $N$-antiperiodically
to $(x_i)_{i \in \Z}$ and write $\omega_{ij}$ for $\omega(x_i, x_j)$.
In order to construct an operator $A$ in $\DE_{n, N}(\K)$
such that $P(A)$ is the class of $(\K x_1, \ldots, \K x_N)$,
for all~$i$ define
\begin{equation} \label{a^n_i eqn}
   a^n_i := 1/\omega_{i-n, i}.
\end{equation}
Keeping in mind that $\{ x_{i-n+1}, \ldots, x_{i+n} \}$
is a basis of $\K^{2n}$, define $a^{n-1}_i, \ldots, a^{-n}_i$
by the equation
\begin{equation} \label{x SLDE v2}
   a^n_i x_{i-n} + a_i^{n-1} x_{i-n+1} + 
   \cdots + a_i^{-n+1} x_{i+n-1} + a_i^{-n} x_{i+n} = 0.
\end{equation}
Define $A$ by $(AV)_i := a^n_i V_{n-i} + \cdots + a^{-n}_i V_{n+i}$.
The next two lemmas concern this difference operator.

\begin{lem} \label{IndLem}
\begin{enumerate}

\item[(i)]
$A$ is non-degenerate, $N$-periodic, and symmetric.

\smallbreak \item[(ii)]
For $1 \le p \le n$, the coefficients $a^{n-p}_i$ are given by
\begin{equation} \label{a^{n-p}_i}
   a^{n-p}_i \,=\, \sum_{m=0}^{p-1}\,
   \sum_{0 < p_1 < \cdots < p_m < p} -(-1)^m\,
   \frac{\omega_{i-n, i+p_1}\, \omega_{i-n+p_1, i+p_2}
   \cdots \omega_{i-n+p_{m-1}, i+p_m}\, \omega_{i-n+p_m, i+p}}
   {\omega_{i-n, i}\, \omega_{i-n+p_1, i+p_1} \cdots
   \omega_{i-n+p_m, i+p_m}\, \omega_{i-n+p, i+p}},
\end{equation}
where the summand at $m=0$ is understood to be
$-\omega_{i-n, i+p} / \omega_{i-n, i}\, \omega_{i-n+p, i+p}$.

\end{enumerate}
\end{lem}

\begin{exs}
Observe that~(\ref{a^{n-p}_i}) has $2^{p-1}$ summands.
The first three cases are
\begin{align*}
   a^{n-1}_i \,=\, \frac{-1}{\omega_{i-n, i}}
   \Bigl( \frac{\omega_{i-n, i+1}}{\omega_{i-n+1, i+1}} & \Bigr), \\[4pt]
   a^{n-2}_i \,=\, \frac{-1}{\omega_{i-n, i}}
   \Bigl( \frac{\omega_{i-n, i+2}}{\omega_{i-n+2, i+2}} &
   - \frac{\omega_{i-n, i+1}\, \omega_{i-n+1, i+2}}
   {\omega_{i-n+1, i+1}\, \omega_{i-n+2, i+2}} \Bigr), \\[4pt]
   a^{n-3}_i \,=\, \frac{-1}{\omega_{i-n, i}}
   \Bigl( \frac{\omega_{i-n, i+3}}{\omega_{i-n+3, i+3}} &
   - \frac{\omega_{i-n, i+1}\, \omega_{i-n+1, i+3}}
   {\omega_{i-n+1, i+1}\, \omega_{i-n+3, i+3}}
   - \frac{\omega_{i-n, i+2}\, \omega_{i-n+2, i+3}}
    {\omega_{i-n+2, i+2}\, \omega_{i-n+3, i+3}} \\[4pt]
   & + \frac{\omega_{i-n, i+1}\, \omega_{i-n+1, i+2}\, \omega_{i-n+2, i+3}}
    {\omega_{i-n+1, i+1}\, \omega_{i-n+2, i+2}\, \omega_{i-n+3, i+3}} \Bigr).
\end{align*}
\end{exs}

\begin{proof}
Apply $\omega(x_{i-r}, \cdot)$ to~(\ref{x SLDE v2}) to obtain
\begin{equation} \label{omega solves A}
   0 = a^n_i \omega_{i-r, i-n} + a_i^{n-1} \omega_{i-r, i-n+1}
   + a_i^{n-2} \omega_{i-r, i-n+2} + \ldots + a_i^{-n} \omega_{i-r, i+n}.
\end{equation}
Consider the case $r = 0$.
By the Lagrangian condition, here only the leftmost and
rightmost terms on the right hand side are non-zero.
We obtain
\begin{equation} \label{a^n_{i+n} eqn}
   a^{-n}_i = 1/\omega_{i, i+n} = a^n_{i+n}.
\end{equation}
Thus $A$ is non-degenerate and satisfies the symmetry condition
$a^{-\ell}_i = a^\ell_{i+\ell}$ for $\ell = n$.

Now consider the cases $r = \pm p$ with $1 \le p \le n$.
By the Lagrangian condition, for $r = -p$ only the leftmost
$p+1$ terms on the right hand side are non-zero,
while for $r = p$ only the rightmost $p + 1$ terms are non-zero.
We obtain
\begin{align}
\label{a^{n-p}_i ind}
   & a^{n-p}_i = \frac{-1}{\omega_{i-n+p, i+p}}
   \Bigl( \frac{\omega_{i-n, i+p}}{\omega_{i-n, i}}
   + a^{n-1}_i\, \omega_{i-n+1, i+p} + \cdots
   + a^{n-p+1}_i\, \omega_{i-n+p-1, i+p} \Bigr), \\[4pt]
\label{a^{p-n}_i ind}
   & a^{p-n}_i = \frac{-1}{\omega_{i-p, i+n-p}}
   \Bigl( \frac{\omega_{i-p, i+n}}{\omega_{i, i+n}}
   + a^{1-n}_i\, \omega_{i-p, i+n-1} + \cdots
   + a^{p-1-n}_i\, \omega_{i-p, i+n-p+1} \Bigr).
\end{align}

A straightforward induction argument
from~(\ref{a^{n-p}_i ind}) gives~(\ref{a^{n-p}_i}):
the first term of~(\ref{a^{n-p}_i ind})
is the $m=0$ term of~(\ref{a^{n-p}_i}), and the term
$- a^{n-q}_i \omega_{i-n+q, i+p} / \omega_{i-n+p, i+p}$
of~(\ref{a^{n-p}_i ind}) gives those terms of~(\ref{a^{n-p}_i})
with $p_m = q$.
A parallel argument from~(\ref{a^{p-n}_i ind})
yields a closed formula for $a^{p-n}_i$:
\begin{equation} \label{a^{p-n}_i}
   a^{p-n}_i \,=\, \sum_{m=0}^{p-1}\,
   \sum_{0 < p_1 < \cdots < p_m < p} -(-1)^m\,
   \frac{\omega_{i-p, i+n-p_m}\, \omega_{i-p_m, i+n-p_{m-1}}
   \cdots \omega_{i-p_2, i+n-p_1}\, \omega_{i-p_1, i+n}}
   {\omega_{i-p, i+n-p}\, \omega_{i-p_m, i+n-p_m}
   \cdots \omega_{i-p_1, i+n-p_1}\, \omega_{i, i+n}},
\end{equation}
where the summand at $m=0$ is understood to be
$-\omega_{i-p, i+n} / \omega_{i-p, i+n-p}\, \omega_{i, i+n}$.

To finish proving that $A$ is symmetric, we must prove
$a^{-\ell}_i = a^\ell_{i+\ell}$ for $0 \le \ell < n$.  Note that
$$
   (p_1,\, p_2,\, \ldots,\, p_m) \mapsto
   (p-p_m,\, p-p_{m-1},\, \ldots,\, p-p_1)
$$
is an involution of the index set of the inner summation in~(\ref{a^{n-p}_i}).
Use this to verify that replacing $i$ by $i+n-p$ in~(\ref{a^{n-p}_i})
gives~(\ref{a^{p-n}_i}).  This completes the proof of the lemma:
the fact that $A$ is $N$-periodic is now immediate from
$\omega_{i+N, j+N} = \omega_{ij}$.
\end{proof}

\begin{lem} \label{V is omega}
\begin{enumerate}

\item[(i)]
$A$ lies in $\DE_{n, N}(\K)$.

\smallbreak \item[(ii)]
The solutions $V^i(A)$ defined in Corollary~\ref{V^i(A)}
are given by $V^i_j(A) = \omega_{ij}$.

\end{enumerate}
\end{lem}

\begin{proof}
We begin with~(ii).  Abbreviate $V^i(A)$ by $V^i$.
By~(\ref{V^i(A) eqn}), (\ref{a^n_i eqn}), and~(\ref{a^n_{i+n} eqn}),
\begin{align*}
   \bigl( V^i_{i-n},\, V^i_{i-n+1},\, \ldots,\,
   V^i_{i+n-1},\, V^i_{i+n} \bigr)
   & \,:=\, \Bigl( \omega_{i, i-n},\, 0,\, \ldots,\,
   0,\, \omega_{i, i+n} \Bigr) \\[4pt]
   & \,:=\, \Bigl( \omega_{i, i-n},\, \omega_{i, i-n+1} \ldots,\,
   \omega_{i, i+n-1},\, \omega_{i, i+n} \Bigr).
\end{align*}
Consider~(\ref{omega solves A}): since $i-r$ is arbitrary,
we see that $(\omega_{ij})_j$ lies in $\cK(A)$.
By the above identity, it has the same initial conditions
as $V^i$, and so~(ii) follows from Lemma~\ref{IC lemma}.

In light of Lemma~\ref{IndLem}(i), to prove~(i) it suffices
to prove that~(\ref{x SLDE v2}) has monodromy $-\Id$.
Because the $V^i$ span $\cK(A)$, this reduces to
$V^i_{j+N} = -V^i_j$ for all $i,\, j$.
By~(ii), this follows from $x_{j+N} = -x_j$.
\end{proof}

At this point we have proven Theorem~\ref{MonThm v2}(i):
by Lemmas~\ref{SymLem} and~\ref{V is omega},
the element $A$ of $\DE_{n, N}(\K)$ constructed in
Lemma~\ref{IndLem} has solutions $V^i(A)$
satisfying $\W_A(V^i, V^j) = V^i_j = \omega(x_i, x_j)$.
Therefore by Lemma~\ref{Gram}(iii) there is an element 
of $\Sp(2n, \K)$ carrying the $x_i$ to the $v_i$ of
Proposition~\ref{P(A)}, and so $P(A)$ is the class
of the Lagrangian configuration originally given.

We now turn to Theorem~\ref{MonThm v2}(ii).
The fact that $P(A) = P(\tilde A)$ if 
$A = \lambda \tilde A \lambda$ is immediate
from Lemma~\ref{RescaleLem}: $\lambda$
is a symplectic map carrying $V^i(A)$ to a multiple
of $V^i(\tilde A)$.
Conversely, suppose that $P(A) = P(\tilde A)$.
Reviewing Proposition~\ref{P(A)}, we find that
this means there is a symplectic map
$\Lambda$ from $\bigl( \cK(A), \W_A \bigr)$
to $\bigl( \cK(\tilde A), \W_{\tilde A} \bigr)$
carrying $V^i(A)$ to a non-zero multiple of
$V^i(\tilde A)$, for all~$i$.
Define $\lambda$ by setting
$\lambda_i^{-1}$ to be this multiple.
Because the sequences $V^i(A)$ and
$V^i(\tilde A)$ are both $N$-antiperiodic,
$\lambda$ is $N$-periodic.

Let $\hat A := \lambda \tilde A \lambda$.
By Lemma~\ref{RescaleLem}, $\lambda$
is a symplectic map from $\bigl( \cK(\hat A), \W_{\hat A} \bigr)$
to $\bigl( \cK(\tilde A), \W_{\tilde A} \bigr)$
carrying $V^i(\hat A)$ to $\lambda_i^{-1} V^i(\tilde A)$.
Therefore $\lambda^{-1} \circ \Lambda$
is a symplectic map from $\bigl( \cK(A), \W_A \bigr)$
to $\bigl( \cK(\hat A), \W_{\hat A} \bigr)$
carrying $V^i(A)$ to $V^i(\hat A)$.
The following lemma shows that $A = \hat A$,
completing the proof of Theorem~\ref{MonThm v2}.
\end{proof}

\begin{lem} \label{A is hat A}
Let $A$ and $\hat A$ be elements of $\DE_{n, N}(\K)$.
Write $a^\ell$ and $\hat a^\ell$ for the coefficients of $A$ and $\hat A$,
$V^i$ and $\hat V^i$ for $V^i(A)$ and $V^i(\hat A)$, and
$\nu_{ij}$ and $\hat \nu_{ij}$ for the symplectic products
$\W_A(V^i, V^j)$ and $\W_{\hat A}(\hat V^i, \hat V^j)$, respectively.
The following statements are equivalent:

\begin{enumerate}

\item[(i)]
$A = \hat A$, i.e., $a^\ell = \hat a^\ell$ for\/ $0 \le \ell \le n$.

\smallbreak \item[(ii)]
There exists a symplectic map $\sigma:
\bigl( \cK(A), \W_A \bigr) \to
\bigl( \cK(\hat A), \W_{\hat A} \bigr)$
such that $\sigma(V^i) = \hat V^i$ for all~$i$.

\smallbreak \item[(iii)]
$\nu_{ij} = \hat \nu_{ij}$
for all~$i$ and~$j$.

\smallbreak \item[(iv)]
$V^i = \hat V^i$ for all~$i$.

\end{enumerate}
\end{lem}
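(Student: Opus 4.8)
The plan is to prove the equivalence of statements (i)–(iv) by establishing a cycle of implications, exploiting the fact that each $V^i$ is characterized by both its initial conditions (Corollary~\ref{V^i(A)}) and its symplectic products with the other solutions (Lemma~\ref{SymLem}). The implications (i)$\Rightarrow$(iv)$\Rightarrow$(iii) and (iv)$\Rightarrow$(ii) are nearly immediate, so the substance lies in closing the loop with (ii)$\Rightarrow$(iii)$\Rightarrow$(iv)$\Rightarrow$(i).

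\medbreak First I would handle the easy directions. For (i)$\Rightarrow$(iv): if $A = \hat A$, then $\cK(A) = \cK(\hat A)$, and since $V^i$ and $\hat V^i$ are both the unique solution with the initial conditions~(\ref{V^i(A) eqn}) determined by the common top coefficient $a^n = \hat a^n$, Lemma~\ref{IC lemma} gives $V^i = \hat V^i$. For (iv)$\Rightarrow$(iii): this is immediate once one observes that by Lemma~\ref{SymLem}(ii), $\nu_{ij} = \W_A(V^i, V^j) = V^i_j$ and similarly $\hat\nu_{ij} = \hat V^i_j$, so $V^i = \hat V^i$ forces $\nu_{ij} = \hat\nu_{ij}$. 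For (iv)$\Rightarrow$(ii): take $\sigma$ to be the identity on $\cK(A) = \cK(\hat A)$, which is symplectic because $\W_A = \W_{\hat A}$ follows from equality of all the symplectic products.

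\medbreak The more interesting steps are (ii)$\Rightarrow$(iii) and (iii)$\Rightarrow$(iv). For (ii)$\Rightarrow$(iii): a symplectic $\sigma$ with $\sigma(V^i) = \hat V^i$ preserves symplectic products, so $\hat\nu_{ij} = \W_{\hat A}(\hat V^i, \hat V^j) = \W_{\hat A}(\sigma V^i, \sigma V^j) = \W_A(V^i, V^j) = \nu_{ij}$. For (iii)$\Rightarrow$(iv), the key is Lemma~\ref{SymLem}(iii): the hypothesis $\nu_{ij} = \hat\nu_{ij}$ applied at $j = i+n$ gives $1/a^n_{i+n} = 1/\hat a^n_{i+n}$, hence $a^n = \hat a^n$. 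Then $V^i$ and $\hat V^i$ share the same top coefficient, so by~(\ref{V^i(A) eqn}) they have identical initial conditions; but a priori they lie in different kernels. The resolution is that $\nu_{ij} = V^i_j$ and $\hat\nu_{ij} = \hat V^i_j$, so the full hypothesis $\nu_{ij} = \hat\nu_{ij}$ says precisely $V^i_j = \hat V^i_j$ for all $j$, i.e.\ $V^i = \hat V^i$ as sequences.

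\medbreak The main obstacle, and the step requiring genuine care, is (iv)$\Rightarrow$(i): recovering equality of \emph{all} coefficients from equality of the solutions. The cleanest route is to note that the sequences $\{V^{i+1}, \ldots, V^{i+2n}\}$ form a basis of $\cK(A)$ by Corollary~\ref{V^i(A)}, and that the coefficients $a^\ell_i$ are determined by how $A$ annihilates these basis sequences. Concretely, the defining relation $(AV)_i = 0$ read off against the known solutions $V^k$ yields a linear system for the $a^\ell_i$ whose solution is unique by non-degeneracy; since the same system with the same data $V^k = \hat V^k$ determines the $\hat a^\ell_i$, we get $a^\ell = \hat a^\ell$ for all $\ell$. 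Alternatively, and more transparently, one may invoke the explicit formula~(\ref{a^{n-p}_i}): each coefficient $a^{n-p}_i$ is a fixed rational expression in the symplectic products $\omega_{jk} = \nu_{jk} = V^j_k$, so equality $V^i = \hat V^i$ for all $i$ forces equality of every $a^{n-p}_i$ with $\hat a^{n-p}_i$, and symmetry handles the lower half. I would present the argument via the basis and the annihilation relations, as it avoids depending on the particular form of~(\ref{a^{n-p}_i}) and makes the role of non-degeneracy explicit.
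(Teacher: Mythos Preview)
Your argument is correct, and for the implications among (i)--(iv) other than the return to (i) it agrees with the paper's proof essentially verbatim (the paper also uses Lemma~\ref{SymLem}(ii) to identify $\nu_{ij}$ with $V^i_j$, and Lemma~\ref{Gram}(iii) for (iii)$\Rightarrow$(ii)).

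The one substantive divergence is in (iv)$\Rightarrow$(i). You argue directly by linear algebra: for fixed $i$ the $2n$ equations $(AV^k)_i=0$ form a rank-$2n$ system in the $2n+1$ coefficients $(a^{-n}_i,\ldots,a^n_i)$, so these are determined up to scale, and the scale is pinned down by $a^n_i=-1/V^i_{i-n}$; the same system with the same data $V^k=\hat V^k$ then forces $a^\ell_i=\hat a^\ell_i$. The paper instead establishes the \emph{vector} recursion
\[
a^n_iV^{i-n}+a^{n-1}_iV^{i-n+1}+\cdots+a^{n-1}_{i+n-1}V^{i+n-1}+a^n_{i+n}V^{i+n}=0
\]
by combining $(AV^j)_i=0$ with the antisymmetry $V^j_k=-V^k_j$ of Lemma~\ref{SymLem}(ii), and then pairs this against $V^{i-r}$ under $\W_A$ to recover the inductive formula~(\ref{a^{n-p}_i ind}) with $\nu_{jk}$ in place of $\omega_{jk}$, whence the closed expression~(\ref{a^{n-p}_i}) for each $a^{n-p}_i$ purely in terms of the $\nu$'s. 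Your route is shorter and makes no appeal to~(\ref{a^{n-p}_i}); the paper's route is slightly longer but yields the additional structural fact that the solutions $V^i$ themselves satisfy the same difference recursion as their entries, and gives the coefficients explicitly as rational functions of the symplectic products. One small presentational remark: in your (iii)$\Rightarrow$(iv) the discussion of matching top coefficients and initial conditions is unnecessary once you observe $\nu_{ij}=V^i_j$ directly; you might streamline by leading with that identity.
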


\begin{proof}
It is immediate that (i) implies (ii), (iii), and~(iv),
and (ii) implies (iii).  By Lemma~\ref{Gram}(iii),
(iii) implies (ii), and~(iii) and~(iv)
are equivalent by Lemma~\ref{SymLem}(ii).
In order to prove that~(iii) and~(iv) imply~(i),
we will prove that for $0 \le p \le n$ the $a^{n-p}_i$
are given by~(\ref{a^n_i eqn}) and~(\ref{a^{n-p}_i})
with $\nu_{ij}$ replacing $\omega_{ij}$.

For $p = 0$, recall that by Lemma~\ref{SymLem}(iii),
$a^n_i = 1/\nu_{i-n, i}$.  For $p > 0$ it suffices to prove
that~(\ref{a^{n-p}_i ind}) holds with $\nu$ replacing $\omega$,
and for this it suffices to prove that~(\ref{omega solves A})
holds with $\nu$ replacing $\omega$.
This identity in turn results from applying $\W_A(V^{i-r}, \cdot)$
to~(\ref{x SLDE v2}) with $V^\bullet$ replacing $x_\bullet$,
so finally we come down to proving the vector identity
\begin{equation*}
   a^n_i V^{i-n} + a_i^{n-1} V^{i-n+1} + 
   \cdots + a^{n-1}_{i+n-1} V^{i+n-1} + a^n_{i+n} V^{i+n} = 0
\end{equation*}
for all~$i$.  Because $V^j$ is itself in $\cK(A)$, we know that
the scalar identity
\begin{equation*}
   a^n_i V^j_{i-n} + a_i^{n-1} V^j_{i-n+1} + 
   \cdots + a^{n-1}_{i+n-1} V^j_{i+n-1} + a^n_{i+n} V^j_{i+n} = 0
\end{equation*}
holds for all~$i$.  To complete the proof, recall from
Lemma~\ref{SymLem}(ii) that $V^j_k = -V^k_j$. 
\end{proof}

\section{The case $N = 2n + 3$} \label{Remarks}

In this section, let $(X_1, \ldots, X_N)$ be a generic complex $(n, N)$-\Lc\
with representatives $(x_1, \ldots, x_N)$.  Extend them to an
$N$-antiperiodic sequence and write $\omega_{ij}$ for $\omega(x_i, x_j)$.
We conclude the article with a discussion of the case $N = 2n + 3$:
we generalize the five Gauss relations~(\ref{Penta})
on $\cL_{1, 5}(\C)$ to $2n + 3$ relations on the $2n + 3$ basic
symplectic cross-ratios of $\cL_{n, 2n+3}(\C)$.

These relations are obtained by means of the
symmetric linear difference operators associated
to \Lc s in Theorem~\ref{MonThm v2}.
The computations actually consist in solving the system
of equations given by the condition that the
operators have monodromy $-\Id$.
We remark that Theorem~\ref{EquivThm}(i)
may be obtained via the same method.

There are two sequences of non-trivial cross-ratios
on $\cL_{n, 2n+3}$: the $c_i$ of~(\ref{L(n, 2n+2) CRs}),
and the $\gamma_{i, i+n+1}$ of~(\ref{gammas}),
which we will abbreviate by $\gamma_i$.
Both are $(2n + 3)$-periodic:
\begin{equation*}
   c_i := c_{i, i+1, i+n+1, i+n+2} =
   \frac{\omega_{i, i+n+1}\, \omega_{i+1, i+n+2}}
   {\omega_{i, i+n+2}\, \omega_{i+1, i+n+1}},
   \quad
   \gamma_i := c_{i-2, i, i+n, i+n+1}
   = \frac{\omega_{i-2, i+n}\, \omega_{i, i+n+1}}
   {\omega_{i-2, i+n+1}\, \omega_{i, i+n}}.
\end{equation*}

We have $\gamma_i = c_{i-1} c_{i+n}$,
so all cross-ratios on $\cL_{n, 2n+3}$
may be written in terms of the $c_i$.
We will prove that over $\C$ these $2n + 3$ cross-ratios
determine the equivalence class of their \Lc\
(we expect that over $\R$ they determine it up to opposites).
By Proposition~\ref{freely}, $\cL_{n, 2n+3}(\C)$ is $2n$-dimensional,
so the space of relations on the $c_i$ must have Krull dimension~3.
We conjecture that the $2n+3$ relations
we present generate the full space of relations.

If $\omega_{i, i+n} = 1$ for all~$i$, as in Section~\ref{Even Complex Norms},
then the representatives $x_i$ are said to be {\em normalized.\/}
We begin with a general lemma permitting us to restrict
our consideration to such representatives.

\begin{lem}
\label{nN norm}
For any complex $(n, N)$-\Lc\ $(X_1, \ldots, X_N)$
with $N/\GCD(n, N)$ odd, there exist exactly $2^{\GCD(n, N)}$
normalized choices of representatives.
\end{lem}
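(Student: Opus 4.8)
The plan is to fix one arbitrary set of representatives $x_1, \ldots, x_N$, extended $N$-antiperiodically as usual with $\omega_{ij} := \omega(x_i, x_j)$, and to count the rescalings producing normalized representatives. Any competing choice has the form $x_i \mapsto \lambda_i x_i$ for nonzero scalars $\lambda_i$; to respect the antiperiodic extension $x_{i+N} = -x_i$, the sequence $(\lambda_i)$ must be $N$-periodic. Since such a rescaling sends $\omega_{i, i+n}$ to $\lambda_i \lambda_{i+n}\, \omega_{i, i+n}$, normalized choices correspond bijectively to $N$-periodic sequences of nonzero scalars satisfying
\begin{equation*}
   \lambda_i\, \lambda_{i+n} = \omega_{i, i+n}^{-1} \qquad \text{for all } i \in \Z.
\end{equation*}
Each $\omega_{i, i+n}$ is nonzero by Lemma~\ref{ii from i}, and the right-hand side is $N$-periodic in $i$ because $\omega_{i+N, j+N} = \omega_{ij}$, so this is genuinely a cyclic system of $N$ equations indexed by $\Z/N\Z$.

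Next I would decompose the system along the orbits of the shift $i \mapsto i+n$ on $\Z/N\Z$. Setting $d := \GCD(n, N)$ and $\ell := N/d$, these orbits are the $d$ cosets of the subgroup generated by $n$, each of size $\ell$, and distinct orbits involve disjoint sets of variables $\lambda_i$. Consequently the total number of normalized choices factors as the product, over the $d$ orbits, of the number of solutions of the corresponding closed cyclic subsystem $\lambda_{j_k}\, \lambda_{j_{k+1}} = \omega_{j_k, j_k+n}^{-1}$, $k \in \Z/\ell\Z$, on an orbit $\{j_0, j_0+n, \ldots, j_0+(\ell-1)n\}$.

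The crux is the count on a single orbit, and here the hypotheses enter. Solving the first $\ell - 1$ equations recursively writes each $\lambda_{j_k}$ as $\lambda_{j_0}^{(-1)^k}$ times a fixed monomial in the nonzero scalars $\omega_{j_m, j_m+n}^{-1}$; substituting into the closing equation yields $\lambda_{j_0}^{\,1 + (-1)^{\ell-1}}$ times a nonzero constant. Because $\ell = N/\GCD(n, N)$ is odd by hypothesis, $1 + (-1)^{\ell-1} = 2$, so the closing equation becomes $\lambda_{j_0}^2 = (\text{nonzero constant})$, which over $\C$ has exactly two solutions; each then determines all remaining $\lambda_{j_k}$ uniquely through the recursion. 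Thus every orbit contributes exactly two choices, and multiplying over the $d$ orbits gives $2^d = 2^{\GCD(n, N)}$. I expect this single-orbit analysis to be the only delicate point: it is the sole place where both the oddness of $\ell$ and the algebraic closedness of $\C$ are used, and were $\ell$ even the closing equation would degenerate to $1 = (\text{constant})$, giving either no solutions or a one-parameter family rather than a finite count.
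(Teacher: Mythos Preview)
Your proof is correct and follows essentially the same approach as the paper: both reduce to solving the $N$-periodic system $\lambda_i \lambda_{i+n} = \omega_{i,i+n}^{-1}$, decompose it along the $\GCD(n,N)$ orbits of the shift $i \mapsto i+n$ on $\Z/N\Z$, and use the oddness of $\ell = N/\GCD(n,N)$ to obtain exactly two solutions per orbit. The paper presents this by first writing down an explicit solution via square roots $\chi_i$ of the $\omega_{i,i+n}$ and then observing that $\lambda_i^2$ is determined on each orbit, whereas you argue the count directly from the closing equation; the content is the same.
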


\begin{proof}
Following Section~\ref{MTii}, let $(\tilde x_1, \ldots, \tilde x_N)$
be any representatives of $(X_1, \ldots, X_N)$,
with corresponding symplectic products $\tilde \omega_{ij}$.
Fix an $N$-periodic sequence $\chi_i$
such that $\chi_i^2 = \tilde \omega_{i, i+n}$.
Mimicking~(\ref{scale factors}), set
\begin{equation*}
   \lambda_i \,:=\, \frac{\chi_i \chi_{i+2n} \chi_{i+4n} \cdots \chi_{i+(q-1)n}}
   {\chi_{i+n} \chi_{i+3n} \cdots \chi_{i+(q-2)n}} \,=\,
   \prod_{r=0}^{q-1} \chi_{i+rn}^{(-1)^r},
\end{equation*}
where $q$ denotes $N/\GCD(n, N)$.
Check that $\lambda_i \lambda_{i+n} = \tilde \omega_{i, i+n}$,
so $x_i := \tilde x_i / \lambda_i$ satisfies $\omega_{i, i+n} = 1$.

The fact that $\lambda_i^2 = \prod_{r=0}^{q-1}
\tilde \omega_{i+rn, i+ (r+1)n}^{(-1)^r}$ implies that each $q$-tuple
$(\lambda_i, \lambda_{i+n}, \ldots, \lambda_{i+(q-1)n})$
is determined up to a single choice of overall sign.
The lemma follows.
\end{proof}

Henceforth let $(X_1, \ldots, X_{2n+3})$ be a generic complex
$(n, 2n+3)$-\Lc, and fix a normalized choice of representatives $x_i$.
Recall from Section~\ref{Even Complex Norms} the
$(n+1)$-periodic sequence $a_i := \omega_{i, i+n+1}$ of
symplectic diameters of normalized $(n, 2n+2)$-configurations.
The analog here is the $(2n+3)$-periodic sequence of
{\em symplectic main diagonals,\/} defined by
the same formula as the $a_i$:
\begin{equation*}
   d_i := \omega_{i, i+n+1}.
\end{equation*}
Note that this notation is consistent with~(\ref{Penta}), and
\begin{equation*}
   c_i = d_i d_{i+1} / d_{i-n-1}, \qquad
   \gamma_i = c_{i-1} c_{i+n} = d_i d_{i+n}.
\end{equation*}

\begin{cor} \label{norm 2n+3}
\begin{enumerate}

\item[(i)]
If\/ $3 \notdivides n$, then $(X_1, \ldots, X_{2n+3})$ has two
normalized choices of representatives: $(x_i)_i$ and $(-x_i)_i$.
Both have the same $d_i$: the configuration determines its main diagonals.

\smallbreak \item[(ii)]
If\/ $3 \divides n$, then $(X_1, \ldots, X_{2n+3})$ has eight
normalized choices of representatives: $\epsilon_i x_i$, where
$\epsilon_i = \pm 1$ and depends only on $i$ modulo~3.
The corresponding main diagonals are $\epsilon_i \epsilon_{i+1} d_i$.

\end{enumerate}
\end{cor}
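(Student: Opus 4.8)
The plan is to invoke Lemma~\ref{nN norm} once the relevant gcd has been computed, and then to track how the normalized choices and their main diagonals transform under the residual sign ambiguity. First I would observe that $\GCD(n, 2n+3) = \GCD(n, 3)$, which equals $1$ if $3 \notdivides n$ and $3$ if $3 \divides n$. In both cases $N/\GCD(n, N)$ is odd: it is $2n+3$ in the first case, and writing $n = 3m$ it is $2m+1$ in the second. Hence Lemma~\ref{nN norm} applies and yields exactly $2^{\GCD(n, 3)}$ normalized choices of representatives, namely two when $3 \notdivides n$ and eight when $3 \divides n$, which already accounts for the stated counts.

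Next I would pin down the precise relation between any two normalized choices. If $(x_i)_i$ and $(x'_i)_i$ are normalized representatives of the same configuration, then $x'_i = \mu_i x_i$ for an $N$-periodic scalar sequence $(\mu_i)$, and the normalizations $\omega_{i, i+n} = 1 = \mu_i \mu_{i+n}\, \omega_{i, i+n}$ force $\mu_i \mu_{i+n} = 1$ for all~$i$. Iterating gives $\mu_{i+rn} = \mu_i^{(-1)^r}$, and evaluating at $r = N/\GCD(n, N)$—which is odd and satisfies $rn \equiv 0$ modulo~$N$—yields $\mu_i = \mu_i^{-1}$, so $\mu_i = \pm 1$ and consequently $\mu_{i+n} = \mu_i$. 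Thus $\mu$ takes values in $\{\pm 1\}$ and is constant on each orbit of the shift $i \mapsto i+n$ on $\Z/N$, of which there are $\GCD(n, N)$.

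The crux is to identify these orbits. When $3 \divides n$, the subgroup $\langle n \rangle \subseteq \Z/N$ has order $N/\GCD(n, N) = N/3$ and is contained in $\langle 3 \rangle$, which also has order $N/3$ since $3 \divides N$; hence $\langle n \rangle = \langle 3 \rangle$, and the three orbits are precisely the residue classes modulo~$3$. Therefore $\mu_i = \epsilon_i$ with $\epsilon_i$ depending only on $i$ modulo~$3$, whereas for $3 \notdivides n$ there is a single orbit and $\mu$ is a global sign, giving the two choices $(\pm x_i)_i$. Finally I would compute the effect on the main diagonals $d_i = \omega_{i, i+n+1}$: under $x_j \mapsto \mu_j x_j$ one has $d_i \mapsto \mu_i \mu_{i+n+1}\, d_i$. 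For $3 \notdivides n$ this equals $d_i$ because $\mu$ is a global sign, proving~(i); for $3 \divides n$, using $i+n+1 \equiv i+1$ modulo~$3$ one gets $\mu_{i+n+1} = \epsilon_{i+1}$ and hence $\epsilon_i \epsilon_{i+1}\, d_i$, proving~(ii). I expect the only genuine obstacle to be the subgroup identification $\langle n \rangle = \langle 3 \rangle$; the remainder is direct bookkeeping with signs and congruences.
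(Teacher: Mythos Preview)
Your proposal is correct and is precisely the argument the paper leaves implicit: the corollary is stated without proof immediately after Lemma~\ref{nN norm}, and your computation of $\GCD(n,2n+3)=\GCD(n,3)$, the identification of the orbits of $i\mapsto i+n$ on $\Z/N$ with residues modulo~$3$ when $3\divides n$, and the tracking of $d_i\mapsto\mu_i\mu_{i+n+1}d_i$ are exactly the details one is expected to supply. There is nothing to add.
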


\begin{prop}
Generic equivalence classes in $\cL_{n, 2n+3}(\C)$
with the same cross-ratios $c_i$ are equal.
\end{prop}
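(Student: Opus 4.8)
The plan is to show that two generic $(n, 2n+3)$-configurations with the same cross-ratios $c_i$ are equivalent by reducing everything to normalized representatives and then matching their symplectic products. First I would fix normalized representatives $(x_i)$ and $(\tilde x_i)$ for the two configurations, which is possible by Lemma~\ref{nN norm} since $(2n+3)/\GCD(n, 2n+3) = 2n+3$ is odd. For normalized representatives we have $\omega_{i, i+n} = \tilde\omega_{i, i+n} = 1$ for all~$i$, so by Lemma~\ref{Gram}(iii) it suffices to produce a rescaling $x_i \mapsto \lambda_i x_i$ preserving the subdiameters (forcing $\lambda_i \lambda_{i+n} = 1$) and matching the remaining nontrivial products, namely the main diagonals $d_i = \omega_{i, i+n+1}$.

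Next I would extract from the hypothesis that the two configurations share their cross-ratios $c_i$ the consequence that $c_i = d_i d_{i+1}/d_{i-n-1}$ and $\tilde c_i = \tilde d_i \tilde d_{i+1}/\tilde d_{i-n-1}$ agree for all~$i$, and similarly that $\gamma_i = d_i d_{i+n}$ agree. Since all cross-ratios are expressible in terms of the $c_i$ (as noted in the excerpt, $\gamma_i = c_{i-1}c_{i+n}$), the relations $d_i d_{i+n} = \tilde d_i \tilde d_{i+n}$ hold for every~$i$. Writing $\rho_i := \tilde d_i / d_i$, these say $\rho_i \rho_{i+n} = 1$ for all~$i$. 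Because $(2n+3)/\GCD(n, 2n+3)$ is odd, the step-$n$ cyclic orbit on $\Z/(2n+3)$ is a single cycle of odd length, so iterating $\rho_i \rho_{i+n} = 1$ around this odd cycle forces $\rho_i^2 = 1$, whence each $\rho_i = \pm 1$. This is the crux: over $\C$ one still gets only $\rho_i = \pm 1$ rather than a free sign, because the odd cycle length rigidifies the constraint.

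The main obstacle will be to rule out a genuine sign discrepancy between the $d_i$ and the $\tilde d_i$ and to absorb whatever signs survive into an admissible rescaling. I would argue that the surviving sign pattern $(\rho_i)$ must be realizable by a subdiameter-preserving rescaling $\lambda_i$ with $\lambda_i \lambda_{i+n} = 1$ and $\lambda_i \lambda_{i+1} = \rho_i$ (the latter coming from matching the main diagonals after the rescaling, since rescaling multiplies $d_i = \omega_{i, i+n+1}$ by $\lambda_i \lambda_{i+n+1} = \lambda_i \lambda_{i+1}$ using $\lambda_{i+n} = \lambda_i^{-1}$). Solving this linear system over $\{\pm 1\}$ is again controlled by the parity of the cycle: the compatibility condition is precisely that $\prod_i \rho_i$ over one period equals the correct value, which follows from the shared cross-ratios via the product identities analogous to~(\ref{epsilon c}). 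I expect the consistency check to go through automatically over $\C$ because square roots are unobstructed there, leaving only the combinatorial sign bookkeeping, which the oddness of $2n+3$ resolves.

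Once the $\lambda_i$ are constructed, the rescaled representatives satisfy $\omega_{ij} = \tilde\omega_{ij}$ for all the nontrivial index pairs $j = i+n$ and $j = i+n+1$, and all other $\omega_{ij}$ and $\tilde\omega_{ij}$ vanish by the Lagrangian condition; hence the full $\omega$-Gram matrices agree. Applying Lemma~\ref{Gram}(iii) produces the required symplectic transformation carrying one configuration to the other, establishing equivalence and completing the proof.
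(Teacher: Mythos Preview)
Your approach is essentially correct and parallel to the paper's, but there is one concrete slip that propagates: you assert $(2n+3)/\GCD(n,2n+3)=2n+3$, equivalently $\GCD(n,2n+3)=1$, and hence that the step-$n$ action on $\Z/(2n+3)$ has a single orbit. In fact $\GCD(n,2n+3)=\GCD(n,3)$, so when $3\mid n$ there are three orbits, each of odd length $(2n+3)/3$. This does not break your deduction that $\rho_i^2=1$ (odd orbit length suffices), but it does mean the normalization-preserving rescalings form a larger group: by Corollary~\ref{norm 2n+3}, when $3\mid n$ the allowed $\lambda_i$ are any signs depending only on $i\bmod 3$, and the resulting freedom in the diagonals is $d_i\mapsto\epsilon_i\epsilon_{i+1}d_i$. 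You therefore need (i) that $\rho_i$ depends only on $i\bmod 3$, which follows from $\rho_i=\rho_{i+n}$ and $\GCD(n,2n+3)=3$; and (ii) that $\rho_0\rho_1\rho_2=1$, which follows from $\prod_i c_i=\prod_i d_i$ (the correct analogue of~(\ref{epsilon c}) here). With these, your sign-absorption step goes through. A minor related slip: from $\lambda_j\lambda_{j+n}=1$ one gets $\lambda_i\lambda_{i+n+1}=\lambda_i\lambda_{i+1}^{-1}$, not $\lambda_i\lambda_{i+1}$; they coincide only because $\lambda_i=\pm1$, which you have not yet established at that point.

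The paper's proof starts identically (normalize, reduce to matching the $d_i$) but is more direct: writing $n=3m+s$ with $s\in\{-1,0,1\}$, it uses the telescoping identity
\[
\frac{\gamma_i\gamma_{i+2n}\cdots\gamma_{i+2mn}}{\gamma_{i+n}\gamma_{i+3n}\cdots\gamma_{i+(2m-1)n}}=d_i\,d_{i+(2m+1)n}=d_i\,d_{i+s}
\]
together with $c_{i-1}\gamma_{i+1}=d_{i-1}d_id_{i+1}$ to solve explicitly for $d_i$ as a rational function of the cross-ratios when $3\nmid n$, and for $d_i^2$ when $3\mid n$; in the latter case it then invokes Corollary~\ref{norm 2n+3}(ii) to absorb the residual sign. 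So the two arguments converge; the paper's buys an explicit formula for the $d_i$, while yours is a cleaner existence argument once the $3\mid n$ case is handled properly.
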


\begin{proof}
Let $x_i$ and $\tilde x_i$ be normalized representatives of two
\Lc s having the same cross-ratios: $\tilde c_i = c_i$.
It suffices to show that the $\tilde x_i$ may be chosen so that
the two sets of main diagonals are the same, i.e., $\tilde d_i = d_i$,
as then $\tilde \omega_{ij} = \omega_{ij}$ for all~$i$ and~$j$.
Observe that
\begin{equation*}
   c_{i-1} \gamma_{i+1} = d_{i-1} d_i d_{i+1},
   \mbox{\rm\ and for any $r$,\ \ }
   \frac{\gamma_i \gamma_{i+2n} \cdots \gamma_{i+2rn}}
   {\gamma_{i+n} \gamma_{i+3n} \cdots \gamma_{i+(2r-1)n}}
   = d_i d_{i+(2r+1)n}.
\end{equation*}

Write $n$ in the form $3m + s$, where $s \in \{ -1, 0, 1 \}$.
Take $r = m$ above and apply $(2n+3)$-periodicity to see
that $d_i d_{i + s}$ is determined by the cross-ratios.
If $s = \pm 1$, dividing $c_{i-1} \gamma_{i+1}$ by this
gives $d_{i - s}$ as a function of the cross-ratios.
Thus for $3 \notdivides m$, the cross-ratios determine
the main diagonals.

If $3 \divides n$, i.e., $s = 0$, taking $r = m$ gives $d_i^2$,
and so the cross-ratios determine the main diagonals up to sign:
$\tilde d_i = \delta_i d_i$ for some $\delta_i = \pm 1$.
Using $\gamma_i = d_i d_{i+n}$,
$\tilde\gamma_i = \gamma_i$,
and $\GCD(n, 2n+3) = 3$,
we find that $\delta_i$ depends only on $i$ modulo~3.
Applying $c_{i-1} \gamma_{i+1} = d_{i-1} d_i d_{i+1}$,
we obtain $\delta_{i-1} \delta_i \delta_{i+1} = 1$.
Therefore by Corollary~\ref{norm 2n+3} it is possible
to modify the $\tilde x_i$ so that $\tilde d_i = d_i$.
\end{proof}

Recall now the difference operator $A$ constructed from
the representatives $x_i$ in~(\ref{x SLDE v2}).
In the normalized case the formula~(\ref{a^{n-p}_i})
for its coefficients simplifies, as the denominators are all~$1$:
\begin{equation*}
   a^{n-p}_i \,=\, \sum_{m=0}^{p-1}\,
   \sum_{0 < p_1 < \cdots < p_m < p} -(-1)^m\,
   \omega_{i-n, i+p_1}\, \omega_{i-n+p_1, i+p_2}
   \cdots \omega_{i-n+p_{m-1}, i+p_m}\, \omega_{i-n+p_m, i+p},
\end{equation*}
the summand at $m=0$ being $-\omega_{i-n, i+p}$.
Observe that
\[
  \omega_{i-n+p_{\ell-1}, i+p_\ell} = 
  \begin{cases} 
   d_{i-n+p_\ell -1} & \text{if } p_\ell - p_{\ell-1} = 1, \\
   d_{i+p_\ell} & \text{if } p_\ell - p_{\ell-1} = 2, \\
   1 & \text{if } p_\ell - p_{\ell-1} = 3, \\
   0 & \text{if } p_\ell - p_{\ell-1} \ge 4.
  \end{cases}
\]
We write $a^{n-p}_i$ explicitly for $0 \le p \le 3$:
\begin{align*}
   & a^n_i = 1, \qquad
   a^{n-1}_i = -d_{i-n}, \qquad
   a^{n-2}_i = d_{i-n} d_{i-n+1} - d_{i+2}, \\[4pt]
   & a^{n-3}_i = -d_{i-n} d_{i-n+1} d_{i-n+2}
   + d_{i-n} d_{i+3} + d_{i-n+2} d_{i+2} - 1.
\end{align*}

\begin{prop}
The $2n + 3$ symplectic main diagonals $d_i$ of
the normalized representatives $x_i$ of
$(X_1, \ldots, X_{2n+3})$ satisfy the following
$2n + 3$ polynomial relations
(for $n=1$, take $a^2_i$ to be~$0$):
\begin{equation} \label{2n+3 relation}
   0 = a^2_i + d_{i+n+1} a^1_i + d_i a^0_i + a^1_{i+1}.
\end{equation}
\end{prop}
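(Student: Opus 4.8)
The plan is to read each relation~(\ref{2n+3 relation}) as a single instance of the difference equation $AV = 0$ for the symmetric operator $A$ attached to the configuration in Theorem~\ref{MonThm v2}. Recall from Lemma~\ref{IndLem} that the normalized representatives $x_i$ determine a non-degenerate $(2n+3)$-periodic symmetric operator $A \in \DE_{n, 2n+3}(\K)$, and from Lemma~\ref{V is omega}(ii) that for every $m$ the sequence $(\omega_{mj})_{j \in \Z}$ is exactly the solution $V^m(A) \in \cK(A)$. Consequently $(AV^m)_i = 0$ for all $m$ and all $i$, and the entire argument consists in choosing $m$ and $i$ so that this scalar identity collapses onto~(\ref{2n+3 relation}).

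First I would fix $i$ and apply~(\ref{NashGenEq}) at index $i$ to the solution $V^{m}$ with $m := i-n-2$; in~(\ref{NashGenEq}) the coefficient of $V_{i-\ell}$ is $a^\ell_i$, that of $V_i$ is $a^0_i$, and that of $V_{i+\ell}$ is $a^\ell_{i+\ell}$. The entries of this solution are the products $\omega_{i-n-2,\, j}$, so everything reduces to locating their non-zero values. In the normalized $(n, 2n+3)$ setting the only non-zero symplectic products are those of $(2n+3)$-cyclic distance $n$, which equal $1$, and those of distance $n+1$, which equal a main diagonal $d_j$; smaller distances vanish by the Lagrangian condition, and no larger distance occurs. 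As $j$ runs over the window $\{i-n, \ldots, i+n\}$ of the equation, the difference $j-(i-n-2)$ ranges over $\{2, \ldots, 2n+2\}$, whose cyclic distance reaches $\{n, n+1\}$ only for $j \in \{i-2,\, i-1,\, i,\, i+1\}$.

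The core computation is to evaluate these four surviving products, tracking the antiperiodicity $x_{j \pm (2n+3)} = -x_j$ and the $(2n+3)$-periodicity of $(d_j)$:
\begin{equation*}
   \omega_{i-n-2,\, i-2} = 1, \qquad
   \omega_{i-n-2,\, i-1} = d_{i+n+1}, \qquad
   \omega_{i-n-2,\, i} = d_i, \qquad
   \omega_{i-n-2,\, i+1} = 1.
\end{equation*}
Substituting into $(AV^{i-n-2})_i = 0$ leaves exactly $a^2_i + d_{i+n+1}\, a^1_i + d_i\, a^0_i + a^1_{i+1} = 0$, which is~(\ref{2n+3 relation}). I expect this bookkeeping to be the main obstacle: one must check, via the cyclic distances and the antiperiodic sign rule, that the distance-$(n+2)$ and distance-$(n+3)$ products fold back with the correct signs to $d_i$ and $1$ respectively, and that every remaining term in the window genuinely vanishes, so that no spurious contribution survives.

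Finally I would record the degenerate case $n=1$, where the window $\{i-1, i+1\}$ no longer contains $j = i-2$, so the coefficient $a^2_i$ has no counterpart and is set to $0$ as in the statement; there the identity becomes $d_{i-1}d_i = d_{i+2}+1$, recovering the Gauss relations~(\ref{Penta}). Since each of the $2n+3$ choices of $i$ produces one such identity and the coefficients $a^0_i, a^1_i, a^2_i$ are, by~(\ref{a^{n-p}_i}) specialized to the normalized case, polynomials in the $d_j$, this yields the asserted $2n+3$ polynomial relations among the main diagonals.
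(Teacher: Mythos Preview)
Your proof is correct and follows essentially the same approach as the paper: both arguments evaluate the linear difference equation $AV=0$ on a specific solution $V^m$ so that only four terms survive. The paper phrases this as ``take $r=-(n+1)$ in~(\ref{omega solves A}),'' which amounts to choosing $m=i+n+1$; your choice $m=i-n-2$ differs from this by $N=2n+3$, so by antiperiodicity the two solutions are negatives of one another and the resulting scalar identities coincide. Your explicit bookkeeping of the four surviving products, including the antiperiodic folding at distances $n+2$ and $n+3$, is accurate and simply unpacks what the paper's one-line citation leaves implicit.
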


\begin{proof}
Simply take $r = -(n+1)$ in~(\ref{omega solves A}).
\end{proof}

\medbreak \noindent {\bf Legendrian pentagons in $\CP^1$.}
This is $\cL_{1, 5}$, the Gaussian case
discussed in Section~\ref{Gauss relations}.
Here (\ref{2n+3 relation}) reduces to~(\ref{Penta}).
Using $d_i = c_{i-2} c_{i+1} / c_{i+2}$, the relations
may be stated in terms of the $c_i$:
\begin{equation*}
   \frac{1}{c_i} + \frac{1}{c_{i-1} c_{i+1}} = 1.
\end{equation*}

\medbreak \noindent {\bf Legendrian heptagons in $\CP^3$.}
For $\cL_{2, 7}$, (\ref{2n+3 relation}) reads
\begin{equation*}
   d_{i-1} d_i d_{i+1} - d_{i-3} d_{i-1} - d_{i+1} d_{i+3} - d_i + 1 = 0.
\end{equation*}
Using $d_i = c_{i-1} c_i c_{i+3} / c_{i-2} c_{i+1}$, this becomes
\begin{equation*}
   \frac{1}{c_{i-3}} + \frac{1}{c_{i+3}} + \frac{1}{c_{i-2} c_{i+2}}
   - \frac{1}{c_{i-3} c_i c_{i+3}} = 1.
\end{equation*}

\medbreak \noindent {\bf Legendrian nonagons in $\CP^5$.}
For $\cL_{3, 9}$, (\ref{2n+3 relation}) yields
\begin{align*}
   d_{i-4} d_{i-3} d_{i+3} d_{i+4}
   - d_{i-4} d_{i-3} d_{i-1} - d_{i-3} d_i d_{i+3}
   - d_{i+1} d_{i+3} d_{i+4} & \\[4pt]
   - d_{i-4} d_{i+4} + d_{i-1} d_{i+1}
   + d_{i-3} + d_i + d_{i+3} &= 0.
\end{align*}
Using $c_i = d_i d_{i+1} / d_{i-4}$, $c_{i-1} c_{i+3} = \gamma_i = d_i d_{i+3}$,
and $c_{i-1} \gamma_{i+1} = d_{i-1} d_i d_{i+1}$, this may be rewritten as
\begin{equation*}
   \frac{1}{c_{i-1}} + \frac{1}{c_i} + \frac{1}{c_{i+1}}
   + \frac{1}{c_{i-1} c_{i+1}} + \frac{1}{c_{i-2} c_{i+2}}
   - \frac{1}{c_{i-4} c_i c_{i+1}} - \frac{1}{c_{i-2} c_i c_{i+2}}
   - \frac{1}{c_{i-1} c_i c_{i+4}} = 1.
\end{equation*}

We close with a few general remarks.
Note that for $3 \notdivides n$, (\ref{2n+3 relation}) can always
be written as a rational relation on the $c_i$, because the $d_i$
are rational functions of the $c_i$.  In light of the situation
for $\cL_{3,9}$, we expect that this is in fact true for all~$n$.
Also, although we have worked only over $\C$ in this section,
it should be easy to show that the relations we have given
on the $c_i$ hold also over $\R$.

Finally, let us reiterate our conjecture regarding~(\ref{2n+3 relation}).
Because $\cL_{n, 2n+3}$ is $2n$-dimensional and the cross-ratios
$(c_1, \ldots, c_{2n+3})$ form a coordinate ring on it,
the space of relations on the $c_i$ must be of Krull dimension~3.
We conjecture that the $2n + 3$ relations~(\ref{2n+3 relation})
generate the full relation space.

\bigskip \noindent
{\bf Acknowledgements}.
We are grateful to Sophie Morier-Genoud, Sergei Tabachnikov,
and Richard Schwartz for enlightening discussions.
C.H.C.\ was partially supported by Simons
Collaboration Grants 207736 and 519533.


\begin{thebibliography}{99}

\bibitem{And} 
D.~Andr\'e, 
{\it Terme g\'en\'eral d'une s\'erie quelconque
d\'etermin\'ee \`a la fa\c{c}on des s\'eries r\'ecurrentes,\/}
Ann.\ Sci.\ \'Ecole Norm.\ Sup.~(2) {\bf 7} (1878), 375--408.

\bibitem{Arn} 
V.~I.~Arnold,
{\it The Sturm theorems and symplectic geometry,\/}
Func.\ Anal.\ Appl.\ {\bf 19} (1985), 251--259.

\bibitem{CoOv}
C.~H.~Conley, V.~Ovsienko,
{\it Rotundus: triangulations, Chebyshev polynomials, and Pfaffians,\/}
Math. Intelligencer {\bf 40} (2018), no.~3, 45--50.

\bibitem{Cox} 
H.~S.~M.~Coxeter,
{\it Frieze patterns,\/}  Acta Arith.\ {\bf 18}  (1971), 297--310.

\bibitem{FP} 
E.~Falbel, I.~Platis, 
{\it The $\mathrm{PU}(2,1)$ configuration space of four points in $S^3$ 
and the cross-ratio variety,\/}
Math.\ Ann.\ {\bf 340} (2008), no.~4, 935--962.

\bibitem{Glo}
R.~Felipe, G.~Mari-Beffa,
{\it The pentagram map on Grassmannians,\/}
to appear in Ann.\ Inst.\ Fourier, {\tt arXiv:1507.04765.}

\bibitem{Gau}
C.~F.~Gauss, 
{\it Pentagramma Mirificum,\/}
Werke, Bd.~III, 481--490; Bd.~VIII, 106--111.

\bibitem{GP}
M.~Glick, P.~Pylyavskyy,
{\it $Y$-meshes and generalized pentagram maps,\/}
Proc.\ London Math.\ Soc.\ {\bf 112} (2016), 753--797.

\bibitem{KS}
B.~Khesin, F.~Soloviev,
{\it The geometry of dented pentagram maps,\/}
J.\ Europ.\ Math.\ Soc.\ {\bf 18} (2016), 147--179.

\bibitem{Kri}
I.~Krichever, 
{\it Commuting difference operators and the combinatorial Gale transform,\/} 
Funct.\ Anal.\ Appl.\ {\bf 49} (2015), no.~3, 175--188.

\bibitem{Lab}
F.~Labourie,
{\it What is... a cross-ratio?\/}
Notices AMS {\bf 55} (2008), 1234--1235.

\bibitem{Sop} 
S.~Morier-Genoud,  
{\it Symplectic frieze patterns,\/}
{\tt arXiv:1803.06001.}

\bibitem{SVRS} 
S.~Morier-Genoud, V.~Ovsienko, R.~Schwartz, S.~Tabachnikov, 
{\it Linear difference equations, frieze patterns and combinatorial Gale transform,\/} 
Forum Math., Sigma {\bf 2} (2014), e22 (45 pages).

\bibitem{MGOT} 
S.~Morier-Genoud, V.~Ovsienko, S.~Tabachnikov,
{\it 2-Frieze patterns and the cluster structure of the space of polygons,\/}
Ann.\ Inst.\ Fourier {\bf 62} (2012), 937--987.

\bibitem{OS} 
A.~Onishchik, R.~Sulanke, 
Projective and Cayley-Klein geometries,
Springer Monographs in Mathematics,
Springer-Verlag, Berlin, 2006.
 
\bibitem{Ovs} 
V.~Ovsienko,
{\it Self-dual differential operators and curves on Lagrangian
Grassmannian that are subordinate to a loop,\/}
Math.\ Notes {\bf 47} (1990), no.~3-4, 270--275.

\bibitem{Ovs1}
V.~Ovsienko,
{\it Partitions of unity in $\SL(2,\Z)$, negative continued fractions,
and dissections of polygons,\/} 
Res.\ Math.\ Sci.\ {\bf 5} (2018), no.~2, Paper No.~21, 25~pp.

\bibitem{OST} 
V.~Ovsienko, R.~Schwartz, S.~Tabachnikov, 
{\it The pentagram map: A discrete integrable system,\/}
Comm.\ Math.\ Phys.\ {\bf 299} (2010), no.~2, 409--446.

\bibitem{OST1} 
V.~Ovsienko, R.~Schwartz, S.~Tabachnikov, 
{\it Liouville-Arnold Integrability of the Pentagram Map on Closed Polygons,\/}
Duke Math.\ J.\ {\bf 162} (2013), 2149--2196.

\bibitem{OT} 
V.~Ovsienko, S.~Tabachnikov,
Projective differential geometry old and new:
From the Schwarzian derivative to the cohomology of diffeomorphism groups,
Cambridge University Press, Cambridge, 2005.

\bibitem{Sol}
F.~Soloviev,
{\it Integrability of the Pentagram Map,\/}
Duke Math.\ J.\ {\bf 162} (2013), 2815--2853.

\bibitem{Yag}
I.~M.~Yaglom,
{\it On linear subspaces of symplectic space,\/}
Trudy Sem.\ Vektor.\ Tenzor.\ Analizu {\bf 9} (1952), 309--318.

\end{thebibliography}
\end{document}